\newtheorem{theorem}{Theorem}[section]
\newtheorem{lemma}[theorem]{Lemma}
\theoremstyle{definition}
\newtheorem{definition}[theorem]{Definition}
\newtheorem{remark}{Remark}
\title[On a pore-scale stationary diffusion equation] 
{On a pore-scale stationary diffusion equation: scaling effects and correctors for the homogenization limit}
\author[V. A. Khoa and T. K. Thoa Thieu and E. R. Ijioma]{}
\subjclass{Primary: 35B27 , 35C20 , 35D30 , 65M15.}
\keywords{Pore-scale model; nonlinear elliptic equations; perforated domains; linearization; asymptotic analysis; corrector estimates.}
\email{vakhoa.hcmus@gmail.com}
\email{thikimthoa.thieu@gssi.it}
\email{e.r.ijioma@gmail.com}
\thanks{The work of V. A. K was partly supported by the Research Foundation-Flanders (FWO) under the project named ``Approximations for forward and inverse reaction-diffusion problems related to cancer models''. This work was also supported by US Army Research Laboratory and US Army Research Office grant W911NF-19-1-0044.}
\thanks{$^*$Corresponding author: Vo Anh Khoa.}
\begin{document}
	\maketitle
	
	\centerline{\scshape Vo Anh Khoa$^*$}
	\medskip
	{\footnotesize
		\centerline{Faculty of Sciences, Hasselt University, Campus Diepenbeek, BE3590 Diepenbeek, Belgium}
		\centerline{Department of Mathematics and Statistics, University of North Carolina at Charlotte, }
		\centerline{Charlotte, North Carolina 28223, USA}
	} 
	
	\medskip
	
	\centerline{\scshape  Thi Kim Thoa Thieu}
	\medskip
	{\footnotesize
		\centerline{Department of Mathematics, Gran Sasso Science Institute, Viale Francesco Crispi 7, }
		\centerline{L'Aquila 67100, Italy}
		\centerline{Department of Mathematics and Computer Science, Karlstad University,}
		\centerline{Universitetsgatan 2, Karlstad, Sweden}
	}
	\medskip  
	\centerline{\scshape  Ekeoma Rowland Ijioma}
	\medskip
	{\footnotesize
		\centerline{Meiji Institute for Advanced Study of Mathematical Sciences,}
		\centerline{4-21-1 Nakano, Nakano-ku, Tokyo, Japan}
	}
	
	\bigskip
	
	\centerline{(Communicated by the associate editor name)}

	\begin{abstract}
		In this paper, we consider a microscopic semilinear elliptic equation posed in periodically perforated domains and associated with the Fourier-type condition on internal micro-surfaces. The first contribution of this work is the construction of a reliable linearization scheme that allows us, by a suitable choice of scaling arguments and stabilization constants, to prove the weak solvability of the microscopic model. Asymptotic behaviors of the microscopic solution with respect to the microscale parameter are thoroughly investigated in the second theme, based upon several cases of scaling. In particular, the variable scaling illuminates the trivial and non-trivial limits at the macroscale, confirmed by certain rates of convergence.  
		Relying on classical results for homogenization of multiscale elliptic problems, we design a modified two-scale asymptotic expansion to derive the corresponding macroscopic equation, when the scaling choices are compatible. Moreover, we prove the high-order corrector estimates for the homogenization limit in the energy space $H^1$, using a large amount of energy-like estimates. A numerical example is provided to corroborate the asymptotic analysis.
	\end{abstract}
	
	\section{Introduction}
	\subsection{Background and statement of the problem}
	
	We assume that a porous medium $\Omega^{\varepsilon} \subset \mathbb{R}^d$ ($d\in\mathbb{N}^{*}$) possesses a uniformly
	periodic microstructure whose length scale is defined by a small parameter
	(microscale parameter) $0<\varepsilon\ll1$. In practice, $\varepsilon$ is defined as the ratio of the characteristic length of the microstructure to a characteristic macroscopic length. In this paper, the porous medium of interest
	contains a large amount of very small holes and thus can be viewed
	as a perforated domain. Largely inspired by \cite{KMK15}, our work aims at understanding the spread of concentration of colloidal particles $u_{\varepsilon}:\Omega^{\varepsilon}\to\mathbb{R}$
	in a saturated porous tissue $\Omega^{\varepsilon}$ with a cubic 
	cell $Y=\left[0,1\right]^{d}$. This kind of tissues can be illustrated in Figure \ref{fig:1} as a schematic representation of a natural soil. Since the constitutive properties
	of the microstructure repeat periodically, the molecular diffusion
	coefficient $\mathbf{A}:Y\to\mathbb{R}^{d\times d}$ is assumed to
	vary in the cell or in a material point $x\in\Omega^{\varepsilon}$. Accordingly, it can be expressed as $\mathbf{A}\left(x/\varepsilon\right)$. We consider the presence of a volume reaction $\mathcal{R}:\mathbb{R}\to\mathbb{R}$
	combined with an internal source $f:\Omega^{\varepsilon}\to\mathbb{R}$.
	Moreover, we also consider a chemical reaction $\mathcal{S}:\mathbb{R}\to\mathbb{R}$
	for the immobile species along with deposition coefficients at the
	internal boundaries, denoted by $\Gamma^{\varepsilon}$. On the other
	hand, the colloidal species stay constant at the exterior boundary,
	denoted by $\Gamma^{\text{ext}}$. Mathematically, the governing equations describing this process is given by
	\[
	\left(P_{\varepsilon}\right):\;\begin{cases}
	\nabla\cdot\left(-\mathbf{A}\left(x/\varepsilon\right)\nabla u_{\varepsilon}\right)+\varepsilon^{\alpha}\mathcal{R}\left(u_{\varepsilon}\right)=f(x) & \text{in}\;\Omega^{\varepsilon},\\
	-\mathbf{A}\left(x/\varepsilon\right)\nabla u_{\varepsilon}\cdot\text{n}=\varepsilon^{\beta}\mathcal{S}\left(u_{\varepsilon}\right) & \text{across}\;\Gamma^{\varepsilon},\\
	u_{\varepsilon}=0 & \text{across}\;\Gamma^{\text{ext}}.
	\end{cases}
	\]\label{pt_P}
	
	\begin{figure}[htbp]
		\centering
		\includegraphics[scale = 0.3]{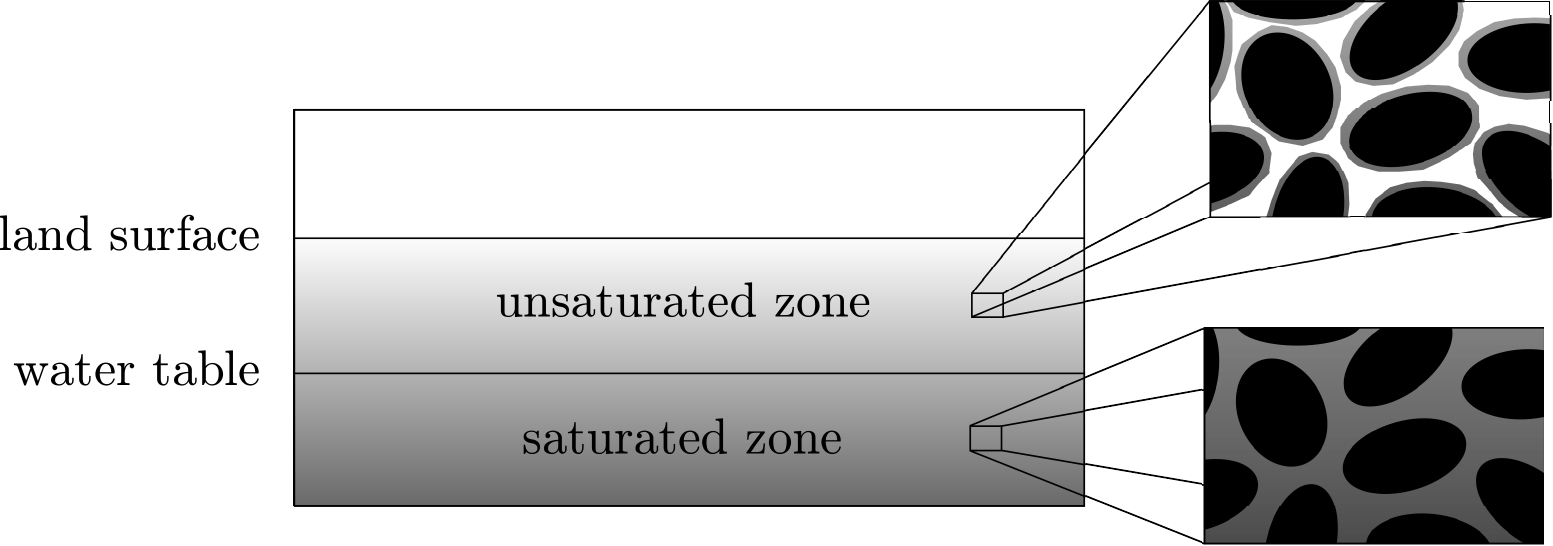}
		\caption{\small A schematic representation of a natural soil. The figure is followed from \cite{ray2013thesis}.}
		\label{fig:1}
	\end{figure}
	
	Typically, the prototypical problem $({P}_{\varepsilon})$ forms the standard model for diffusion, aggregation and surface deposition of a concentration in a porous and highly
	heterogeneous medium; cf. \cite{HJ91,KAM14,KMK15}. Starting from the heat conduction problem in composite materials with inclusions (cf. \cite{CP99}), much of the current literature on the analysis of this elliptic model pays meticulous attention to different contexts involving rigorous mathematical treatments, physical modelling (e.g., \cite{DHS16,SPPK12,Gaudiello2018}) and numerical standpoints (e.g., \cite{Sarkis}). Especially, understanding the asymptotic behaviors of solution of $({P}_{\varepsilon})$ is essential in the studies of the classical spectral problems as investigated in, e.g., \cite{CM02,Mel95}.
	
	\subsection{Main goals}
	
	It is well known that due to
	the fast oscillation in the diffusion coefficient $\mathbf{A}\left(x/\varepsilon\right)$, the number of mesh-points at any discretization level is of the order
	$\varepsilon^{-d}$, which consequently reveals a huge complexity
	of computations as
	$\varepsilon\searrow 0^{+}$. Therefore, when dealing with such multiscale problems,
	one usually targets the upscaled models where the oscillation is no longer involved, using different techniques of asymptotic analysis with respect to $\varepsilon$. In this work, our main finding is to point out the trivial and non-trivial macroscopic models, depending on every single case of $\alpha$ and $\beta$. Note that the major mathematical challenge we meet here is that $\alpha, \beta$ are real constants. We mention some physical implications of these parameters, although these variable scalings stem from our mathematical interest. The case $\alpha < 0$ and $\beta >0$ implies that the volume reaction is large, while the surface reaction of the concentration is slow. Thus, $u_{\varepsilon}$ can be expected to be slowly changing on $\Gamma^{\varepsilon}$. When $\alpha < 0$ and $\beta >0$, we have a dominant surface reaction and the volume reaction is
	negligible. Meanwhile, on $\Gamma^{\varepsilon}$, this means that  $u_{\varepsilon}$ is rapidly changing. As to the literature on this topic, the reader can be referred to \cite{RMK12,ray2013thesis,Khoa2019,Frank2011,Ray2012}, where the variable scaling has been considered earlier in complex diffusion problems of charged colloidal particles. Besides, the case $\alpha > 0$ can be related to the
	context of low-cost control problems on perforated domains in \cite{Muthukumar2009}. 
	
	The variable scaling not only requires a careful adaptation of classical homogenization results for elliptic problems, but also needs a particular investigation into the macroscopic problems obtained after the homogenization limit. When the limit function is non-trivial, we particularly design a new asymptotic expansion, which is needed by the presence of the scaling parameters. Aside from the derivation of the macroscopic problem, we delve into having the high-order corrector estimates, driven by a large amount of energy estimates. In the same manner, rates of convergence to the trivial limit function are under scrutiny. As the inception stage of this asymptotic analysis, we only focus on the speed of convergence when $\varepsilon\searrow 0^{+}$, while the regularity assumptions may not be minimal.
	
	The high-order corrector estimates we prove in this paper are involving the presence of the scaling parameters. This is the extended follow-up result of our earlier works \cite{KM16,KM16-1,Khoa17}, where we wish to estimate the differences between micro--macro concentrations and micro--macro concentration gradients in the energy space of perforated domains. It is also in the same line with the theoretical findings in \cite{Kim2018,CP99-1,Armstrong2016,Allaire1999,Onofrei2007,Onofrei2012,Griso2004}. Our preceding works show that the macroscopic problem can be self-linear, albeit the  semilinear microscopic problem. We find that this is caused by the scaled structure of the involved nonlinear reaction, which sometimes leads to the self-iterative auxiliary problems. However, there are somehow the cases that the auxiliary problems remain semilinear and thus, the fixed-point homogenization argument is required.
	
	Another novelty we present here is the exploration of a linearization scheme for $({P}_{\varepsilon})$ under a scaled Hilbert space. This scheme not only proves the weak solvability of $({P}_{\varepsilon})$, but also provides an insight to expect the asymptotic analysis, due to the \emph{a priori} estimates we obtain after the iteration limit. As far as the linearization-based algorithm is concerned, it has been profoundly developed for a long time in numerical methods for nonlinear PDEs. It is worth mentioning that the J\"ager--Ka\v cur scheme (see, e.g., \cite{Kacur99}) was investigated as the very first contribution in this branch. It plays a vital role in solving some classes of one-dimensional parabolic problems, but it is not really effective in high dimensions. Using the same idea, Long et al. \cite{LDD02} rigorously proved the local existence and uniqueness of a weak solution of a Kirchoff--Carrier wave equation in one-dimension.  We also recall the linearization by the monotonicity of iterations, for example, introduced in the monograph \cite{Pao93} involving the concepts of \emph{sub}- and \emph{super-solution}. However, its drawback comes from the way the initial loop is chosen, which must be far away from the true solution, whilst in general it can be taken by the already known initial or boundary information.
	
	
	
	\subsection{Outline}
	Our paper is structured as follows. In Section \ref{sec:2}, we provide the essential notation and working assumptions on data used in the analysis. In Section \ref{sec:3}, we design a linearization scheme to prove the weak solvability of the microscopic model, where the main result in this part is stated in Theorem \ref{mainthm-2}. In Section \ref{sec:4}, we design several two-scale asymptotic expansions, corresponding to some particular microscopic models contained in $(P_{\varepsilon})$. Accordingly, we obtain distinctive convergence results. We conclude this paper by some numerical discussions included in Section \ref{sec:4.3}.
	

	\section{Preliminaries}\label{sec:2}
	\subsection{Geometrical description of a perforated medium}
	
	Let $\Omega$ be a bounded, open and connected domain in $\mathbb{R}^d$ with a Lipschitz boundary. Typically, we can consider it as a reservoir in three dimensions. Now, let $Y$ be the unit cell defined by
	\[
	Y:=\left\{ \sum_{i=1}^{d}\lambda_{i}\vec{e}_{i}:0<\lambda_{i}<1\right\} ,
	\]
	where $\vec{e}_{i}$ denotes the $i$th unit vector in $\mathbb{R}^{d}$. In addition, we assume that this cell is made up of two open sets: $Y_l$ -- the liquid part and $Y_s$ -- the solid part which is impermeable to solute concentrations satisfy $\bar{Y}_{l}\cup\bar{Y}_{s}=\bar{Y}$ and $Y_{l}\cap Y_{s}=\emptyset,\bar{Y}_{l}\cap\bar{Y}_{s}=\Gamma$ possessing a non-zero $\left(d-1\right)$-dimensional measure. On the other hand,  suppose that the solid part $Y_s$ stays totally inside in the cell $Y$, i.e. it does not intersect the cell's boundary $\partial Y$. Consequently, the liquid part $Y_l$ is connected.
	
	Let $Z\subset\mathbb{R}^{d}$ be a hypercube. For $X\subset Z$ we
	denote by $X^{k}$ the shifted subset
	\[
	X^{k}:=X+\sum_{i=1}^{d}k_{i}\vec{e}_{i},
	\]
	where $k=\left(k_{1},...,k_{d}\right)\in\mathbb{Z}^{d}$ is a vector
	of indices.
	
	We scale this reservoir by a parameter $\varepsilon>0$ which represents the ratio of the cell size to the size of the whole reservoir. Often, this scale factor is small. We further assume that $\Omega$ is completely covered by a regular mesh consisting of three $\varepsilon$-scaled and shifted cells: the scaled liquid, solid parts and boundary. More precisely, the solid part is defined as the union of the cell regions $\varepsilon Y_{s}^{k}$, i.e.
	\[
	\Omega_{0}^{\varepsilon}:=\bigcup_{k\in\mathbb{Z}^{d}}\varepsilon Y_{s}^{k},
	\]
	while the liquid part is given by
	\[
	\Omega^{\varepsilon}:=\bigcup_{k\in\mathbb{Z}^{d}}\varepsilon Y_{l}^{k},
	\]
	and we denote the micro-surface by $\Gamma^{\varepsilon}:=\partial\Omega_{0}^{\varepsilon}$.
	
	Note that we now assume $\partial\Omega \equiv \Gamma^{\text{ext}}$ and our perforated domain $\Omega^{\varepsilon}$ is bounded, connected and possesses $C^{2}$-internal boundary.  We also denote throughout this paper $\mbox{n}:=\left(n_{1},...,n_{d}\right)$ as the unit outward normal vector on the boundary $\Gamma^{\varepsilon}$. In Figure \ref{fig:2}, we show an illustration of scales from a soil structure and the perforated domain with its unit cell.  The representation of the periodic geometries is in line with \cite{KM16, RMK12} and references therein.
	
	\begin{figure}
		\begin{centering}
			\includegraphics[scale = 0.3]{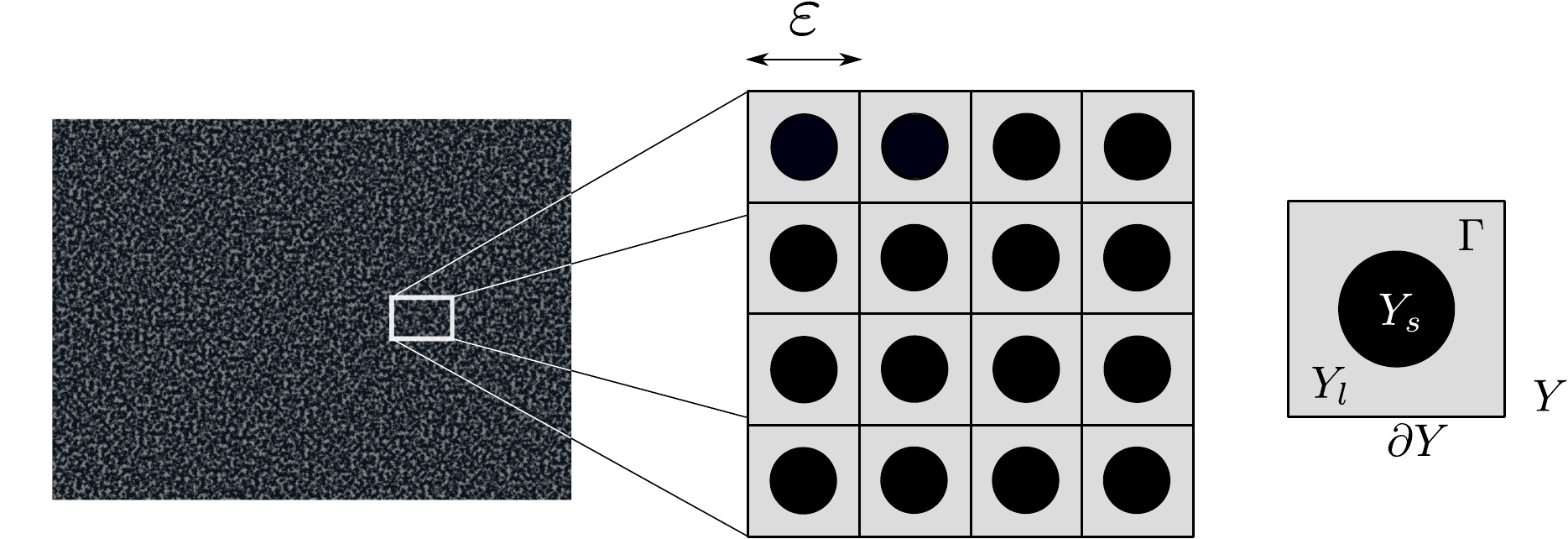}
			\par\end{centering}
		\caption{\small A schematic representation of the scaling procedure within a natural soil and the corresponding sample periodically perforated domain with its unit cell.}
		\label{fig:2}
	\end{figure}
	\subsection{Notation and assumptions on data}
	
	We denote by $x\in\Omega^{\varepsilon}$ the macroscopic variable and by $y=x/\varepsilon$ the microscopic variable representing fast variations at the microscopic geometry. With this convention, we write $\mathbf{A}(x/ \varepsilon) = \mathbf{A}_{\varepsilon}(x) = \mathbf{A}(y)$. Let us define the function space 
	$$V^\varepsilon := \{v \in H^1(\Omega^\varepsilon)| v = 0 \text{ on } \Gamma^{\text{ext}}\},$$
	which is a closed subspace of the Hilbert space $H^1(\Omega^\varepsilon)$, and thus endowed with the semi-norm
	$$\|v\|_{V^\varepsilon} := \left(\sum_{i = 1}^{d}\int_{\Omega^\varepsilon}\left|\frac{\partial v}{\partial x_i}\right|^2dx\right)^{1/2} \text{ for all } v \in V^{\varepsilon}.$$\\
	Obviously, this norm is equivalent (uniformly in $\varepsilon$) to the usual $H^1$-norm by the Poincar\'{e} inequality (cf. \cite[Lemma 2.1]{CP99}). 
	
	Let us define the function space $\mathcal{W}_\varepsilon := L^2(\Gamma^\varepsilon) \cap L^2(\Omega^\varepsilon)$ with the inner product
	\begin{align}
	\langle u,v\rangle_{\mathcal{W}_\varepsilon} := \langle u,v\rangle_{L^2(\Gamma^\varepsilon)} + \langle u,v\rangle_{L^2(\Omega^\varepsilon)} \qquad  \text{ for } u, v \in \mathcal{W}_\varepsilon, \nonumber
	\end{align}
	and with the corresponding norm $
	\|u\|_{{\mathcal{W}}_\varepsilon}^2 := \|u\|_{L^2(\Gamma^\varepsilon)}^2 + \|u\|_{L^2(\Omega^\varepsilon)}^2$. Then for each $\varepsilon > 0$ we introduce the function space $\widetilde{\mathcal{W}}_\varepsilon$ equipped with the following inner product
	\begin{align}
	\langle u,v\rangle_{\widetilde{\mathcal{W}}_\varepsilon} := \langle \nabla u, \nabla v\rangle_{L^2(\Omega^\varepsilon)} + (\varepsilon^\alpha + \varepsilon^\beta)\langle u,v\rangle_{\mathcal{W}_\varepsilon} \quad \text{ for } u,v \in \widetilde{\mathcal{W}}_\varepsilon,\nonumber
	\end{align}
	and the corresponding norm is given by
	$
	\|u\|_{\widetilde{\mathcal{W}}_\varepsilon}^2 := \|\nabla u\|_{L^2(\Omega^\varepsilon)}^2 + (\varepsilon^\alpha + \varepsilon^\beta)\|u\|_{\mathcal{W}_\varepsilon}^2$.
	This space can be considered as the intersection between $V^\varepsilon$ and the $\varepsilon$-scaled $\mathcal{W}_\varepsilon$. Hereby, $\widetilde{\mathcal{W}}_\varepsilon$ is a Hilbert space.  
	
	We introduce a bilinear form 
	$a : \widetilde{\mathcal{W}}_\varepsilon \times \widetilde{\mathcal{W}}_\varepsilon \to \mathbb{R}$ by \begin{align}\label{bilinearform}
	a(u,\varphi) := \int_{\Omega^\varepsilon}\mathbf{A}_\varepsilon(x)\nabla u\cdot\nabla \varphi dx.
	\end{align}
	
	To be successful with our analysis below, we need the following assumptions:
	\begin{itemize}
		\item [($\text{A}_1$)] The diffusion coefficient $\mathbf{A}(y) \in L^{\infty}(\mathbb{R}^d)$ is $Y$-periodic and symmetric. It satisfies the uniform ellipticity condition, i.e there exists positive constants  $\underline{\gamma}, \overline{\gamma}$ independent of $\varepsilon$ such that
		$$\underline{\gamma}\left|\xi\right|^2 \leq \mathbf{A}(y)\xi_i\xi_j \leq \overline{\gamma}\left|\xi\right|^2 \text{ for any } \xi \in \mathbb{R}^d.$$ 
		\item [($\text{A}_2$)] The reaction terms $\mathcal{S} :  \mathbb{R}\to \mathbb{R}$ and $\mathcal{R}: \mathbb{R} \to \mathbb{R}$ are Carath\'{e}odory functions and globally Lipschitz.
		\item[($\text{A}_3$)] $\mathcal{S}$ and $\mathcal{R}$ do not degenerate, i.e there exist positive constants $\delta_0$ and $\delta_1$ independent of $\varepsilon$ such that $0 < \delta_0 \leq \mathcal{S}', \mathcal{R}' \leq \delta_1$ a.e. in $\mathbb{R}$.
		\item[($\text{A}_4$)] The internal source $f$ is a smooth function in $\Omega$.
	\end{itemize}

	In the sequel, all the constants $C$ are independent of the scale factor $\varepsilon$, but their precise values may differ from line to line and may change even within a single chain of estimates.
	
	\section{Weak solvability of $(P_{\varepsilon})$}\label{sec:3}
	Obviously, solvability of microscopic problems is of great importance in mathematical analysis; see e.g. \cite{Oleinik1992,Papanicolau1978,Schmuck2013}. In this section, we design a linearization scheme in line with \cite{Slodika2001} to investigate the well-posedness of $(P_\varepsilon)$. To do so, we accordingly need to derive the weak formulation of $(P_\varepsilon)$. Multiplying $(P_\varepsilon)$ by a test function $\varphi \in \widetilde{\mathcal{W}}_\varepsilon$ and using Green's formula, we arrive at
	the following definition of a weak solution of $(P_\varepsilon)$.
	
	\begin{definition}\label{dn_weak}
		For each $\varepsilon > 0$, $u_\varepsilon \in \widetilde{\mathcal{W}}_\varepsilon$ is a weak solution to $(P_\varepsilon)$, provided that
		\begin{align}\label{pt4}
		a(u_\varepsilon,\varphi) + \varepsilon^{\beta}\langle \mathcal{S}(u_{\varepsilon}),\varphi\rangle_{L^2(\Gamma^{\varepsilon})} + \varepsilon^{\alpha}\langle \mathcal{R}(u_\varepsilon),\varphi\rangle_{L^2(\Omega^{\varepsilon})} = \langle f,\varphi \rangle_{L^2(\Omega^{\varepsilon})} \quad \text{ for all } \varphi \in \widetilde{\mathcal{W}}_\varepsilon.
		\end{align}
	\end{definition}
	Let us now introduce the definition of an approximation of \eqref{pt4}.
	
	\begin{definition}\label{dn_app}
		For each $\varepsilon > 0$, a linearization scheme for the weak formulation in Definition \ref{dn_weak} is defined by
		\begin{align}
		(\text{P}_\varepsilon^k): \ a(u_\varepsilon^k,\varphi) + L\langle u_\varepsilon^k,\varphi\rangle_{L^2(\Gamma^{\varepsilon})} + M\langle  u_\varepsilon^k,\varphi\rangle_{L^2(\Omega^{\varepsilon})} = \langle f,\varphi \rangle_{L^2(\Omega^{\varepsilon})} + L\langle  u_\varepsilon^{k - 1},\varphi\rangle_{L^2(\Gamma^\varepsilon)} \\+ M\langle  u_\varepsilon^{k - 1}, \varphi\rangle_{L^2(\Omega^\varepsilon)} - \varepsilon^{\beta}\langle \mathcal{S}(u_{\varepsilon}^{k-1}),\varphi\rangle_{L^2(\Gamma^\varepsilon)} - \varepsilon^{\alpha}\langle \mathcal{R}(u_\varepsilon^{k-1}),\varphi\rangle_{L^2(\Omega^\varepsilon)}\nonumber
		\end{align}
		for all $\varphi \in \widetilde{\mathcal{W}}_\varepsilon$ and $k \in \mathbb{N}^{*}$ with the initial guess $u_\varepsilon^0 \in \mathcal{W}_\varepsilon$ chosen as $0$ and the stabilization constants $L,M > 0$ chosen later.
	\end{definition}
	\begin{lemma} \label{lem:1}
		Suppose $(\text{A}_1)$ and $(\text{A}_4)$ hold. Assume further that there exist constants $\underline{c}_L, \overline{c}_L, \underline{c}_M, \overline{c}_M>0$ independent of $\varepsilon$ such that
		\begin{align}
		\underline{c}_L(\varepsilon^\alpha + \varepsilon^\beta) \leq L \leq \overline{c}_L(\varepsilon^\alpha + \varepsilon^\beta),
		\; \underline{c}_M(\varepsilon^\alpha + \varepsilon^\beta) \leq M \leq \overline{c}_M(\varepsilon^\alpha + \varepsilon^\beta).
		\label{cond:1}
		\end{align}
		Denote by $(P_\varepsilon^1)$ the first-loop problem for $(P_\varepsilon^k)$ defined in Definition \ref{dn_app}.
		Then it admits a unique solution $u \in \widetilde{\mathcal{W}}_\varepsilon$ for each $\varepsilon >0$. 	
	\end{lemma}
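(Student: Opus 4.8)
The plan is to read the first loop $(P_\varepsilon^1)$ as a standard linear variational problem on the Hilbert space $\widetilde{\mathcal{W}}_\varepsilon$ and to close it with the Lax--Milgram lemma. Since the initial guess is $u_\varepsilon^0 = 0$, the first-loop equation from Definition \ref{dn_app} reduces to: find $u \in \widetilde{\mathcal{W}}_\varepsilon$ such that $B(u,\varphi) = F(\varphi)$ for all $\varphi \in \widetilde{\mathcal{W}}_\varepsilon$, where $B(u,\varphi) := a(u,\varphi) + L\langle u,\varphi\rangle_{L^2(\Gamma^\varepsilon)} + M\langle u,\varphi\rangle_{L^2(\Omega^\varepsilon)}$ and $F(\varphi) := \langle f,\varphi\rangle_{L^2(\Omega^\varepsilon)} - \varepsilon^{\beta}\langle \mathcal{S}(0),\varphi\rangle_{L^2(\Gamma^\varepsilon)} - \varepsilon^{\alpha}\langle \mathcal{R}(0),\varphi\rangle_{L^2(\Omega^\varepsilon)}$; here $\mathcal{S}(0)$ and $\mathcal{R}(0)$ are the (finite, by $(\text{A}_2)$) constant values of the reaction terms at $0$, so the last two terms are constants tested against $\varphi$. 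Since $\widetilde{\mathcal{W}}_\varepsilon$ is already known to be Hilbert, it remains only to verify continuity and coercivity of $B$ and boundedness of $F$.

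Next I would check continuity of $B$. The diffusion part satisfies $|a(u,\varphi)| \le \overline{\gamma}\|\nabla u\|_{L^2(\Omega^\varepsilon)}\|\nabla \varphi\|_{L^2(\Omega^\varepsilon)}$ by the upper ellipticity bound in $(\text{A}_1)$, while Cauchy--Schwarz together with the upper bounds $L \le \overline{c}_L(\varepsilon^\alpha+\varepsilon^\beta)$ and $M \le \overline{c}_M(\varepsilon^\alpha+\varepsilon^\beta)$ from \eqref{cond:1} controls the zeroth-order terms by $\max\{\overline{c}_L,\overline{c}_M\}(\varepsilon^\alpha+\varepsilon^\beta)\|u\|_{\mathcal{W}_\varepsilon}\|\varphi\|_{\mathcal{W}_\varepsilon}$; using $(\varepsilon^\alpha+\varepsilon^\beta)^{1/2}\|v\|_{\mathcal{W}_\varepsilon} \le \|v\|_{\widetilde{\mathcal{W}}_\varepsilon}$, which is immediate from the definition of the $\widetilde{\mathcal{W}}_\varepsilon$-norm, this yields $|B(u,\varphi)| \le C\|u\|_{\widetilde{\mathcal{W}}_\varepsilon}\|\varphi\|_{\widetilde{\mathcal{W}}_\varepsilon}$ with $C$ depending only on $\overline{\gamma},\overline{c}_L,\overline{c}_M$. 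For coercivity I would combine the lower ellipticity bound in $(\text{A}_1)$ with the lower bounds $L \ge \underline{c}_L(\varepsilon^\alpha+\varepsilon^\beta)$, $M \ge \underline{c}_M(\varepsilon^\alpha+\varepsilon^\beta)$ to get $B(u,u) \ge \underline{\gamma}\|\nabla u\|_{L^2(\Omega^\varepsilon)}^2 + \min\{\underline{c}_L,\underline{c}_M\}(\varepsilon^\alpha+\varepsilon^\beta)\|u\|_{\mathcal{W}_\varepsilon}^2 \ge \min\{\underline{\gamma},\underline{c}_L,\underline{c}_M\}\|u\|_{\widetilde{\mathcal{W}}_\varepsilon}^2$, again with a constant independent of $\varepsilon$.

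Then I would verify that $F \in (\widetilde{\mathcal{W}}_\varepsilon)'$. The volume term is estimated by Cauchy--Schwarz and the Poincar\'{e} inequality on $V^\varepsilon$, using $f \in L^2(\Omega)$ from $(\text{A}_4)$: $|\langle f,\varphi\rangle_{L^2(\Omega^\varepsilon)}| \le \|f\|_{L^2(\Omega)}\|\varphi\|_{L^2(\Omega^\varepsilon)} \le C\|f\|_{L^2(\Omega)}\|\varphi\|_{\widetilde{\mathcal{W}}_\varepsilon}$, and the two constant terms are bounded by $|\Gamma^\varepsilon|^{1/2}\|\varphi\|_{L^2(\Gamma^\varepsilon)}$ and $|\Omega^\varepsilon|^{1/2}\|\varphi\|_{L^2(\Omega^\varepsilon)}$, hence by a multiple of $\|\varphi\|_{\mathcal{W}_\varepsilon} \le (\varepsilon^\alpha+\varepsilon^\beta)^{-1/2}\|\varphi\|_{\widetilde{\mathcal{W}}_\varepsilon}$; the resulting constant may depend on $\varepsilon$, which is harmless since $\varepsilon>0$ is fixed in the statement. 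With these three properties established, Lax--Milgram produces the unique $u \in \widetilde{\mathcal{W}}_\varepsilon$ solving $(P_\varepsilon^1)$.

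There is no genuine obstacle here — the content is essentially bookkeeping — but the one point that deserves care, and the reason \eqref{cond:1} is stated with two-sided bounds, is that it forces the continuity and coercivity constants of $B$ to be independent of $\varepsilon$. This is more than bare solvability of $(P_\varepsilon^1)$ requires, but it is exactly what will later provide the $\varepsilon$-uniform \emph{a priori} estimates and permit the iteration limit $k \to \infty$. Looking ahead, I would also note that the same Lax--Milgram argument runs inductively for every loop $(P_\varepsilon^k)$, the only extra ingredient being $\mathcal{S}(u_\varepsilon^{k-1}),\mathcal{R}(u_\varepsilon^{k-1}) \in L^2$, which follows from the global Lipschitz continuity in $(\text{A}_2)$ once $u_\varepsilon^{k-1} \in \widetilde{\mathcal{W}}_\varepsilon$ is known.
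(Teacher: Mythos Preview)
Your proposal is correct and follows essentially the same route as the paper: reduce $(P_\varepsilon^1)$ using $u_\varepsilon^0=0$, define the bilinear form $B$ (the paper calls it $K_\varepsilon$), and apply Lax--Milgram on $\widetilde{\mathcal{W}}_\varepsilon$. You supply the details the paper leaves implicit, in particular the explicit use of the two-sided bounds \eqref{cond:1} to make the continuity and coercivity constants $\varepsilon$-independent, and the verification that $F$ is a bounded functional; your closing remark about the inductive step for general $k$ is also in line with how the paper proceeds immediately after the lemma.
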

	\begin{proof} Due to $(\text{A}_4)$ and the choice of $u_\varepsilon^0$, the problem $(P_\varepsilon^1)$ reads as
		\begin{align*}
		a(u_{\varepsilon}^{1},\varphi) & +L\langle u_{\varepsilon}^{1},\varphi\rangle_{L^{2}(\Gamma^{\varepsilon})}+M\langle u_{\varepsilon}^{1},\varphi\rangle_{L^{2}(\Omega^{\varepsilon})}\\
		& =\langle f,\varphi\rangle_{L^{2}(\Omega^{\varepsilon})}-\varepsilon^{\beta}\langle\mathcal{S}(0),\varphi\rangle_{L^{2}(\Gamma^{\varepsilon})}-\varepsilon^{\alpha}\langle\mathcal{R}(0),\varphi\rangle_{L^{2}(\Omega^{\varepsilon})}
		\end{align*}
		for all $\varphi \in \widetilde{\mathcal{W}}_\varepsilon$. Let us put $K_\varepsilon: \widetilde{\mathcal{W}}_\varepsilon \times \widetilde{\mathcal{W}}_\varepsilon \to \mathbb{R}$ given by
		\begin{align}
		K_\varepsilon(u,\varphi) := a(u,\varphi) + L\langle u,\varphi\rangle_{L^2(\Gamma^\varepsilon)} + M\langle u,\varphi\rangle_{L^2(\Omega^\varepsilon)} \quad \text{ for } u, \varphi \in \widetilde{\mathcal{W}}_\varepsilon.\nonumber
		\end{align}
		Clearly, this form is bilinear in $\widetilde{\mathcal{W}}_\varepsilon$ and its coerciveness is easily guaranteed. Therefore, by the Lax--Milgram argument there exists a unique $u \in \widetilde{\mathcal{W}}_\varepsilon$ satisfies $(P_\varepsilon^1)$.
	\end{proof}
	As a consequence of Lemma \ref{lem:1}, the sequence $\left\{u_\varepsilon^k\right\}_{k\in \mathbb{N}^*}$ is well-defined in $ \widetilde{\mathcal{W}}_{\varepsilon}$ under condition \eqref{cond:1}. The notion of having this assumption is transparent in the next theorem, where the choice of our stabilization terms is included.
	
	\begin{theorem}
		Assume $(\text{A}_1)$--$(\text{A}_4)$ hold. There exists a choice of $L$ and $M$ such that \eqref{cond:1} holds and the sequence of solution $\{u_\varepsilon^k\}_{k\in \mathbb{N}^*}$  to $(P_\varepsilon^k)$ defined in \eqref{dn_app} is Cauchy in $\widetilde{\mathcal{W}}_\varepsilon$. Moreover, the following estimate holds
		\begin{align}
		\|u_\varepsilon^{k + r +1} - u_\varepsilon^{k+1}\|_{{\widetilde{\mathcal{W}}}_\varepsilon}
		\le
		\frac{\eta^{k}(1 - \eta^r)C}{1 - \eta}\|u_\varepsilon^1\|_{\widetilde{\mathcal{W}}_\varepsilon},\nonumber
		\end{align}
		where $\eta \in (0,1)$ is  $\varepsilon$-independent and $k, r \in \mathbb{N}^*$.
	\end{theorem}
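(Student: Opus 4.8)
The plan is to show that the linearization map $u_\varepsilon^{k-1}\mapsto u_\varepsilon^k$ is a contraction on $\widetilde{\mathcal{W}}_\varepsilon$ for a suitable choice of $L,M$, and then invoke the Banach fixed-point machinery to extract the Cauchy estimate with geometric rate. First I would subtract the equation $(P_\varepsilon^k)$ from $(P_\varepsilon^{k+1})$ and write $e^k := u_\varepsilon^{k+1}-u_\varepsilon^k$; testing with $\varphi = e^k \in \widetilde{\mathcal{W}}_\varepsilon$ yields an identity of the shape
\begin{align}
a(e^k,e^k) + L\|e^k\|_{L^2(\Gamma^\varepsilon)}^2 + M\|e^k\|_{L^2(\Omega^\varepsilon)}^2 &= L\langle e^{k-1},e^k\rangle_{L^2(\Gamma^\varepsilon)} + M\langle e^{k-1},e^k\rangle_{L^2(\Omega^\varepsilon)} \nonumber\\
&\quad - \varepsilon^\beta\langle \mathcal{S}(u_\varepsilon^k)-\mathcal{S}(u_\varepsilon^{k-1}),e^k\rangle_{L^2(\Gamma^\varepsilon)} \nonumber\\
&\quad - \varepsilon^\alpha\langle \mathcal{R}(u_\varepsilon^k)-\mathcal{R}(u_\varepsilon^{k-1}),e^k\rangle_{L^2(\Omega^\varepsilon)}. \nonumber
\end{align}
The left-hand side is bounded below using ellipticity $(\text{A}_1)$ by $\underline{\gamma}\|\nabla e^k\|_{L^2(\Omega^\varepsilon)}^2 + L\|e^k\|_{L^2(\Gamma^\varepsilon)}^2 + M\|e^k\|_{L^2(\Omega^\varepsilon)}^2$.

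For the right-hand side, the key point is to use $(\text{A}_3)$: by the mean value theorem, $\mathcal{S}(u_\varepsilon^k)-\mathcal{S}(u_\varepsilon^{k-1}) = \mathcal{S}'(\theta)e^{k-1}$ with $\delta_0 \le \mathcal{S}'(\theta)\le \delta_1$, and similarly for $\mathcal{R}$. The clever cancellation is to group $L\langle e^{k-1},e^k\rangle_{L^2(\Gamma^\varepsilon)} - \varepsilon^\beta\langle \mathcal{S}'(\cdot)e^{k-1},e^k\rangle_{L^2(\Gamma^\varepsilon)} = \langle (L - \varepsilon^\beta \mathcal{S}'(\cdot))e^{k-1}, e^k\rangle_{L^2(\Gamma^\varepsilon)}$, and likewise $\langle (M - \varepsilon^\alpha \mathcal{R}'(\cdot))e^{k-1}, e^k\rangle_{L^2(\Omega^\varepsilon)}$. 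If $L$ and $M$ are chosen so that $0 \le L - \varepsilon^\beta\mathcal{S}' \le$ (something comparable to $L$) and $0 \le M - \varepsilon^\alpha\mathcal{R}' \le$ (something comparable to $M$) pointwise, then Cauchy--Schwarz and Young's inequality on these two terms, absorbing the $e^k$ parts into the coercive left-hand side, give
$$
\|\nabla e^k\|_{L^2(\Omega^\varepsilon)}^2 + L\|e^k\|_{L^2(\Gamma^\varepsilon)}^2 + M\|e^k\|_{L^2(\Omega^\varepsilon)}^2 \le \eta^2\left(L\|e^{k-1}\|_{L^2(\Gamma^\varepsilon)}^2 + M\|e^{k-1}\|_{L^2(\Omega^\varepsilon)}^2\right),
$$
with $\eta\in(0,1)$ independent of $\varepsilon$. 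Because $L$ and $M$ are both comparable to $\varepsilon^\alpha+\varepsilon^\beta$ by \eqref{cond:1}, the quantity $L\|e\|_{L^2(\Gamma^\varepsilon)}^2 + M\|e\|_{L^2(\Omega^\varepsilon)}^2$ is equivalent, uniformly in $\varepsilon$, to $(\varepsilon^\alpha+\varepsilon^\beta)\|e\|_{\mathcal{W}_\varepsilon}^2$, so the above reads $\|e^k\|_{\widetilde{\mathcal{W}}_\varepsilon}^2 \le \eta^2 C \|e^{k-1}\|_{\widetilde{\mathcal{W}}_\varepsilon}^2$ up to the dropped gradient term on the right — more precisely one gets $\|e^k\|_{\widetilde{\mathcal{W}}_\varepsilon}\le \eta \|e^{k-1}\|_{\widetilde{\mathcal{W}}_\varepsilon}$ after folding the constants into the definition of $\eta$. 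I would then pick the explicit constants, e.g. $L = \varepsilon^\beta\delta_1 + \text{(margin)}\cdot(\varepsilon^\alpha+\varepsilon^\beta)$ and $M = \varepsilon^\alpha\delta_1 + \text{(margin)}\cdot(\varepsilon^\alpha+\varepsilon^\beta)$, and verify both that \eqref{cond:1} is met and that the resulting $\eta$ is a fixed number in $(0,1)$.

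Once $\|e^k\|_{\widetilde{\mathcal{W}}_\varepsilon}\le \eta\|e^{k-1}\|_{\widetilde{\mathcal{W}}_\varepsilon}$ is established, iterating gives $\|e^k\|_{\widetilde{\mathcal{W}}_\varepsilon}\le \eta^{k}\|e^0\|_{\widetilde{\mathcal{W}}_\varepsilon} = \eta^k\|u_\varepsilon^1\|_{\widetilde{\mathcal{W}}_\varepsilon}$ (using $u_\varepsilon^0 = 0$), and the telescoping sum $\|u_\varepsilon^{k+r+1}-u_\varepsilon^{k+1}\|_{\widetilde{\mathcal{W}}_\varepsilon}\le \sum_{j=k+1}^{k+r}\|e^j\|_{\widetilde{\mathcal{W}}_\varepsilon}\le \sum_{j=k+1}^{k+r}\eta^{j}\|u_\varepsilon^1\|_{\widetilde{\mathcal{W}}_\varepsilon} = \frac{\eta^{k+1}(1-\eta^r)}{1-\eta}\|u_\varepsilon^1\|_{\widetilde{\mathcal{W}}_\varepsilon}$ delivers the claimed bound (absorbing the stray power of $\eta$ and the norm-equivalence factor into $C$). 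Since $\eta\in(0,1)$ the partial sums are Cauchy, hence the sequence $\{u_\varepsilon^k\}$ is Cauchy in the complete space $\widetilde{\mathcal{W}}_\varepsilon$. The main obstacle I anticipate is bookkeeping the $\varepsilon$-powers carefully enough to guarantee that $\eta$ does not drift to $1$ as $\varepsilon\searrow 0^+$: the surface term carries $\varepsilon^\beta$ while $L\sim \varepsilon^\alpha+\varepsilon^\beta$, and the volume term carries $\varepsilon^\alpha$ while $M\sim\varepsilon^\alpha+\varepsilon^\beta$, so the ratios $\varepsilon^\beta\mathcal{S}'/L$ and $\varepsilon^\alpha\mathcal{R}'/M$ must be bounded away from $1$ uniformly — this is exactly what the two-sided bound \eqref{cond:1} together with a large enough multiplicative margin in the choice of $L,M$ buys, and making that precise (rather than the coercivity, which is routine) is where the real care goes.
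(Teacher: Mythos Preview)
Your approach is the same as the paper's: subtract consecutive iterates, test with the difference, exploit $(\text{A}_3)$ to bound the combined factor $(L-\varepsilon^\beta\mathcal{S}'(\cdot))e^{k-1}$ and its volume analogue, then telescope. Two points where the paper is sharper than your sketch.

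First, the explicit choice of $L,M$. The paper does not just add a ``margin'': it imposes the algebraic constraint $M-L=\delta_0(\varepsilon^\beta-\varepsilon^\alpha)$ together with $L\ge\delta_1\varepsilon^\beta$, $M\ge\delta_1\varepsilon^\alpha$, which forces $L+\delta_0\varepsilon^\beta=M+\delta_0\varepsilon^\alpha$ and collapses the four ratios appearing in $\eta_\varepsilon$ (two of them cross-ratios that could otherwise exceed $1$) down to two manageable ones. Your proposed form $L=\delta_1\varepsilon^\beta+c(\varepsilon^\alpha+\varepsilon^\beta)$ does not achieve this, and in fact the ratio $\delta_0\varepsilon^\beta/L$ then degenerates to $0$ when $\alpha<\beta$, pushing your $\eta$ toward $1$; the constraint above is precisely the fix.

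Second, your one-step claim $\|e^k\|_{\widetilde{\mathcal{W}}_\varepsilon}\le\eta\|e^{k-1}\|_{\widetilde{\mathcal{W}}_\varepsilon}$ is slightly too strong: the right-hand side of your displayed energy inequality carries only the weighted $L^2$ pieces, not $\|\nabla e^{k-1}\|$, so ``folding the norm-equivalence constants into $\eta$'' may push it past $1$. The paper handles this by a two-step argument: first drop the gradient on the left and iterate the pure $\mathcal{W}_\varepsilon$ contraction to get $(\varepsilon^\alpha+\varepsilon^\beta)\|e^k\|_{\mathcal{W}_\varepsilon}^2\le\eta^{2k}(\varepsilon^\alpha+\varepsilon^\beta)\|u_\varepsilon^1\|_{\mathcal{W}_\varepsilon}^2$; then feed this back into the full inequality once more to recover $\|\nabla e^{k+1}\|_{L^2(\Omega^\varepsilon)}^2\le C(\varepsilon^\alpha+\varepsilon^\beta)\|e^{k}\|_{\mathcal{W}_\varepsilon}^2$, so that $\|e^{k+1}\|_{\widetilde{\mathcal{W}}_\varepsilon}\le C\eta^{k}\|u_\varepsilon^1\|_{\widetilde{\mathcal{W}}_\varepsilon}$ with the constant $C$ now sitting harmlessly outside the geometric factor. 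The telescoping step is then exactly as you wrote.
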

	\begin{proof}
		Define $v_\varepsilon^k := u_\varepsilon^k - u_\varepsilon^{k - 1} \in \widetilde{\mathcal{W}}_\varepsilon$ where $u_{\varepsilon}^{k}$ and $u_{\varepsilon}^{k-1}$ correspond to the solution of $(P_{\varepsilon}^{k})$ and $(P_{\varepsilon}^{k-1})$, respectively. Then we have the following difference equation 
		\begin{align}\label{14_1}
		a(v_\varepsilon^k,\varphi) + L\langle v_\varepsilon^k,\varphi\rangle_{L^2(\Gamma^\varepsilon)} + M\langle v_\varepsilon^k,\varphi\rangle_{L^2(\Omega^\varepsilon)} = -  \varepsilon^{\beta}\langle \mathcal{S}(u_\varepsilon^{k-1}) - \mathcal{S}(u_\varepsilon^{k-2}),\varphi\rangle_{L^2(\Gamma^\varepsilon)}  \\
		-\varepsilon^{\alpha}\langle \mathcal{R}(u_\varepsilon^{k-1}) - \mathcal{R}(u_\varepsilon^{k - 2}),\varphi\rangle_{L^2(\Omega^\varepsilon)} \;\text{ for all }\varphi \in \widetilde{\mathcal{W}}_\varepsilon.\nonumber
		\end{align}
		Choosing a test function $\varphi = v_\varepsilon^k \in \widetilde{\mathcal{W}}_\varepsilon$ in \eqref{14_1}, we see that
		\begin{align} \label{pt3}
		\int_{\Omega^\varepsilon}\mathbf{A}_\varepsilon(x)\left|\nabla v_\varepsilon^k\right|^2dx + L\|v_\varepsilon^k\|_{L^2(\Gamma^{\varepsilon})}^2 +  M\|v_\varepsilon^k\|_{L^2(\Omega^{\varepsilon})}^2 = \varepsilon^{\beta}\langle \mathcal{S}(u_\varepsilon^{k-2}) \\- \mathcal{S}(u_\varepsilon^{k-1}),v_\varepsilon^k\rangle_{L^2(\Gamma^\varepsilon)} + \varepsilon^{\alpha}\langle \mathcal{R}(u_\varepsilon^{k-2}) - \mathcal{R}(u_\varepsilon^{k - 1}),v_\varepsilon^k\rangle_{L^2(\Omega^\varepsilon)} .\nonumber
		\end{align}
		Now, we define 
		$h(t) := \varepsilon^{\beta}\mathcal{S}(t) - Lt \text{ and }  g(t) := \varepsilon^{\alpha}\mathcal{R}(t) - Mt$. Then taking the first-order derivative of $h$ and $g$ with respect to $t$, we get
		\begin{align}\label{15_1}
		h'(t) = \varepsilon^{\beta}\mathcal{S}'(t) - L \text{ and }
		g'(t) = \varepsilon^{\alpha}\mathcal{R}'(t) - M.
		\end{align}
		Notice that because of the structure of $h$ and $g$, \eqref{pt3} becomes
		\begin{align}\label{16_1}
		& \int_{\Omega^\varepsilon}\mathbf{A}_\varepsilon(x)\left|\nabla v_\varepsilon^k\right|^2dx + L\|v_\varepsilon^k\|_{L^2(\Gamma^{\varepsilon})}^2 +  M\|v_\varepsilon^k\|_{L^2(\Omega^{\varepsilon})}^2 \\&= -\langle h(u_\varepsilon^{k-1}) - h(u_\varepsilon^{k-2}),v_\varepsilon^k\rangle_{L^2(\Gamma^\varepsilon)}  - \langle g(u_\varepsilon^{k-1}) - g(u_\varepsilon^{k - 2}),v_\varepsilon^k\rangle_{L^2(\Omega^\varepsilon) }.\nonumber
		\end{align}
		At this stage, we have to choose $L$ and $M$ such that 
		\begin{equation}
		L \geq \delta_1\varepsilon^\beta \text{ and } M \geq \delta_1 \varepsilon^\alpha.
		\label{17_1}
		\end{equation}
		As a result, $h'$ and $g'$ computed in \eqref{15_1} can be bounded with the help of $(\mathbf{A}_3)$ by
		\begin{align*}
		\varepsilon^\beta\delta_0 - L \leq h'(t) \leq 0,
		\varepsilon^\beta\delta_0 - M \leq g'(t) \leq 0 \quad \text{ a.e in } \mathbb{R},
		\end{align*}
		or it is equivalent to
		\begin{align}\label{18_1}
		\left|h'(t)\right| \leq  L - \varepsilon^\beta\delta_0 \text{ and } \left|g'(t)\right| \leq M - \varepsilon^\beta \delta_0 \quad \text{ a.e in } \mathbb{R}.
		\end{align}
		
		Combining \eqref{18_1} and $(\mathbf{A}_1)$, \eqref{16_1} leads to the following estimate:
		\begin{align*}
		& \underline{\gamma}\|\nabla v_\varepsilon^k\|_{L^2(\Omega^{\varepsilon})}^2 + L\|v_{\varepsilon}^k\|_{L^2(\Gamma^{\varepsilon})}^2 + M\|v_{\varepsilon}^k\|_{L^2(\Omega^{\varepsilon})}^2 \\ & \leq (L - \delta_0\varepsilon^{\beta})\|v_{\varepsilon}^{k-1}\|_{L^2(\Gamma^{\varepsilon})}\|v_{\varepsilon}^k\|_{L^2(\Gamma^\varepsilon)} + (M - \delta_0\varepsilon^{\beta})\|v_{\varepsilon}^{k-1}\|_{L^2(\Omega^{\varepsilon})}\|v_{\varepsilon}^k\|_{L^2(\Omega^\varepsilon)}.
		\end{align*}
		Using the Cauchy--Schwarz inequality, we have that
		\begin{align}\label{ineq_bfomit}
		&
		\underline{\gamma}\|\nabla v_\varepsilon^k\|_{L^2(\Omega^{\varepsilon})}^2 + \frac{L+\delta_0\varepsilon^\beta}{2}\|v_{\varepsilon}^k\|_{L^2(\Gamma^{\varepsilon})}^2 + \frac{M + \delta_0\varepsilon^\alpha}{2}\|v_{\varepsilon}^k\|_{L^2(\Omega^{\varepsilon})}^2 \\ &\leq \frac{L - \delta_0\varepsilon^{\beta}}{2}\|v_\varepsilon^{k - 1}\|_{L^2(\Gamma^\varepsilon)}^2 + \frac{M - \delta_0\varepsilon^{\alpha}}{2}\|v_{\varepsilon}^{k-1}\|_{L^2(\Omega^{\varepsilon})}^2 . \nonumber
		\end{align}
		Omitting the first term of the left-hand side of \eqref{ineq_bfomit}, we obtain
		\begin{align}\label{24_1}
		\|v_{\varepsilon}^k\|_{L^2(\Gamma^{\varepsilon})}^2 + \|v_{\varepsilon}^k\|_{L^2(\Omega^{\varepsilon})}^2 \leq \frac{L - \delta_0\varepsilon^{\beta}}{\min\{L + \delta_0\varepsilon^{\beta},M + \delta_0\varepsilon^{\alpha}\}}\|v_\varepsilon^{k - 1}\|_{L^2(\Gamma^{\epsilon})}^2 \\+ \frac{M - \delta_0\varepsilon^{\alpha}}{\min\{L + \delta_0\varepsilon^{\beta},M + \delta_0\varepsilon^{\alpha}\}}\|v_{\varepsilon}^{k-1}\|_{L^2(\Omega^{\varepsilon})}^2. \nonumber
		\end{align}
		
		Rewriting \eqref{24_1}, we thus have
		\begin{align}\label{ine_max}
		\|v_{\varepsilon}^k\|_{L^2(\Gamma^{\varepsilon})}^2 + \|v_{\varepsilon}^k\|_{L^2(\Omega^{\varepsilon})}^2 \leq \eta^2_\varepsilon(\alpha,\beta)\left(\|v_\varepsilon^{k - 1}\|_{L^2(\Gamma^{\epsilon})}^2 + \|v_{\varepsilon}^{k-1}\|_{L^2(\Omega^{\varepsilon})}^2\right),
		\end{align}
		where we have denoted by $$\eta_\varepsilon(\alpha,\beta) := \left(\max\left\{\frac{L - \delta_0\varepsilon^{\beta}}{L + \delta_0\varepsilon^{\beta}},\frac{L - \delta_0\varepsilon^{\beta}}{M + \delta_0\varepsilon^\alpha},\frac{M - \delta_0\varepsilon^{\alpha}}{M + \delta_0\varepsilon^{\alpha}},\frac{M - \delta_0\varepsilon^{\alpha}}{L + \delta_0\varepsilon^\beta}\right\}\right)^{1/2} .$$
		
		According to the linearization procedures, we need to find an $\varepsilon$-independent bound for $\eta_\varepsilon$ in \eqref{ine_max} such that it is strictly less than $1$.
		Accordingly, we choose the stabilization constants $L$ and $M$ such that $\eta_\varepsilon < 1$ for all $\varepsilon > 0$ and $\alpha, \beta \in \mathbb{R}$. Now, we may write $\eta_\varepsilon = \eta$, i.e. it is independent of $\varepsilon$ by suitable choices of $L, M$. Note that 
		\begin{align}\label{star}
		\frac{L - \delta_0\varepsilon^\beta}{L + \delta_0\varepsilon^\beta} < 1 \text{ and } \frac{M - \delta_0\varepsilon^\alpha}{M + \delta_0\varepsilon^\alpha} < 1
		\end{align}
		because of the choice \eqref{18_1}. Therefore, we target the following cases:  
		\begin{align}
		\frac{L - \delta_0\varepsilon^\beta}{M + \delta_0\varepsilon^\alpha} < 1 \ \text{ and } \ \frac{M - \delta_0\varepsilon^\alpha}{L + \delta_0\varepsilon^\alpha} < 1.\nonumber
		\end{align}
		
		From \eqref{star}, a suitable choice of $L,M$ is taking 
		\begin{align}
		M + \delta_0\varepsilon^{\alpha} \geq L + \delta_0\varepsilon^{\beta}
		\text{ and }
		L + \delta_0\varepsilon^{\beta} \geq M + \delta_0\varepsilon^{\alpha},\nonumber
		\end{align}
		which leads to $M - L = \delta_0(\varepsilon^{\beta} - \varepsilon^{\alpha})$. Hence, in accordance with \eqref{17_1} the suitable choice we eventually obtain is provided as follows:
		\begin{equation}
		L=L\left(\varepsilon\right):=\begin{cases}
		\delta_{1}\varepsilon^{\alpha}+\delta_{0}\left(\varepsilon^{\alpha}-\varepsilon^{\beta}\right) & \text{if }\alpha\le\beta,\\
		\delta_{1}\varepsilon^{\beta} & \text{if }\alpha\ge\beta,
		\end{cases}
		\label{choice1}
		\end{equation}
		\begin{equation}
		M=M\left(\varepsilon\right):=\begin{cases}
		\delta_{1}\varepsilon^{\alpha} & \text{if }\alpha\le\beta,\\
		\delta_{1}\varepsilon^{\beta}+\delta_{0}\left(\varepsilon^{\beta}-\varepsilon^{\alpha}\right) & \text{if }\alpha\ge\beta.
		\end{cases}
		\label{choice2}
		\end{equation}
		
		Interestingly, this choice works for all real scaling parameters $\alpha,\beta$. It also agrees with \eqref{17_1} and guarantees the positivity of such stabilization constants. In addition, we now observe that \eqref{choice1} and \eqref{choice2} are well-suited to the condition \eqref{cond:1} in Lemma \ref{lem:1}, where the well-posedness of the first-loop problem of $(P_\varepsilon^k)$ is proven. Collectively, we have demonstrated that there exists a choice of $L$ and $M$ satisfying \eqref{17_1} such that $\eta_\varepsilon = \eta < 1$ for all scaling factor $\varepsilon > 0$ and scaling parameters $\alpha,\beta\in\mathbb{R}$.
		
		As a consequence of \eqref{ine_max} and \eqref{choice1}--\eqref{choice2}, we conclude that for every $\varepsilon>0$ and $k\in\mathbb{N}^{*}$, the following estimate holds
		\begin{align}\label{ine_max-1}
		(\varepsilon^\alpha + \varepsilon^\beta) \|v_{\varepsilon}^k\|_{\mathcal{W}_{\varepsilon}}^2
		\leq \eta^2 (\varepsilon^\alpha + \varepsilon^\beta) \|v_\varepsilon^{k - 1}\|_{\mathcal{W}_{\varepsilon}}^2.
		\end{align}
		On the other hand, for any $k,r\in\mathbb{N}^*$ we have
		\begin{align}	\label{ine_max-2}
		&\sqrt{\varepsilon^\alpha + \varepsilon^\beta}\|u_\varepsilon^{k + r} - u_\varepsilon^k\|_{\mathcal{W}_{\varepsilon}} 
		\\
		&\leq \sqrt{\varepsilon^\alpha + \varepsilon^\beta}\|u_\varepsilon^{k + r} - u_\varepsilon^{k+ r-1}\|_{\mathcal{W}_{\varepsilon}}+ \ldots + \sqrt{\varepsilon^\alpha + \varepsilon^\beta}\|u_\varepsilon^{k+1} - u_\varepsilon^k\|_{\mathcal{W}_{\varepsilon}}\nonumber\\
		&\leq \sqrt{\varepsilon^\alpha + \varepsilon^\beta}(\eta^{k + r - 1} + \ldots + \eta^{k})\|u_\varepsilon^1 - u_\varepsilon^0\|_{\mathcal{W}_\varepsilon}\leq \frac{\eta^{k}(1 - \eta^r)}{1 - \eta}\sqrt{\varepsilon^\alpha + \varepsilon^\beta}\|u_\varepsilon^1\|_{\mathcal{W}_\varepsilon}.\nonumber
		\end{align}
		
		From now on, it remains to estimate the difference gradient we omitted in \eqref{ineq_bfomit}. Once again, it follows from \eqref{ineq_bfomit} and \eqref{choice1}--\eqref{choice2} that
		\begin{align}
		\underline{\gamma}\|\nabla v_\varepsilon^k\|_{L^2(\Omega^\varepsilon)}^2 
		&\leq \frac{L - \delta_0\varepsilon^\beta}{2}\|v_\varepsilon^{k-1}\|_{L^2(\Gamma^\varepsilon)}^2 + \frac{M - \delta_0\varepsilon^\alpha}{2}\|v_\varepsilon^{k - 1}\|_{L^2(\Omega^\varepsilon)}^2 \nonumber
		\\&
		\leq \frac{\delta_1+\delta_0}{2}(\varepsilon^\alpha + \varepsilon^\beta) \|v_\varepsilon^{k - 1}\|_{\mathcal{W}_{\varepsilon}}^2.\nonumber
		\end{align}
		We then combine this with \eqref{ine_max-1} to get
		\begin{align}
		\|\nabla v_\varepsilon^{k+1}\|_{L^2(\Omega^\varepsilon)}^2 
		\leq \eta^2 \frac{\delta_1+\delta_0}{2\underline{\gamma}}(\varepsilon^\alpha + \varepsilon^\beta) \|v_\varepsilon^{k }\|_{\mathcal{W}_{\varepsilon}}^2,\nonumber
		\end{align}
		and as a by-product, it yields
		\begin{equation}
		\|v_\varepsilon^{k + 1}\|_{{\widetilde{\mathcal{W}}}_\varepsilon}^2
		\le 
		\eta^2 \left(\frac{\delta_1+\delta_0}{2\underline{\gamma}} + 1\right)
		\|v_\varepsilon^{k}\|_{{\widetilde{\mathcal{W}}}_\varepsilon}^2.\nonumber
		\end{equation}
		At this moment, we proceed as in \eqref{ine_max-2} to arrive at
		\begin{equation}
		\|u_\varepsilon^{k + r + 1} - u_\varepsilon^{k+1}\|_{{\widetilde{\mathcal{W}}}_\varepsilon}
		\le
		\frac{\eta^{k}(1 - \eta^r)}{1 - \eta}\left(\frac{\delta_1+\delta_0}{2\underline{\gamma}} + 1\right)^{1/2}
		\|u_\varepsilon^1\|_{{\widetilde{\mathcal{W}}}_\varepsilon}.
		\label{finale}
		\end{equation}
		
		This completes the proof of the theorem.
	\end{proof}
	It is worth noting that from \eqref{ine_max-2}, the iterative sequence $\{u_\varepsilon^k\}_{k\in \mathbb{N}^*}$ is Cauchy in $\mathcal{W}_{\varepsilon}$ for any $\varepsilon>0$ when $\alpha = \beta = 0$.
	From \eqref{finale}, this sequence is Cauchy in $\widetilde{\mathcal{W}}_{\varepsilon}$. Consequently, there exists a unique $u_{\varepsilon}\in \widetilde{\mathcal{W}}_{\varepsilon}$ such that $u_{\varepsilon}^{k}\to u_{\varepsilon}$ as $k\to \infty$ and for each $\varepsilon>0$. On the other side, using the Lipschitz properties of the volume and surface reaction rates assumed in ($\text{A}_2$), we have
	\begin{align}
	& \varepsilon^{\beta}\mathcal{S}\left(u_{\varepsilon}^{k}\right)\to\varepsilon^{\beta}\mathcal{S}\left(u_{\varepsilon}\right)\;\text{strongly in }L^{2}\left(\Gamma^{\varepsilon}\right), \label{strong1}\\
	& \varepsilon^{\alpha}\mathcal{R}\left(u_{\varepsilon}^{k}\right)\to\varepsilon^{\alpha}\mathcal{R}\left(u_{\varepsilon}\right)\;\text{strongly in }L^{2}\left(\Omega^{\varepsilon}\right) \text{ as } k\to \infty.
	\label{strong2}
	\end{align}
	
	Hence, $u_\varepsilon$ is a unique solution of the microscopic model $(P_\varepsilon)$ in the sense of Definition \ref{dn_weak}. Besides, when taking $r\to \infty$ in \eqref{finale}, its stability is confirmed by
	\begin{align}
	\|u_\varepsilon^k - u_\varepsilon \|_{{\widetilde{\mathcal{W}}}_\varepsilon}^2 \leq C\eta^{2(k - 1)}\|u_\varepsilon^1\|_{\widetilde{\mathcal{W}}_\varepsilon}^2.\nonumber
	\end{align}
	
	As a result, we state the following theorem.
	\begin{theorem}\label{mainthm-2}
		Assume $(A_1)$--$(A_4)$ hold. Then for each $\varepsilon >0$ there exists a unique solution of $(P_\varepsilon)$ in the sense of Definition \ref{dn_weak}.
	\end{theorem}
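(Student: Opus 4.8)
The plan is to assemble the pieces already established — Lemma~\ref{lem:1} and the preceding theorem — and then close the argument with a limit passage in the linearization scheme plus a short monotonicity computation for uniqueness.

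\emph{Existence.} First I would fix $\varepsilon>0$ together with the stabilization constants $L=L(\varepsilon)$, $M=M(\varepsilon)$ from \eqref{choice1}--\eqref{choice2}, which satisfy \eqref{cond:1}, and show by induction on $k$ that $(P_\varepsilon^k)$ has a unique solution $u_\varepsilon^k\in\widetilde{\mathcal{W}}_\varepsilon$. Indeed, the left-hand side of $(P_\varepsilon^k)$ is the coercive bilinear form $K_\varepsilon$ introduced in the proof of Lemma~\ref{lem:1}, and, given $u_\varepsilon^{k-1}\in\widetilde{\mathcal{W}}_\varepsilon$, its right-hand side is a bounded linear functional on $\widetilde{\mathcal{W}}_\varepsilon$: by $(\text{A}_2)$ one has $\mathcal{S}(u_\varepsilon^{k-1})\in L^2(\Gamma^\varepsilon)$ and $\mathcal{R}(u_\varepsilon^{k-1})\in L^2(\Omega^\varepsilon)$, $(\text{A}_4)$ handles $f$, and for fixed $\varepsilon$ the $\widetilde{\mathcal{W}}_\varepsilon$-norm dominates both of the $L^2$-norms on $\Gamma^\varepsilon$ and on $\Omega^\varepsilon$, so Lax--Milgram applies (the base case $k=1$ being Lemma~\ref{lem:1}). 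By the theorem just proved, $\{u_\varepsilon^k\}_{k\in\mathbb{N}^{*}}$ is Cauchy in the Hilbert space $\widetilde{\mathcal{W}}_\varepsilon$, hence converges strongly to some $u_\varepsilon\in\widetilde{\mathcal{W}}_\varepsilon$. I would then pass to the limit $k\to\infty$ in $(P_\varepsilon^k)$ tested against a fixed $\varphi\in\widetilde{\mathcal{W}}_\varepsilon$: the correction terms $L\langle u_\varepsilon^k-u_\varepsilon^{k-1},\varphi\rangle_{L^2(\Gamma^\varepsilon)}$ and $M\langle u_\varepsilon^k-u_\varepsilon^{k-1},\varphi\rangle_{L^2(\Omega^\varepsilon)}$ vanish because $u_\varepsilon^k-u_\varepsilon^{k-1}\to0$ in $\mathcal{W}_\varepsilon$, the term $a(u_\varepsilon^k,\varphi)\to a(u_\varepsilon,\varphi)$ by continuity of $a$, and the global Lipschitzness of $\mathcal{S},\mathcal{R}$ from $(\text{A}_2)$ together with $u_\varepsilon^{k-1}\to u_\varepsilon$ in $\mathcal{W}_\varepsilon$ yields the strong convergences \eqref{strong1}--\eqref{strong2}. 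The limiting identity is exactly \eqref{pt4}, so $u_\varepsilon$ is a weak solution of $(P_\varepsilon)$.

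\emph{Uniqueness.} If $u_\varepsilon,\widehat{u}_\varepsilon\in\widetilde{\mathcal{W}}_\varepsilon$ both satisfy \eqref{pt4}, I would subtract the two identities and test with $w:=u_\varepsilon-\widehat{u}_\varepsilon\in\widetilde{\mathcal{W}}_\varepsilon$, getting
\begin{align*}
a(w,w)+\varepsilon^{\beta}\langle\mathcal{S}(u_\varepsilon)-\mathcal{S}(\widehat{u}_\varepsilon),w\rangle_{L^2(\Gamma^\varepsilon)}+\varepsilon^{\alpha}\langle\mathcal{R}(u_\varepsilon)-\mathcal{R}(\widehat{u}_\varepsilon),w\rangle_{L^2(\Omega^\varepsilon)}=0.
\end{align*}
By $(\text{A}_3)$ and the mean value theorem, $(\mathcal{S}(a)-\mathcal{S}(b))(a-b)\ge\delta_0|a-b|^2\ge0$ and similarly for $\mathcal{R}$, so the two reaction terms are nonnegative; since $a(w,w)\ge\underline{\gamma}\|\nabla w\|_{L^2(\Omega^\varepsilon)}^2$ by $(\text{A}_1)$, every term on the left is nonnegative, hence each equals $0$. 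In particular $\nabla w=0$ in $\Omega^\varepsilon$, and since $w\in V^\varepsilon$ vanishes on $\Gamma^{\text{ext}}$ the Poincar\'e inequality forces $w=0$, i.e. $u_\varepsilon=\widehat{u}_\varepsilon$.

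\emph{Expected main obstacle.} Essentially all the real work has been front-loaded into the previous theorem, namely producing an $\varepsilon$-uniform contraction factor $\eta<1$ through the delicate split \eqref{choice1}--\eqref{choice2} of $L$ and $M$ (valid for all real $\alpha,\beta$). Granting that, the only point here demanding some care is the limit passage in the nonlinear \emph{surface} term over $\Gamma^\varepsilon$; this is legitimate precisely because convergence in $\widetilde{\mathcal{W}}_\varepsilon$ controls the $L^2(\Gamma^\varepsilon)$-component and $\varepsilon$ is held fixed throughout, so no trace compactness uniform in $\varepsilon$ is needed — everything else is a routine $\varepsilon$-fixed argument.
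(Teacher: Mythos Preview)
Your proposal is correct and follows essentially the same route as the paper: both arguments feed the Cauchy estimate \eqref{finale} from the preceding theorem into a strong limit in $\widetilde{\mathcal{W}}_\varepsilon$, then pass to the limit in $(P_\varepsilon^k)$ via the Lipschitz convergences \eqref{strong1}--\eqref{strong2}. Your write-up is in fact more complete than the paper's, which leaves the limit passage and the uniqueness step implicit; your explicit monotonicity argument for uniqueness (using $(\text{A}_3)$ to make the reaction terms nonnegative after testing with $w=u_\varepsilon-\widehat{u}_\varepsilon$) is a clean addition the paper does not spell out.
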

	\begin{remark}
		Compared to the mild restriction in \cite{KM16-1,KM16}, we no longer rely on the Poincar\'e constant and the lower bound of the diffusion. However, information of reaction terms in this context is further required as specified in ($\text{A}_3$). 

	\end{remark}
	\section{ Asymptotic behaviors and convergence results}\label{sec:4}
	\subsection{Volume reaction and surface reaction}\label{sec:4.1}
	
	In this subsection, we aim to see the asymptotic behaviors of the microscopic solution of $(P_{\varepsilon})$, when the volume and surface reactions are involved separately. In other words, as the starting point we consider the following problems:
	\[
	\left(P_{\varepsilon}^{R}\right):\;\begin{cases}
	\nabla\cdot\left(-\mathbf{A}\left(x/\varepsilon\right)\nabla u_{\varepsilon}\right) + \varepsilon^{\alpha}\mathcal{R}\left(u_{\varepsilon}\right)= f & \text{in}\;\Omega^{\varepsilon},\\
	-\mathbf{A}\left(x/\varepsilon\right)\nabla u_{\varepsilon}\cdot\text{n}=0 & \text{across}\;\Gamma^{\varepsilon},\\
	u_{\varepsilon}=0 & \text{across}\;\Gamma^{\text{ext}},
	\end{cases}
	\]
	\[
	\left(P_{\varepsilon}^{S}\right):\;\begin{cases}
	\nabla\cdot\left(-\mathbf{A}\left(x/\varepsilon\right)\nabla u_{\varepsilon}\right)=f & \text{in}\;\Omega^{\varepsilon},\\
	-\mathbf{A}\left(x/\varepsilon\right)\nabla u_{\varepsilon}\cdot\text{n}=\varepsilon^{\beta}\mathcal{S}\left(u_{\varepsilon}\right) & \text{across}\;\Gamma^{\varepsilon},\\
	u_{\varepsilon}=0 & \text{across}\;\Gamma^{\text{ext}}.
	\end{cases}
	\]
	
	
	\subsubsection{Volume reaction}
	
	\subsubsection*{The case $\alpha > 0$}
	
	Given a natural constant $\theta\ge2$, we define
	the index set
	\begin{equation}\label{Mo}
	\mathcal{M}_{\alpha,\theta}:=\left\{ k,l\in\left[0,\theta\right]:k\alpha+l\ge1\;\text{and}\;k+l \leq \theta\right\}.
	\end{equation}
	
	The asymptotic expansion we consider here is structured as follows: 
	\begin{align}\label{expansion_1}
	u_{\varepsilon}\left(x\right)=u_{0}\left(x,y\right)+\varepsilon^{\alpha}u_{1,-1}\left(x,y\right)+\sum_{\left(k,l\right)\in\mathcal{M}_{\alpha,\theta}}\varepsilon^{k\alpha+l}u_{k,l}\left(x,y\right)+\mathcal{O}\left(\varepsilon^{\theta+1}\right),
	\end{align}
	where $x\in \Omega^{\varepsilon},y\in Y_{l}$ and all components $u_{k,l}$ are periodic in $y$.
	
	\begin{remark}
		This ansatz essentially mimics the standard two-scale asymptotic expansions used in the homogenization theory for second-order elliptic equations. Since the diffusion coefficient of the PDE is periodic in $y$, it is reasonable to require that all $u_{k,l}$ are periodic functions of $y$. For $\varepsilon \ll 1$ the microscopic variable $y$ changes much more rapidly than $x$ and heuristically, the macroscopic variable can be viewed as a ``constant'', when looking at the microscopic problem. This is why the method is expected to treat the ``slow'' variable $x$ and the ``fast'' one $y$ independently. Furthermore, this way the gradient operator and the gradient of the fluxes can be evaluated according to the rule $\nabla = \nabla_{x} + \varepsilon^{-1}\nabla_{y}$. With this essence in mind, our designation of asymptotic expansions in this work are such that we are able to handle variable scalings $\alpha$ and $\beta$ in the PDE in the rigorous asymptotic analysis. From the physical point of view, the use of asymptotic expansions in understanding size effects in periodic media was studied in e.g. \cite{Triantafyllidis1996}.
	\end{remark}
	
	Assume that there exists a Lipschitz-continuous function $\bar{\mathcal{R}}$ such that
	\begin{equation}
	\mathcal{R}(u^\varepsilon) = \bar{\mathcal{R}}(u_0) + \varepsilon^{\alpha} \bar{\mathcal{R}}(u_{1,-1}) + 
	\sum_{(k,l) \in \mathcal{M}_{\alpha,\theta}} \varepsilon^{k(\alpha + 1)+l } \bar{\mathcal{R}}(u_{k,l}) +
	\mathcal{O}(\varepsilon^{\theta + 1}).
	\label{eq:conditionR}
	\end{equation}
	This corresponds to the fact that there exists $\mathbf{L}_{\mathcal{R}}>0$ such that
	\begin{equation}
	\left\Vert \bar{\mathcal{R}}(u) - \bar{\mathcal{R}}(v)\right\Vert_{L^2(\Omega^{\varepsilon})}
	\le \mathbf{L}_{\mathcal{R}} \left\Vert u - v\right\Vert_{L^2(\Omega^{\varepsilon})} \text{ for }   u,v\in\mathbb{R}.
	\label{eq:conditionR-1}
	\end{equation}
	
	\begin{remark}
		In the sequel, our new assumptions \eqref{eq:conditionR} and \eqref{eq:conditionR-1} on the  reaction rate $\mathcal{R}$ are termed as ($\text{A}_5$) and ($\text{A}_6$), respectively. It resembles the definition of almost additive functions with positive homogeneity in stochastic processes (see, e.g., \cite{SSV17}).
	\end{remark}
	
	Due to the simple relation $\nabla = \nabla_{x} + \varepsilon^{-1}\nabla_{y}$, it follows that
	\begin{align*}
	\nabla u_{\varepsilon} & =\varepsilon^{-1}\nabla_{y}u_{0}+\varepsilon^{\alpha-1}\nabla_{y}u_{1,-1}+\varepsilon^{0}\left(\nabla_{x}u_{0}+\nabla_{y}u_{0,1}\right)\\
	& +\varepsilon^{\alpha}\left(\nabla_{x}u_{1,-1}+\nabla_{y}u_{1,0}\right)+\sum_{\left(k,l\right)\in\mathcal{N}_{\alpha,\theta}}\varepsilon^{k\alpha+l}\left(\nabla_{x}u_{k,l}+\nabla_{y}u_{k,l+1}\right)+\mathcal{O}\left(\varepsilon^{\theta}\right),
	\end{align*}
	where $\mathcal{N}_{\alpha,\theta}=\mathcal{M}_{\alpha,\theta}\backslash\left\{ \left(0,1\right)\right\} $. Hereafter, the diffusion term involved in $\left(P_{\varepsilon}^{R}\right)$ is expressed as
	\begin{align}\label{eq:diffusion}
	&\nabla\cdot\left(-\mathbf{A}\left(y\right)\nabla u_{\varepsilon}\right) \\ &  =\varepsilon^{-2}\nabla_{y}\cdot\left(-\mathbf{A}\left(y\right)\nabla_{y}u_{0}\right)+\varepsilon^{\alpha-2}\nabla_{y}\cdot\left(-\mathbf{A}\left(y\right)\nabla_{y}u_{1,-1}\right)
	\nonumber\\
	& +\varepsilon^{-1}\left(\nabla_{x}\cdot\left(-\mathbf{A}\left(y\right)\nabla_{y}u_{0}\right)+\nabla_{y}\cdot\left(-\mathbf{A}\left(y\right)\left(\nabla_{x}u_{0}+\nabla_{y}u_{0,1}\right)\right)\right)
	\nonumber\\
	& +\varepsilon^{\alpha-1}\left(\nabla_{x}\cdot\left(-\mathbf{A}\left(y\right)\nabla_{y}u_{1,-1}\right)+\nabla_{y}\cdot\left(-\mathbf{A}\left(y\right)\left(\nabla_{x}u_{1,-1}+\nabla_{y}u_{1,0}\right)\right)\right)
	\nonumber\\
	& +\varepsilon^{0}\left(\nabla_{x}\cdot\left(-\mathbf{A}\left(y\right)\left(\nabla_{x}u_{0}+\nabla_{y}u_{0,1}\right)\right)+\nabla_{y}\cdot\left(-\mathbf{A}\left(y\right)\left(\nabla_{x}u_{0,1}+\nabla_{y}u_{0,2}\right)\right)\right)
	\nonumber\\
	& +\varepsilon^{\alpha}\left(\nabla_{x}\cdot\left(-\mathbf{A}\left(y\right)\left(\nabla_{x}u_{1,-1}+\nabla_{y}u_{1,0}\right)\right)+\nabla_{y}\cdot\left(-\mathbf{A}\left(y\right)\left(\nabla_{x}u_{1,0}+\nabla_{y}u_{1,1}\right)\right)\right)
	\nonumber\\
	& +\sum_{\left(k,l\right)\in\mathcal{N}_{\alpha,\theta}}\varepsilon^{k\left(\alpha+1\right)+l}\left[\nabla_{x}\cdot\left(-\mathbf{A}\left(y\right)\left(\nabla_{x}u_{k,l}+\nabla_{y}u_{k,l+1}\right)\right)\right.
	\nonumber\\
	& \left.+\nabla_{y}\cdot\left(-\mathbf{A}\left(y\right)\left(\nabla_{x}u_{k,l+1}+\nabla_{y}u_{k,l+2}\right)\right)\right]+\mathcal{O}\left(\varepsilon^{\theta-1}\right),\nonumber
	\end{align}
	while relying on ($\text{A}_5$), the reaction term can be decomposed as
	\begin{align}\label{Ru}
	\varepsilon^\alpha\mathcal{R}(u^{\varepsilon}) = 
	\varepsilon^\alpha\bar{\mathcal{R}}(u_0) + \varepsilon^{2\alpha}\bar{\mathcal{R}}(u_{1,-1}) + \sum_{(k,l) \in \mathcal{M}_{\alpha,\theta}}\varepsilon^{k(\alpha + 1) + l + \alpha}\bar{\mathcal{R}}(u_{k,l}) + \mathcal{O}(\varepsilon^{\theta+1}).
	\end{align}
	
	In the same vein, the term on internal micro-surfaces are determined by
	\begin{align}\label{eq:boundary}
	-\mathbf{A}(y)\nabla u_\varepsilon \cdot \text{n} &= -\varepsilon^{-1}\mathbf{A}(y)u_0\cdot \text{n} -\varepsilon^{\alpha - 1}\mathbf{A}\nabla_{y}u_{1,-1}\cdot \text{n} \\&- \varepsilon^0\mathbf{A}(y)(\nabla_{x}u_0 + \nabla_{y}u_{0,1})\cdot \text{n} - \varepsilon^{\alpha}\mathbf{A}(y)(\nabla_{x}u_{1,-1} + \nabla_{y}u_{1,0})\cdot \text{n} \nonumber\\&-\varepsilon\mathbf{A}(y)(\nabla_x u_{0,1}+ \nabla_y u_{0,2})\cdot \text{n} \nonumber
	- \varepsilon^{\alpha + 1}\mathbf{A}(y)(\nabla_x u_{1,0} + \nabla_y u_{1,1})\cdot \text{n} \nonumber\\&-\sum_{(k,l)\in \mathcal{K}_{\alpha,\theta}}\varepsilon^{k(\alpha + 1) + l}\mathbf{A}(y)\left(\nabla_{x}u_{k,l} + \nabla_{y}u_{k,l+1}\right)\cdot\text{n} + \mathcal{O}(\varepsilon^\theta),\nonumber
	\end{align}
	where $\mathcal{K}_{\alpha,\theta} = \mathcal{N}_{\alpha,\theta} \backslash \{(1,0)\}$. From now on, we set:
	\begin{align}\label{operators}
	\mathcal{A}_0 &:= \nabla_{y}\cdot(-\mathbf{A}(y)\nabla_{y}),  \\
	\mathcal{A}_1 &:= \nabla_{x}\cdot(-\mathbf{A}(y)\nabla_{y}) + \nabla_{y}\cdot(-\mathbf{A}(y)\nabla_{x}),\nonumber\\
	\mathcal{A}_2&:= \nabla_{x}\cdot(-\mathbf{A}(y)\nabla_{x}).\nonumber
	\end{align}
	
	We obtain the following auxiliary problems from  \eqref{eq:diffusion} and \eqref{eq:boundary}:
	\begin{align}
	(\varepsilon^{-2}): \quad &\begin{cases}
	\mathcal{A}_0u_0 = 0 \quad \text{ in } Y_l,\\
	-\mathbf{A}(y)\nabla_{y}u_0
	\cdot \text{n} =0 \quad \text{ on } \Gamma,\\
	u_0 \text{ is periodic in } y,
	\end{cases}\label{auxi_0}\\
	(\varepsilon^{\alpha - 2}): \quad &\begin{cases}
	\mathcal{A}_0 u_{1,-1} = 0 \quad \text{ in } Y_l,\\
	-\mathbf{A}(y)\nabla_y u_{1,-1}\cdot \text{n} = 0 \quad \text{ on } \Gamma,\\
	u_{1,-1} \text{ is periodic in } y,
	\end{cases}\label{auxi_1}\\
	(\varepsilon^{-1}): \quad &\begin{cases}
	\mathcal{A}_0 u_{0,1} = -\mathcal{A}_1 u_0 \quad \text{ in } Y_l,\\
	-\mathbf{A}(y)(\nabla_x u_0 + \nabla_y u_{0,1})\cdot \text{n} = 0 \quad \text{ on } \Gamma,\\
	u_{0,1} \text{ is periodic in } y,
	\end{cases}\label{auxi_2}\\
	(\varepsilon^{\alpha - 1}): \quad &\begin{cases}
	\mathcal{A}_0 u_{1,0} = -\mathcal{A}_1 u_{1,-1} \quad \text{ in } Y_l,\\
	-\mathbf{A}(y)(\nabla_x u_{1,-1} + \nabla_y u_{1,0}) \cdot \text{n} = 0 \quad \text{ on } \Gamma,\\
	u_{1,0} \text{ is periodic in } y,
	\end{cases}\label{auxi_3}\\
	(\varepsilon^0): \quad &\begin{cases}
	\mathcal{A}_0 u_{0,2}  = f - \mathcal{A}_1 u_{0,1} - \mathcal{A}_2u_0 \quad \text{ in } Y_l,\\
	-\mathbf{A}(y)(\nabla_x u_{0,1} + \nabla_y u_{0,2}) \cdot \text{n} = 0 \quad \text{ on } \Gamma,\\
	u_{0,2} \text{ is periodic in } y,
	\end{cases}\label{auxi_4}\\
	(\varepsilon^{\alpha}): \quad&\begin{cases}
	\mathcal{A}_0u_{1,1} = - \mathcal{A}_1u_{1,0} - \mathcal{A}_2u_{1,-1} \quad \text{ in } Y_l,\\
	-\mathbf{A}(y)(\nabla_{x}u_{1,0} +\nabla_{y}u_{1,1}) \cdot \text{n} = 0 \quad \text{ on } \Gamma,\\
	u_{1,1} \text{ is periodic in } y.
	\end{cases}\label{auxi_5}\\
	&\vdots \nonumber\\
	(\varepsilon^{k(\alpha+1)+l}): \quad&\begin{cases}
	\mathcal{A}_0u_{k,l+2} = - \mathcal{A}_1u_{k,l+1} - \mathcal{A}_2u_{k,l} \quad \text{ in } Y_l,\\
	-\mathbf{A}(y)(\nabla_{x}u_{k,l+1} + \nabla_{y}u_{k,l+2})\cdot \text{n} = 0 \quad \text{ on } \Gamma,\\
	u_{k,l+2} \text{ is periodic in } y,
	\end{cases}\label{auxi_6}
	\end{align}
	for all pairs $(k,l) \in \mathcal{K}_{\alpha,\theta-2}$.
	
	By classical arguments in homogenization procedures, one has  from \eqref{auxi_0} and \eqref{auxi_1} that $u_0$ and $u_{1,-1}$ are independent of $y$. Without loss of generality, we  take $u_{1,-1}\equiv 0$ and by substitution, we also get $u_{1,0}\equiv 0$ in \eqref{auxi_3}. Besides, we write
	\begin{align}\label{u0}
	u_{0}(x,y) = \tilde{u}_0(x).
	\end{align}
	Therefore, the auxiliary problem \eqref{auxi_2} is solvable with respect to $u_{0,1}$. Plugging all auxiliary solutions that have been deduced above into \eqref{auxi_4} and \eqref{auxi_5}, we easily obtain $u_{0,2}$ and $u_{1,1}$. On the whole, we repeat the same strategy and ensure the solvability of the high-order auxiliary problem \eqref{auxi_6}. From e.g. \cite{CP99}, the existence and uniqueness results for \eqref{auxi_2} are trivial and the solution $u_{0,1}$ is sought in the sense of separation of variables. In other words, we have that
	\begin{align}\label{u_01}
	u_{0,1}(x,y) = -\chi_{0,1}(y)\cdot\nabla_{x}\tilde{u}_{0}(x).
	\end{align}
	
	Hereby, the following cell problem for the field $\chi_{0,1}(y)$ is obtained:
	\begin{align}\label{chi01}
	\begin{cases}
	\mathcal{A}_{0}\chi_{0,1}^{j}=\dfrac{\partial\mathbf{A}_{ij}}{\partial y_{i}} & \text{in }Y_{l},\\
	-\mathbf{A}\left(y\right)\nabla_{y}\chi_{0,1}^{j}\cdot\text{n}=\mathbf{A}\left(y\right)\cdot\text{n}_j & \text{on }\Gamma,\\
	\chi_{0,1}^{j}\text{ is periodic},
	\end{cases}
	\end{align}
	where $\mathbf{A}_{ij}$ are elements of the second-order tensor $\mathbf{A}$ with $1\le i,j\le d$ and $\chi_{0,1}^{j}=\chi_{0,1}^{j}(y)$ are elements in the cell vector-valued function $\chi_{0,1}$. Remarkably, classical results provide that $\chi_{0,1}\in [H^{1}_{\#}(Y_{l})/ \mathbb{R}]^{d}$ exists uniquely in these cell problems.

	From the cell function $\chi_{0,1}$ in \eqref{u_01}, we obtain the limit equation by taking into account the auxiliary problem \eqref{auxi_4}. In fact, the limit equation is of the following structure:
	\begin{align}\label{limit_construct1-1}
	-\bar{\mathbf{A}}\Delta_{x}\tilde{u}_{0}=\frac{\left|Y_{l}\right|}{\left|Y\right|}f\quad\text{in }\Omega,
	\end{align}
	where the coefficient $\left|Y_{l}\right|/\left|Y\right|$ is referred to as the volumetric porosity and $\bar{\mathbf{A}}$ given by
	\begin{align}\label{limit_construct2-1}
	\bar{\mathbf{A}}=\frac{1}{\left|Y\right|}\int_{Y_{l}}\left(-\nabla_{y}\chi_{0,1}\left(y\right)+\mathbb{I}\right)\mathbf{A}\left(y\right)dy,
	\end{align}
	is the effective diffusion coefficient corresponding to $\mathbf{A}$ with $\mathbb{I}$ being the identity matrix.
	
	Obviously, this limit equation is supplemented with the zero Dirichlet boundary condition on $\Gamma^{\text{ext}}$ and $\bar{\mathbf{A}}$ satisfies the ellipticity condition (cf. \cite[Proposition 2.6]{CP99}).
	
	\begin{remark}\label{remarkCinf} We recall from \cite{Khoa17} that when $\alpha = 0$, the limit equation becomes semi-linear, i.e.
		\begin{align}\label{limit_construct1}
		-\bar{\mathbf{A}}\Delta_{x}\tilde{u}_{0}-\frac{\left|Y_{l}\right|}{\left|Y\right|}\bar{\mathcal{R}}\left(\tilde{u}_{0}\right) = 0\quad\text{in }\Omega,
		\end{align}
		where we have omitted $f$, for simplicity.
		Based on the Lax--Milgram argument, the limit problem \eqref{limit_construct1} with the homogeneous Dirichlet boundary condition admits a unique solution $\tilde{u}_0 \in H_0^1(\Omega)$ due to the Lipschitz reaction term. Moreover, from \cite[Lemma 5]{GMRL09}, it is essentially bounded and the following estimate holds
		\begin{align}
		\|\tilde{u}_0\|_{L^\infty(\Omega)} \leq C(\|\tilde{u}_0\|_{L^2(\Omega)} + 1).\nonumber
		\end{align}
		Accordingly, these results can be applied to the limit problem \eqref{limit_construct1-1}, including the existence and uniqueness of $\tilde{u}_0 \in H_0^1(\Omega)$ for any $f \in L^2(\Omega)$. From \cite[Corollary 8.11]{GT83}, if $f\in C^\infty(\Omega)$, this solution $\tilde{u}_0$ belongs to $C^\infty(\Omega)$.
		
	\end{remark}
	
	Due to the structure of the auxiliary problems \eqref{auxi_0}--\eqref{auxi_6}, we get $u_{k,l}\equiv 0$ for all $k \ge 1$ and $(k,l) \in \mathcal{K}_{\alpha,\theta}$. In line with \cite{Khoa17}, we obtain when $k=0$ that
	\begin{align}\label{u_0l}
	u_{0,l} = (-1)^{l} \chi_{0,l}(y)\cdot\nabla_{x}^{l}\tilde{u}_{0}(x).
	\end{align}
	
	Thus, we obtain the following high-order cell problems in this case:
	\begin{align}\label{chigeneral}
	\begin{cases}
	\nabla_{y}\cdot\left(-\mathbf{A}\left(y\right)\left(\nabla_{y}\chi_{0,l+2}-\chi_{0,l+1}\right)\right)\nabla_{x}^{l+2}\tilde{u}_{0}\\
	=\left(-1\right)^{l}\bar{\mathcal{R}}\left(\left(-1\right)^{l}\chi_{0,l}\nabla_{x}^{l}\tilde{u}_{0}\right)-\left(\mathbf{A}\left(y\right)-\mathbb{I}\right)\nabla_{y}\chi_{0,l+1}\nabla_{x}^{l+2}\tilde{u}_{0} & \text{in }Y_{l},\\
	-\mathbf{A}\left(y\right)\left(\nabla_{y}\chi_{0,l+2}-\chi_{0,l+1}\right)\nabla_{x}^{l+2}\tilde{u}_{0}\cdot\text{n}=0 & \text{on }\Gamma,\\
	\chi_{0,l+2}\text{ is periodic}.
	\end{cases}
	\end{align}
	
	Combining \eqref{u0}, \eqref{u_01} and \eqref{u_0l} we recover the structure of the asymptotic expansion for $u^{\varepsilon}$ defined in \eqref{expansion_1}, as follows:
	\begin{align}\label{expansion1}
	u_\varepsilon(x) = \tilde{u}_0(x)  -\varepsilon\chi_{0,1}\left(\frac{x}{\varepsilon}\right)\cdot\nabla_{x}\tilde{u}_{0}(x)
	+ \sum_{l = 2}^{\theta}(-1)^l\varepsilon^{l}\chi_{0,l}\left(\frac{x}{\varepsilon}\right)\cdot \nabla_{x}^l\tilde{u}_0(x) + \mathcal{O}(\varepsilon^{\theta + 1}) 
	\end{align}
	with the cell functions $\chi_{0,l}$ for $1 \leq l \leq \theta$ satisfying the cell problems defined in \eqref{chi01} and \eqref{chigeneral}.

	At this point, we have derived the structure of two-scale asymptotic expansions where the scaling parameter $\alpha$ is positive. In the following, we show that the speed of convergence can be accelerated if the high-order asymptotic expansion is chosen appropriately. In addition, this questions how much regularity on the involved data we require to achieve the desired order of expansion as well as the rate of convergence.
	
	We introduce a smooth cut-off function $m^{\varepsilon} \in \mathcal{D}(\Omega)$ such that $0\le m^{\varepsilon} \le 1$ with
	\[
	m^{\varepsilon}=\begin{cases}
	0 & \text{if dist}\left(x,\partial\Omega\right)\le\varepsilon,\\
	1 & \text{if dist}\left(x,\partial\Omega\right)\ge2\varepsilon,
	\end{cases}\quad\text{and}\quad\varepsilon\left|\nabla m^{\varepsilon}\right|\le C,
	\]
	for which the following helpful estimates hold (cf. e.g. \cite{CP99}):
	\begin{equation}\label{asp_m}
	\left\Vert 1-m^{\varepsilon}\right\Vert _{L^{2}\left(\Omega^{\varepsilon}\right)}\le C\varepsilon^{\frac{1}{2}},\quad\varepsilon\left\Vert \nabla m^{\varepsilon}\right\Vert _{L^{2}\left(\Omega^{\varepsilon}\right)}\le C\varepsilon^{\frac{1}{2}}.
	\end{equation}
	
	\begin{remark}
		The use of this cut-off function to prove the convergence rates is not only seen in elliptic problems that we have taken into consideration, but also can be  found in some particular multiscale models. Aside from our earlier works \cite{Khoa17,KM16}, this technique is applied in the works \cite{Schmuck2012,Khoa2019} for a nonlinear drift-reaction-diffusion model in a heterogeneous solid‐electrolyte composite and in \cite{Schmuck2017} in the context of phase field equations. Besides, we single out the survey \cite{Zhikov2016} and the work \cite{Suslina2013} for a concrete background of the so-called operator corrector estimates related to this approach.
		
	\end{remark}
	
	Given a natural number $\mu \in [0,\theta - 1]$, we define the function $\psi_{\varepsilon}$ by
	\begin{align}
	\psi_\varepsilon := u_\varepsilon - \left(\tilde{u}_0 +\sum_{(k,l) \in \mathcal{M}_{\alpha,\mu}}\varepsilon^{k(\alpha + 1) + l}u_{k,l}\right) - m^\varepsilon\sum_{(k,l) \in \mathcal{M}_{\alpha,\theta}\backslash \mathcal{M}_{\alpha,\mu}}\varepsilon^{k(\alpha + 1) + l}u_{k,l}.\nonumber
	\end{align}
	Observe that $\psi_\varepsilon$ can be decomposed further as
	\begin{align}\label{decomposition}
	\psi_\varepsilon = \underbrace{u_\varepsilon - \tilde{u}_0 - \sum_{(k,l) \in \mathcal{M}_{\alpha,\theta}}\varepsilon^{k(\alpha + 1) + l}u_{k,l}}_{:=\varphi_{\varepsilon}} + \underbrace{(1 - m^\varepsilon)\sum_{\substack{(k,l) \in \mathcal{M}_{\alpha,\theta}\backslash \mathcal{M}_{\alpha,\mu}}}\varepsilon^{k(\alpha + 1) + l}u_{k,l}}_{:=\sigma_{\varepsilon}}.
	\end{align}
	
	Now, we state our convergence result.
	
	\begin{theorem}\label{mainthm1}
		Assume $\left(\text{A}_{1}\right)$, $\left(\text{A}_{5}\right)$ and $\left(\text{A}_{6}\right)$
		hold. Furthermore, suppose that $f\in C^{\infty}\left(\Omega\right)$
		and let $\mathcal{M}_{\alpha,\theta}$ be defined as in \eqref{u_0l} for given
		parameters $\alpha>0$ and $2\le\theta\in\mathbb{N}$. Let $u_{\varepsilon}$
		and $\tilde{u}_{0}$ be unique weak solutions of the microscopic problem
		$\left(P_{\varepsilon}^{R}\right)$ and the limit problem \eqref{limit_construct1-1}, respectively.
		Let $u_{k,l}$ be defined as in \eqref{u_0l} for $\left(k,l\right)\in\mathcal{M}_{\alpha,\theta}$.
		Then, for any $\mu\in\left[0,\theta-1\right]$ the following high-order
		corrector estimate holds:
		\begin{align*}
		\left\Vert u_{\varepsilon}-\tilde{u}_{0}-\sum_{\left(k,l\right)\in\mathcal{M}_{\alpha,\mu}}\varepsilon^{k\left(\alpha+1\right)+l}u_{k,l}-m^{\varepsilon}\sum_{\left(k,l\right)\in\mathcal{M}_{\alpha,\theta}\backslash\mathcal{M}_{\alpha,\mu}}\varepsilon^{k\left(\alpha+1\right)+l}u_{k,l}\right\Vert _{V^{\varepsilon}}\\\leq C\left(\varepsilon^{\theta - 1 + \alpha} + \varepsilon^{\mu + 1/2}\right).
		\end{align*}
	\end{theorem}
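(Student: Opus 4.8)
The plan is to derive an equation satisfied by the error function $\psi_\varepsilon$ (equivalently by the decomposition $\psi_\varepsilon = \varphi_\varepsilon + \sigma_\varepsilon$ in \eqref{decomposition}), test it against $\psi_\varepsilon$ itself, and then exploit coercivity of the bilinear form $a$ together with the uniform Poincar\'e inequality to bound $\|\psi_\varepsilon\|_{V^\varepsilon}$ by the $L^2$-norm of the residual. First I would compute $\nabla\cdot(-\mathbf{A}_\varepsilon\nabla\psi_\varepsilon)$ by substituting the truncated expansion into the PDE of $(P_\varepsilon^R)$ and using the cell problems \eqref{chi01}, \eqref{chigeneral} together with the hierarchy of auxiliary problems \eqref{auxi_0}--\eqref{auxi_6}. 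By construction, all the terms up to order $\varepsilon^{\theta}$ in the diffusion expansion \eqref{eq:diffusion} cancel against the corresponding terms in the reaction expansion \eqref{Ru} and the source, so the interior residual for $\varphi_\varepsilon$ (the part with $m^\varepsilon\equiv 1$ far from $\partial\Omega$) is $\mathcal{O}(\varepsilon^{\theta - 1 + \alpha})$ in an appropriate dual sense — this accounts for the first term in the claimed rate.

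Next I would handle the boundary-layer contribution $\sigma_\varepsilon$. Since $\sigma_\varepsilon = (1-m^\varepsilon)\sum \varepsilon^{k(\alpha+1)+l}u_{k,l}$ is supported in the $2\varepsilon$-neighborhood of $\partial\Omega$, and the lowest order term omitted from $\mathcal{M}_{\alpha,\mu}$ carries a factor $\varepsilon^{\mu+1}$ (roughly speaking — the precise exponent is the minimal value of $k(\alpha+1)+l$ over $\mathcal{M}_{\alpha,\theta}\backslash\mathcal{M}_{\alpha,\mu}$, which is at least $\mu+1$ since $\alpha>0$), the estimates \eqref{asp_m} on $1-m^\varepsilon$ and $\varepsilon\nabla m^\varepsilon$ give $\|\sigma_\varepsilon\|_{V^\varepsilon} \le C\varepsilon^{\mu+1}\cdot\varepsilon^{-1/2} = C\varepsilon^{\mu+1/2}$, using the boundedness of $u_{k,l}$ and $\nabla u_{k,l}$ inherited from the smoothness of $\tilde{u}_0$ (Remark \ref{remarkCinf}) and regularity of the cell functions $\chi_{0,l}\in H^1_\#(Y_l)$. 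The cross terms generated when $\nabla$ hits $m^\varepsilon$ inside the PDE for $\psi_\varepsilon$ are also localized near $\partial\Omega$ and produce $\mathcal{O}(\varepsilon^{\mu+1/2})$ contributions after integration.

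The main step is then the energy estimate: testing the equation for $\psi_\varepsilon$ with $\varphi = \psi_\varepsilon \in V^\varepsilon$, applying ($\text{A}_1$) to get $\underline{\gamma}\|\nabla\psi_\varepsilon\|_{L^2(\Omega^\varepsilon)}^2 \le |\text{residual terms}|$, using the Lipschitz bound ($\text{A}_6$) on $\bar{\mathcal{R}}$ to absorb the reaction-difference term (here one needs $\|\psi_\varepsilon\|_{L^2} \le C\|\psi_\varepsilon\|_{V^\varepsilon}$ from Poincar\'e, and the Lipschitz constant is $\varepsilon$-independent so no smallness is required — unlike fixed-point homogenization, the linear structure of \eqref{limit_construct1-1} when one tracks $\bar{\mathcal{R}}$ through the expansion means the reaction contributes only to the residual, not to a term that must be absorbed on the left), and handling the surface term on $\Gamma^\varepsilon$ — which for $(P_\varepsilon^R)$ is the homogeneous Neumann condition, so it only contributes through the mismatch $-\mathbf{A}_\varepsilon\nabla\psi_\varepsilon\cdot\text{n}$ on $\Gamma^\varepsilon$, estimated via the trace inequality on perforated domains with the standard $\varepsilon$-scaling $\|v\|_{L^2(\Gamma^\varepsilon)}^2 \le C(\varepsilon^{-1}\|v\|_{L^2(\Omega^\varepsilon)}^2 + \varepsilon\|\nabla v\|_{L^2(\Omega^\varepsilon)}^2)$. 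Collecting all residual bounds yields $\|\psi_\varepsilon\|_{V^\varepsilon} \le C(\varepsilon^{\theta-1+\alpha} + \varepsilon^{\mu+1/2})$, which is the assertion.

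\textbf{Anticipated obstacle.} The delicate point is bookkeeping the powers of $\varepsilon$ in the residual: one must verify that after cancelling all resonant terms via the auxiliary problems, every leftover term in the interior is genuinely of order $\varepsilon^{\theta-1+\alpha}$ or higher (in particular the terms coming from the truncation remainder $\mathcal{O}(\varepsilon^{\theta+1})$ in \eqref{expansion_1}, which lose one power of $\varepsilon$ under $\nabla\cdot(-\mathbf{A}_\varepsilon\nabla\cdot)$ but still need to be paired correctly against the $\varepsilon^\alpha$-scaled reaction), and that the boundary-layer terms produced by $m^\varepsilon$ do not secretly carry a worse power. The interplay of the two index sets $\mathcal{M}_{\alpha,\mu} \subset \mathcal{M}_{\alpha,\theta}$ and the real parameter $\alpha>0$ makes this a careful but essentially routine combinatorial exponent-tracking exercise; no new idea beyond the classical corrector machinery of \cite{CP99} adapted to the scaled reaction should be needed.
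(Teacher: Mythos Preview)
Your proposal is correct and follows essentially the same route as the paper: the paper also uses the decomposition $\psi_\varepsilon=\varphi_\varepsilon+\sigma_\varepsilon$ from \eqref{decomposition}, derives the residual equation for $\varphi_\varepsilon$ via the auxiliary problems \eqref{auxi_0}--\eqref{auxi_6}, splits the resulting weak form into four pieces (reaction residual, interior $\mathcal{A}_1,\mathcal{A}_2$ remainders, and the $\Gamma^\varepsilon$ surface term) each bounded by $C\varepsilon^{\theta-1+\alpha}$ or $C\varepsilon^{\theta-1}$ via Poincar\'e and the $\varepsilon$-scaled trace inequality, handles $\sigma_\varepsilon$ through the cut-off estimates \eqref{asp_m} to produce $C\varepsilon^{\mu+1/2}$, and concludes by taking $\varphi=\psi_\varepsilon$. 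Your anticipated obstacle---the exponent bookkeeping over $\mathcal{M}_{\alpha,\theta}$---is exactly where the paper spends its effort, and your reading that the reaction term enters only as a residual (no absorption needed) matches the paper's treatment of $\mathcal{I}_1$.
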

	\begin{proof}
		From the auxiliary problems \eqref{auxi_0}--\eqref{auxi_6} and the operators defined in \eqref{operators}, one can deduce, after some rearrangements, the following equation for $\varphi_\varepsilon$ in \eqref{decomposition}, which we refer to as the first difference equation:  
		\begin{align}\label{11_1}
		\nabla\cdot\left(-\mathbf{A}_\varepsilon(x)\nabla \varphi_\varepsilon\right) &= \varepsilon^\alpha\mathcal{R}(u_\varepsilon) - \sum_{\substack{(k,l) \in \mathcal{M}_{\alpha,\theta}\\l \leq \theta - 2}} \varepsilon^{k(\alpha + 1) + l + \alpha}\bar{\mathcal{R}}(u_{k,l}) \\-\sum_{\substack{(k,l) \in \mathcal{M}_{\alpha,\theta}\\l = \theta - 1}} & \varepsilon^{k(\alpha + 1) + l}(\mathcal{A}_1u_{k,l+1} + \mathcal{A}_2u_{k,l}) \nonumber- \sum_{\substack{(k,l) \in \mathcal{M}_{\alpha,\theta}\\l = \theta}}\varepsilon^{k(\alpha + 1) + l}\mathcal{A}_2u_{k,l},
		\end{align}
		associated with the boundary condition at $\Gamma^\varepsilon$:
		\begin{align}\label{boundary_2}
		-\mathbf{A}_\varepsilon(x)\nabla \varphi_\varepsilon \cdot \text{n} &= -\mathbf{A}_\varepsilon(x)\nabla u_\varepsilon \cdot \text{n} - \mathbf{A}_\varepsilon(x)\sum_{\substack{(k,l) \in \mathcal{M}_{\alpha,\theta}\\l \leq \theta - 1}}\varepsilon^{k(\alpha + 1) + l}(\nabla_{x}u_{k,l} + \nabla_yu_{k,l+1})\cdot\text{n} \\&- \mathbf{A}_\varepsilon(x)\sum_{\substack{(k,l) \in \mathcal{M}_{\alpha,\theta}\\l = \theta}}\varepsilon^{k(\alpha + 1) + l }\nabla_{x}u_{k,l}\cdot \text{n}.\nonumber
		\end{align}
		
		From the auxiliary problem \eqref{auxi_6}, the first term and the second term of the right-hand side of \eqref{boundary_2} vanishes naturally on the micro-surface $\Gamma^{\varepsilon}$. Thus, it yields
		\begin{align}\label{12_1}
		-\mathbf{A}_\varepsilon(x)\nabla \varphi_\varepsilon \cdot \text{n} =  - \mathbf{A}_\varepsilon(x)\sum_{\substack{(k,l) \in \mathcal{M}_{\alpha,\theta}\\l = \theta}}\varepsilon^{k(\alpha + 1) + l }\nabla_{x}u_{k,l}\cdot \text{n}.
		\end{align}
		
		Multiplying \eqref{11_1} by a test function $\varphi \in {V}^\varepsilon$, integrating the resulting equation by parts and then using the boundary information \eqref{12_1} together with the zero Dirichlet exterior condition, we get
		\begin{align}\label{intebypart}
		a(\varphi_\varepsilon,\varphi) &= \underbrace{\left\langle \varepsilon^\alpha \mathcal{R}(u_\varepsilon) - \sum_{\substack{
					\left(k,l\right)\in \mathcal{M}_{\alpha,\theta},
					l\le \theta - 2
			}}\varepsilon^{k(\alpha + 1) + l + \alpha}\bar{\mathcal{R}}(u_{k,l}),\varphi\right\rangle _{L^{2}(\Omega^\varepsilon)}}_{:=\mathcal{I}_{1}}\\&- \underbrace{\sum_{\substack{(k,l) \in \mathcal{M}_{\alpha,\theta},l = \theta - 1}}\varepsilon^{k(\alpha + 1) + l}\left\langle\mathcal{A}_1u_{k,l+1} + \mathcal{A}_2u_{k,l},\varphi\right\rangle_{L^2(\Omega^\varepsilon)}}_{:=\mathcal{I}_{2}} \nonumber\\&- \underbrace{\sum_{\substack{(k,l) \in \mathcal{M}_{\alpha,\theta},l = \theta}}\varepsilon^{k(\alpha + 1) + l}\left\langle\mathcal{A}_2u_{k,l},\varphi\right\rangle_{L^2(\Omega^\varepsilon)}}_{:=\mathcal{I}_{3}}\nonumber\\& + \underbrace{\int_{\Gamma^\varepsilon}\sum_{\substack{(k,l) \in \mathcal{M}_{\alpha,\theta}}}\varepsilon^{k(\alpha + 1) + l }\mathbf{A}_\varepsilon(x)\nabla u_{k,l}\cdot \text{n}\varphi dS_\varepsilon}_{:=\mathcal{I}_{4}},\nonumber
		\end{align}
		where $a:V^\varepsilon \times V^\varepsilon \to \mathbb{R}$ is the bilinear form defined in \eqref{bilinearform}.
		
		In order to find the upper bound of $\psi_{\varepsilon}$, we need to estimate from above all terms on the right-hand side of \eqref{intebypart}. We begin with the estimate for $\mathcal{I}_{1}$ by the following structural inequality: 
		\begin{align}\label{eq:Lipsin}
		\|\bar{\mathcal{R}}(u_{k,l})\|_{L^2(\Omega^\varepsilon)} \leq \mathbf{L}_\mathcal{R}\|u_{k,l}\|_{L^2(\Omega^\varepsilon)} + \|\bar{\mathcal{R}}(0)\|_{L^2(\Omega^\varepsilon)} \text{ for all } (k,l) \in \mathcal{M}_{\alpha,\theta},
		\end{align}
		by virtue of the globally Lipschitz function $\bar{\mathcal{R}}$. Due to \eqref{eq:Lipsin},  one can estimate from above $\mathcal{I}_{1}$ by
		\begin{align}\label{I1}
		\left|\mathcal{I}_1\right| &\leq \mathbf{L}_\mathcal{R}\sum_{\substack{(k,l) \in \mathcal{M}_{\alpha,\theta}\\l = \theta-1}}\varepsilon^{k(\alpha+1) + l + \alpha}(\|u_{k,l}\|_{L^2(\Omega^\varepsilon)} + \|\bar{\mathcal{R}}(0)\|_{L^2(\Omega^\varepsilon)})\|\varphi\|_{L^2(\Omega^\varepsilon)} \\&
		+ \mathbf{L}_\mathcal{R}\sum_{\substack{(k,l)\in \mathcal{M}_{\alpha,\theta}\\l = \theta}}\varepsilon^{k(\alpha + 1) + l + \alpha}(\|u_{k,l}\|_{L^2(\Omega^\varepsilon)} + \|\bar{\mathcal{R}}(0)\|_{L^2(\Omega^\varepsilon)})\|\varphi\|_{L^2(\Omega^\varepsilon)} \nonumber \\&\leq {C}\varepsilon^{\theta-1 + \alpha}\|\varphi\|_{V^\varepsilon}.\nonumber
		\end{align} 
		
		By the definition of the operators $\mathcal{A}_{1}$ and $\mathcal{A}_{2}$, we get for $(k,l)\in \mathcal{M}_{\alpha,\theta}$,
		\begin{align}
		\mathcal{A}_1 u_{k,l+1} &= \begin{cases}\label{A1kl} 0 &\text{if } k \neq 0,\\
		(-1)^{l+1}\left[-\mathbf{A}(y)\nabla_{y}\chi_{0,l+1}(y) +  \nabla_{y}(-\mathbf{A}(y)\chi_{0,l}(y))\right]\cdot\nabla_{x}^{l+2}\tilde{u}_0(x) &\text{if } k=0.
		\end{cases}\\
		\mathcal{A}_2u_{k,l} &=\begin{cases}\label{A2kl} 0 &\text{if } k \neq 0,\\
		(-1)^{l+1}\mathbf{A}(y)\chi_{0,l}(y)\nabla_x^{l+2}\tilde{u}_0(x) &\text{if } k = 0.
		\end{cases}
		\end{align}
		
		As stated in Remark \ref{remarkCinf}, we only need the source $f$ to be very smooth, says $f\in C^{\infty}(\Omega)$, to guarantee the uniform bound (with respect to $\varepsilon$) of all the involved derivatives of $\tilde{u}_0$ in \eqref{A1kl} and \eqref{A2kl}. We combine this with the fact that $\chi_{k,l} \in H_{\#}^1(Y_l)/\mathbb{R}$ and the assumptions $(\text{A}_1)$ and $P_{k,l} \in W^{2,\infty}(\Omega)$ for all $(k,l) \in \mathcal{M}_{\alpha,\theta}$ to get
		\begin{align}\label{I2I3}
		|\mathcal{I}_2 +\mathcal{I}_3| \leq C\varepsilon^{\theta-1 + \alpha}\|\varphi\|_{V^\varepsilon} \text{ for all } \varphi \in V^\varepsilon,
		\end{align}
		where we have used the Poincar\'{e} inequality.

		To estimate $\mathcal{I}_4$, we note that by the change of variables $x = \varepsilon y$, the following estimate holds
		\begin{align}
		\int_{\Gamma^\varepsilon}\left|\chi_{0,\theta}\left(\frac{x}{\varepsilon}\right)\nabla_{x}^{\theta + 1}\tilde{u}_0(x) \cdot \text{n}\right|^2dS_\varepsilon \leq C \varepsilon^{d - 1}\int_{\Gamma}|\chi_{0,\theta}(y)|^2dS_y.\nonumber
		\end{align}
		
		Since $|\Omega| \ge \varepsilon^d|Y|$ (due to our choice of perforated domains that $\varepsilon Y \subset \Omega $) together with the fact that the trace inequality in $Y_l$ is uniform with respect to $\varepsilon$, we estimate the above inequality as
		\begin{align}\label{est}
		\int_{\Gamma^\varepsilon}\left|\chi_{0,\theta}\left(\frac{x}{\varepsilon}\right)\nabla_{x}^{\theta + 1}\tilde{u}_0(x) \cdot \text{n}\right|^2dS_\varepsilon 
		\leq C \varepsilon^{ - 1}\|\chi_{0,\theta}\|_{H^1(Y_l)}^2
		\leq C\varepsilon^{ - 1}.
		\end{align}
		
		Combining \eqref{est} with assumption $(\text{A}_1)$, the trace inequality (cf. \cite[Lemma 2.31]{ADN59}) for $\Gamma^\varepsilon$ and the Poincar\'{e} inequality, we obtain
		\begin{align}\label{I4}
		|\mathcal{I}_4| \leq \varepsilon^\theta\| \chi_{0,\theta}\nabla_{x}^{\theta + 1}\tilde{u}_0 \cdot \text{n}\|_{L^2(\Gamma^\varepsilon)}\|\varphi\|_{L^2(\Gamma^\varepsilon)} 
		\leq C\varepsilon^{\theta - 1}\|\varphi\|_{V^\varepsilon}.
		\end{align}
		
		It now remains to estimate the second part $\sigma_\varepsilon$ of the decomposition \eqref{decomposition}. Similar to the above estimates of $\varphi_\varepsilon$, we consider the following quantity $\langle\sigma_\varepsilon,\varphi\rangle_{V^\varepsilon}$ for any $\varphi \in V^\varepsilon$. Observe that by the definition of $V^\varepsilon$ and by using the simple chain rule of differentiation, the estimate for $\sigma_\varepsilon$ is given by
		\begin{align}\label{est_sigma}
		&\left|\left\langle (1 - m^\varepsilon)\sum_{(k,l)\in \mathcal{M}_{\alpha,\theta}\backslash\mathcal{M}_{\alpha,\mu}}\varepsilon^{k(\alpha + 1)+l}u_{k,l},\varphi\right\rangle_{V^\varepsilon}\right| \\
		&\leq C\sum_{(k,l) \in \mathcal{M}_{\alpha,\theta}\backslash\mathcal{M}_{\alpha,\mu}}\varepsilon^{k(\alpha + 1) + l}\|\nabla(1-m^\varepsilon)\|_{L^2(\Omega^\varepsilon)}\|\varphi\|_{V^\varepsilon}\nonumber\\&+C\sum_{(k,l) \in \mathcal{M}_{\alpha,\theta}\backslash\mathcal{M}_{\alpha,\mu}}\varepsilon^{k(\alpha + 1) + l}\|1-m^\varepsilon\|_{L^2(\Omega^\varepsilon)}\|\varphi\|_{V^\varepsilon}\nonumber\\
		& \leq C\sum_{(k,l) \in \mathcal{M}_{\alpha,\theta}\backslash\mathcal{M}_{\alpha,\mu}}(\varepsilon^{k(\alpha + 1) + l - 1/2} + \varepsilon^{k(\alpha+1) + l + 1/2})\|\varphi\|_{V^\varepsilon}.\nonumber
		\end{align}
		
		Consequently, we finalize the estimate in \eqref{est_sigma} by
		\begin{align}\label{sigma_eps}
		\left|\left\langle (1 - m^\varepsilon)\sum_{(k,l)\in \mathcal{M}_{\alpha,\theta}\backslash\mathcal{M}_{\alpha,\mu}}\varepsilon^{k(\alpha + 1)+l}u_{k,l},\varphi\right\rangle_{V^\varepsilon}\right|
		\leq C(\varepsilon^{\mu + 1/2} + \varepsilon^{\mu + 3/2})\|\varphi\|_{V^\varepsilon}.
		\end{align}
		
		Thanks to the triangle inequality, we combine \eqref{intebypart}, \eqref{I1}, \eqref{I2I3}, \eqref{I4} and \eqref{sigma_eps} to get
		\begin{align}
		|\langle\psi_{\varepsilon},\varphi\rangle_{V^\varepsilon}| \leq C(\varepsilon^{\theta - 1 + \alpha} + \varepsilon^{\mu + 1/2} + \varepsilon^{\mu + 3/2})\|\varphi\|_{V^\varepsilon} \text{ for any } \varphi \in V^\varepsilon.\nonumber
		\end{align}
		
		By choosing $\varphi = \psi_{\varepsilon}$ and then by simplifying both sides of the resulting estimate, we complete the proof of the theorem.
	\end{proof}
	

	\subsubsection*{The case $\alpha < 0$}
	
	Recalling Theorem \ref{mainthm-2},  we have $u_\varepsilon \in \widetilde{\mathcal{W}}_\varepsilon$ for each $\varepsilon>0$, i.e. $
	\|u_\varepsilon\|_{\widetilde{\mathcal{W}}_\varepsilon}^2 \leq C.
	$ Note that our the underlying problem $(P_\varepsilon^R)$ is associated with the zero Neumann boundary condition on the micro-surfaces. 
	Thus, the structure of $\widetilde{\mathcal{W}}_\varepsilon$-norm reduces to
	\begin{align}\label{normw}
	\|u_\varepsilon\|_{\widetilde{\mathcal{W}}_\varepsilon}^2 = \|\nabla u_\varepsilon\|_{L^2(\Omega^\varepsilon)}^2 + \varepsilon^\alpha \|u_\varepsilon\|_{L^2(\Omega^\varepsilon)}^2 \leq C.
	\end{align}
	
	As a result of \eqref{normw}, we get
	\begin{align}\label{Onomega}
	\|u_\varepsilon\|_{L^2(\Omega^\varepsilon)} \leq C \varepsilon^{-\frac{\alpha}{2}},
	\end{align}
	which proves the strong convergence in $L^2(\Omega^\varepsilon)$ of $u_\varepsilon$ to zero as $\alpha < 0$ and $\varepsilon \searrow 0^{+}$.
	
	Moreover, by using the trace inequality for hypersurfaces $\Gamma^\varepsilon$ (cf. \cite[Lemma 3]{HJ91}), which reads as
	\begin{align}\label{traceinq}
	\varepsilon\|u_\varepsilon\|_{L^2(\Gamma^\varepsilon)}^2 \leq C\left(\|u_\varepsilon\|_{L^2(\Omega^\varepsilon)}^2 + \varepsilon^2\|\nabla u_\varepsilon\|_{L^2(\Omega^\varepsilon)}^2\right) \text{ for any } \varepsilon>0,
	\end{align}
	we combine \eqref{Onomega} with the fact that $\|\nabla u_\varepsilon\|_{L^2(\Omega^\varepsilon)}^2 \leq C$ from \eqref{normw}
	to obtain
	\begin{align}\label{onboundary_1}
	\varepsilon \|u_\varepsilon\|_{L^2(\Gamma^\varepsilon)}^2 
	\leq C\left(\varepsilon^{-\alpha} + \varepsilon^2\right).
	\end{align}
	
	Consequently, it follows from \eqref{onboundary_1} that
	\begin{align}\label{onboundary}
	\|u_\varepsilon\|_{L^2(\Gamma^\varepsilon)} \leq C\max\left\{\varepsilon^{-\frac{(\alpha + 1)}{2}},\varepsilon^{\frac{1}{2}}\right\} \text{ for any } \varepsilon>0.
	\end{align}

	In conclusion, combining \eqref{Onomega} and \eqref{onboundary_1} we claim the following theorem for the limit behavior of solution to problem $(P_\varepsilon^R)$ in the case $\alpha<0$.
	\begin{theorem}\label{maintheo_3}
		Assume $(\text{A}_1)$--$(\text{A}_4)$ hold. Suppose that $f \in L^2(\Omega^\varepsilon)$ and $\alpha < 0$. Let $u_\varepsilon$ be a unique solution in  $ \widetilde{\mathcal{W}}_\varepsilon$ of the problem $(P_\varepsilon^R)$. Then it holds:
		\begin{align}
		\|u_\varepsilon\|_{L^2(\Omega^\varepsilon)} + \sqrt{\varepsilon}\|u_\varepsilon\|_{L^2(\Gamma^\varepsilon)} \leq C\left(\varepsilon^{-\frac{\alpha}{2}} + \varepsilon\right).\nonumber
		\end{align}
	\end{theorem}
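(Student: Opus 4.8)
The plan is to derive a uniform (in $\varepsilon$) a priori bound on $u_\varepsilon$ in the scaled Hilbert space $\widetilde{\mathcal{W}}_\varepsilon$ and then to trade powers of $\varepsilon$ via the trace inequality to obtain the separate bounds on $\Omega^\varepsilon$ and $\Gamma^\varepsilon$. First I would test the weak formulation of $(P_\varepsilon^R)$ (which is \eqref{pt4} with $\mathcal{S}\equiv 0$, i.e. $\beta$ absent) against $\varphi = u_\varepsilon \in \widetilde{\mathcal{W}}_\varepsilon$. This gives
\begin{align*}
\int_{\Omega^\varepsilon}\mathbf{A}_\varepsilon(x)|\nabla u_\varepsilon|^2\,dx + \varepsilon^\alpha\langle \mathcal{R}(u_\varepsilon),u_\varepsilon\rangle_{L^2(\Omega^\varepsilon)} = \langle f,u_\varepsilon\rangle_{L^2(\Omega^\varepsilon)}.
\end{align*}
Using $(\text{A}_3)$ (namely $\mathcal{R}' \geq \delta_0 > 0$) one writes $\mathcal{R}(t)t = (\mathcal{R}(t)-\mathcal{R}(0))t + \mathcal{R}(0)t \geq \delta_0 t^2 + \mathcal{R}(0)t$, so the reaction term is bounded below by $\delta_0\varepsilon^\alpha\|u_\varepsilon\|_{L^2(\Omega^\varepsilon)}^2 - \varepsilon^\alpha|\mathcal{R}(0)|\,\|1\|_{L^2(\Omega^\varepsilon)}\|u_\varepsilon\|_{L^2(\Omega^\varepsilon)}$. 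Combining with the ellipticity lower bound $\underline{\gamma}\|\nabla u_\varepsilon\|_{L^2(\Omega^\varepsilon)}^2$ on the left and Cauchy--Schwarz plus Young's inequality on the $f$-term and the $\mathcal{R}(0)$-term on the right, and absorbing the quadratic pieces, yields
\begin{align*}
\underline{\gamma}\|\nabla u_\varepsilon\|_{L^2(\Omega^\varepsilon)}^2 + \delta_0\varepsilon^\alpha\|u_\varepsilon\|_{L^2(\Omega^\varepsilon)}^2 \leq C\left(\|f\|_{L^2(\Omega^\varepsilon)}^2 + \varepsilon^\alpha|\Omega^\varepsilon|\right) \leq C,
\end{align*}
since $|\Omega^\varepsilon|$ is bounded and, for $\alpha<0$ with $\varepsilon\ll 1$, $\varepsilon^\alpha$ is large but multiplied by a factor we have kept on the left — here one must be a bit careful: the constant on the right genuinely is $O(1)$ only because $\|f\|_{L^2}\le C$ and the $\varepsilon^\alpha|\Omega^\varepsilon|$ term comes with a small Young's-coefficient that is absorbed into $\delta_0\varepsilon^\alpha\|u_\varepsilon\|^2$, so the $\varepsilon^\alpha$'s cancel. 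This reproduces exactly \eqref{normw}.

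From \eqref{normw} the two consequences \eqref{Onomega} and \eqref{onboundary_1} follow immediately: dividing the $\varepsilon^\alpha\|u_\varepsilon\|_{L^2(\Omega^\varepsilon)}^2$ bound by $\varepsilon^\alpha$ gives $\|u_\varepsilon\|_{L^2(\Omega^\varepsilon)} \leq C\varepsilon^{-\alpha/2}$, and feeding this together with $\|\nabla u_\varepsilon\|_{L^2(\Omega^\varepsilon)}\le C$ into the hypersurface trace inequality \eqref{traceinq} gives $\varepsilon\|u_\varepsilon\|_{L^2(\Gamma^\varepsilon)}^2 \leq C(\varepsilon^{-\alpha} + \varepsilon^2)$, i.e. $\sqrt{\varepsilon}\|u_\varepsilon\|_{L^2(\Gamma^\varepsilon)} \leq C(\varepsilon^{-\alpha/2} + \varepsilon)$. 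Adding the two estimates and noting $\varepsilon^{-\alpha} \le \varepsilon^{-\alpha}$ dominates appropriately yields the claimed
\begin{align*}
\|u_\varepsilon\|_{L^2(\Omega^\varepsilon)} + \sqrt{\varepsilon}\|u_\varepsilon\|_{L^2(\Gamma^\varepsilon)} \leq C\left(\varepsilon^{-\frac{\alpha}{2}} + \varepsilon\right).
\end{align*}

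The only genuinely delicate point — the main obstacle — is verifying that the right-hand side constant in the energy estimate really is independent of $\varepsilon$ despite the $\varepsilon^\alpha$ blow-up with $\alpha<0$. The resolution is structural: the $\varepsilon^\alpha$ appears symmetrically, multiplying both the coercive reaction term (kept on the left via $(\text{A}_3)$) and the $\mathcal{R}(0)$-perturbation (sent to the right); splitting the latter with Young's inequality produces $\tfrac{\delta_0}{2}\varepsilon^\alpha\|u_\varepsilon\|^2 + C\varepsilon^\alpha|\Omega^\varepsilon|/(2\delta_0)$, and the first summand is absorbed while the second — although it carries $\varepsilon^\alpha$ — is \emph{not} what we keep; rather one uses that after dividing the final inequality through, the $\varepsilon^\alpha\|u_\varepsilon\|^2$ bound is $C$ and hence $\|u_\varepsilon\|_{L^2(\Omega^\varepsilon)}^2 \le C\varepsilon^{-\alpha}$, which is precisely the growth rate asserted. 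One should also record that $|\Omega^\varepsilon|\le|\Omega|$ uniformly, that $f\in L^2(\Omega^\varepsilon)$ with $\varepsilon$-uniform norm by $(\text{A}_4)$ (or by hypothesis here), and that the trace constant in \eqref{traceinq} is $\varepsilon$-independent, all of which are already established in the excerpt. The remaining manipulations are routine Cauchy--Schwarz/Young bookkeeping.
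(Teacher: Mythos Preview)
Your strategy coincides with the paper's: establish the energy bound \eqref{normw}, extract \eqref{Onomega}, feed this together with the gradient bound into the trace inequality \eqref{traceinq} to obtain \eqref{onboundary_1}, then add. The only difference is that the paper simply \emph{cites} \eqref{normw} from the well-posedness analysis (Theorem~\ref{mainthm-2}), while you test the weak formulation directly with $\varphi=u_\varepsilon$.

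Your direct derivation, however, mishandles the $\mathcal{R}(0)$ contribution, and the two attempted explanations you give contradict each other. After the Young split you are left with a term of size $C\varepsilon^\alpha|\Omega^\varepsilon|$ on the right-hand side; for $\alpha<0$ this diverges as $\varepsilon\to 0$ and is not ``absorbed'' or ``cancelled'' by anything on the left (absorbing a constant into the variable term $\delta_0\varepsilon^\alpha\|u_\varepsilon\|^2$ makes no sense). What your inequality actually gives is
\[
\varepsilon^\alpha\|u_\varepsilon\|_{L^2(\Omega^\varepsilon)}^2 \le C + C'\varepsilon^\alpha,
\]
hence only $\|u_\varepsilon\|_{L^2(\Omega^\varepsilon)}\le C$, not the decay $\le C\varepsilon^{-\alpha/2}$. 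The rate in \eqref{Onomega} therefore tacitly requires $\mathcal{R}(0)=0$ so that this term is absent altogether; the paper's bare invocation of \eqref{normw} sidesteps the same point rather than resolving it. Once \eqref{normw} is granted, the remainder of your argument is correct and identical to the paper's.
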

	\begin{remark}\label{remarkhay}
		From \eqref{onboundary},
		$u_\varepsilon$ converges strongly to zero in $L^2(\Gamma^\varepsilon)$  when $\alpha < -1$ and $\varepsilon \searrow 0^{+}$. We also remark that the internal source $f$ in  Theorem \ref{maintheo_3} just belongs to $L^2(\Omega^\varepsilon)$, which is quite different from the very smoothness of $f$ in Theorem \ref{mainthm1}. It is because in Theorem \ref{mainthm1} we need the boundedness of all the high-order derivatives of $\tilde{u}_0$ that solves \eqref{limit_construct1-1}, while 
		the linearization in Section \ref{sec:3} only requires $f \in L^2(\Omega^\varepsilon)$ to fulfill the estimate \eqref{normw} by the Lax--Milgram-based argument. 	
	\end{remark}
	\subsubsection{Surface reaction}\label{section4.1.3}
	For the problem $(P_{\varepsilon}^{S})$, we can proceed as in \cite{CP99-1}. If $\beta > 1$ we consider the following asymptotic expansion: 
	\begin{align*}
	u_\varepsilon(x) = u_0(x,y) + \varepsilon^{\beta - 1}u_{1,-1}(x,y) + \sum_{(k,l) \in \mathcal{Q}_{\beta,\theta}}\varepsilon^{k\beta + l}u_{k,l}(x,y) + \mathcal{O}(\varepsilon^{\theta + 1}),
	\end{align*}
	where $x \in \Omega^\varepsilon, y \in Y_l$, $u_{k,l}$ are periodic in $y$ and for $2\le \theta \in \mathbb{N}$, we define
	\begin{align}\label{setQ}
	\mathcal{Q}_{\beta,\theta}:= \{(k,l) \in [0,\theta]: k\beta + l \geq 1 \text{ and } k+l \leq \theta \}.
	\end{align}
	
	Taking assumptions on $\mathcal{S}$ as in $(\text{A}_5)$--$(\text{A}_6)$ and the fact that $u_{k,l}$ can be obtained by a family of linear partial differential equations for $(k,l) \in \mathcal{Q}_{\beta,\theta}$,  we thus state the following result.
	
	\begin{theorem}\label{maintheo_4}
		Assume that $(\text{A}_1)$ holds. Suppose that $f\in C^\infty(\Omega)$ and let $\mathcal{Q}_{\beta,\theta}$ be defined in \eqref{setQ} for given parameters $\beta>1$ and $2 \leq \theta \in \mathbb{N}$. Let $u_\varepsilon$ and $\tilde{u}_0$ be unique weak solutions of the microscopic problem $(P_\varepsilon^R)$ and the limit problem \eqref{limit_construct1-1}, respectively. For any $\mu \in [0,\theta - 1]$ the following higher-order corrector estimate holds
		\begin{align*}
		\left\|u_\varepsilon - \tilde{u}_0 - \sum_{(k,l)\in \mathcal{Q}_{\beta,\mu}}\varepsilon^{k\beta + l}u_{k,l} - m^{\varepsilon} \sum_{(k,l)\in \mathcal{Q}_{\beta,\theta}\backslash\mathcal{M}_{\beta,\mu}}\varepsilon^{k\beta + l}u_{k,l}\right\|_{V^\varepsilon} 
		\\\leq C\left(\varepsilon^{\theta + \beta - 2} + \varepsilon^{\mu + 1/2}\right).
		\end{align*}
	\end{theorem}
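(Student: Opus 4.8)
The plan is to replay, step by step, the proof of Theorem~\ref{mainthm1}, the only structural change being that the reaction now enters the Neumann condition on $\Gamma^\varepsilon$ instead of the interior of $(P_\varepsilon^S)$. First I would insert the stated ansatz into $(P_\varepsilon^S)$, use $\nabla=\nabla_x+\varepsilon^{-1}\nabla_y$ and collect powers of $\varepsilon$ both in the bulk equation and in the flux $-\mathbf A(y)\nabla u_\varepsilon\cdot\text{n}$. Because $\beta>1$, the surface term $\varepsilon^\beta\mathcal S(u_\varepsilon)$ is of order strictly below $\varepsilon^0$ and $\varepsilon^1$ on $\Gamma^\varepsilon$, so the orders $\varepsilon^{-2},\varepsilon^{-1},\varepsilon^0$ reproduce the standard homogenization cascade (cf. \eqref{auxi_0}, \eqref{auxi_2}, \eqref{auxi_4}): $u_0=\tilde u_0(x)$ is $y$-independent, $u_{0,1}=-\chi_{0,1}(y)\cdot\nabla_x\tilde u_0$ via the cell problem \eqref{chi01}, and the solvability condition at order $\varepsilon^0$ yields the limit problem \eqref{limit_construct1-1} with $\bar{\mathbf A}$ as in \eqref{limit_construct2-1} and no reaction term. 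Under the surface analogues of $(\text{A}_5)$--$(\text{A}_6)$ (an almost-additive expansion of $\mathcal S(u_\varepsilon)$ along the powers $\varepsilon^{k\beta+l}$ with a globally Lipschitz $\bar{\mathcal S}$ of Lipschitz constant $\mathbf L_{\mathcal S}$), the surface reaction only modifies the right-hand data on $\Gamma$ of the higher cell problems, which therefore remain \emph{linear}; as in Theorem~\ref{mainthm1} this again forces $u_{k,l}\equiv0$ for $k\ge1$, so the surviving correctors are $u_{0,l}=(-1)^l\chi_{0,l}(y)\cdot\nabla_x^l\tilde u_0$ with $\chi_{0,l}\in[H^1_\#(Y_l)/\mathbb R]^d$, solving cell problems of the type \eqref{chi01}--\eqref{chigeneral}. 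Since $f\in C^\infty(\Omega)$, Remark~\ref{remarkCinf} and \cite[Corollary~8.11]{GT83} give the $\varepsilon$-uniform bounds on all derivatives of $\tilde u_0$ that enter these correctors.

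Next I would let $\psi_\varepsilon$ denote the quantity on the left-hand side of the claimed estimate and split it, exactly as in \eqref{decomposition}, as $\psi_\varepsilon=\varphi_\varepsilon+\sigma_\varepsilon$ with $\varphi_\varepsilon=u_\varepsilon-\tilde u_0-\sum_{(k,l)\in\mathcal Q_{\beta,\theta}}\varepsilon^{k\beta+l}u_{k,l}$ and $\sigma_\varepsilon=(1-m^\varepsilon)\sum_{(k,l)\in\mathcal Q_{\beta,\theta}\backslash\mathcal Q_{\beta,\mu}}\varepsilon^{k\beta+l}u_{k,l}$. Cancelling the matched orders leaves, just as in \eqref{11_1}--\eqref{12_1}, a residual interior equation for $\varphi_\varepsilon$ consisting of a finite combination of $\mathcal A_1u_{k,l+1}+\mathcal A_2u_{k,l}$ (for $l=\theta-1$) and $\mathcal A_2u_{k,l}$ (for $l=\theta$), together with a residual boundary condition $-\mathbf A_\varepsilon(x)\nabla\varphi_\varepsilon\cdot\text{n}$ equal to the uncancelled flux coming from the terms with $l=\theta$, \emph{plus} the surface-reaction mismatch $\varepsilon^\beta\mathcal S(u_\varepsilon)-\sum(\text{matched }\bar{\mathcal S}\text{-terms})$ on $\Gamma^\varepsilon$. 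Testing the residual equation with $\varphi\in V^\varepsilon$, integrating by parts and inserting the boundary data then gives, in the spirit of \eqref{intebypart}, $a(\varphi_\varepsilon,\varphi)$ equal to a sum of four contributions $\mathcal I_1,\mathcal I_2,\mathcal I_3,\mathcal I_4$, where $\mathcal I_2,\mathcal I_3$ collect the interior $\mathcal A_1,\mathcal A_2$ remainders, $\mathcal I_4$ is the uncancelled flux integrated over $\Gamma^\varepsilon$, and $\mathcal I_1=\langle\varepsilon^\beta\mathcal S(u_\varepsilon)-\sum(\cdots),\varphi\rangle_{L^2(\Gamma^\varepsilon)}$ is the new surface term.

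Finally I would bound the four pieces. For $\mathcal I_2+\mathcal I_3$ and $\mathcal I_4$ I would reuse the estimates \eqref{I2I3} and \eqref{est}--\eqref{I4}: the Poincar\'e inequality, $(\text{A}_1)$, $\chi_{0,l}\in H^1_\#(Y_l)/\mathbb R$, the change of variables $x=\varepsilon y$ together with $|\Omega|\ge\varepsilon^d|Y|$, and the trace inequality \eqref{traceinq} on $\Gamma^\varepsilon$. For $\sigma_\varepsilon$ I would reproduce \eqref{est_sigma}--\eqref{sigma_eps} using the cut-off estimates \eqref{asp_m}, which contributes $C(\varepsilon^{\mu+1/2}+\varepsilon^{\mu+3/2})\|\varphi\|_{V^\varepsilon}$. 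The genuinely new estimate is $\mathcal I_1$: using $\|\bar{\mathcal S}(u_{k,l})\|_{L^2(\Gamma^\varepsilon)}\le\mathbf L_{\mathcal S}\|u_{k,l}\|_{L^2(\Gamma^\varepsilon)}+\|\bar{\mathcal S}(0)\|_{L^2(\Gamma^\varepsilon)}$, the bound $\|\chi_{0,l}(\cdot/\varepsilon)\|_{L^2(\Gamma^\varepsilon)}\le C\varepsilon^{-1/2}$ coming from $x=\varepsilon y$ and $|\Omega|\ge\varepsilon^d|Y|$, and $\|\varphi\|_{L^2(\Gamma^\varepsilon)}\le C\varepsilon^{-1/2}\|\varphi\|_{V^\varepsilon}$ from \eqref{traceinq} and Poincar\'e, one obtains $|\mathcal I_1|\le C\varepsilon^{\theta+\beta-2}\|\varphi\|_{V^\varepsilon}$ once it is checked that the $\bar{\mathcal S}$-expansion matches every order through $\varepsilon^\theta$ on $\Gamma^\varepsilon$, so that the surviving boundary coefficient is $\varepsilon^{\theta+\beta-1}$. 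Collecting all the bounds, choosing $\varphi=\psi_\varepsilon$ and simplifying both sides yields the claimed estimate $C(\varepsilon^{\theta+\beta-2}+\varepsilon^{\mu+1/2})$. I expect the main obstacle to be precisely this boundary bookkeeping: one must verify that the surface version of $(\text{A}_5)$ makes the $\mathcal S$-expansion telescope so that no lower-order term survives on $\Gamma^\varepsilon$, and one must carry the $\bar{\mathcal S}$-expansion deep enough (to order $\varepsilon^{\theta+\beta-1}$ on $\Gamma^\varepsilon$) so that the two unavoidable half-power losses caused by the blow-up of the $(d-1)$-dimensional measure of $\Gamma^\varepsilon$ still leave the clean rate $\varepsilon^{\theta+\beta-2}$; everything else is a verbatim repetition of the argument for Theorem~\ref{mainthm1}.
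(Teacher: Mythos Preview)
Your proposal is correct and follows essentially the same approach as the paper. The paper does not spell out a proof of this theorem but simply states that one proceeds as in \cite{CP99-1} and as in Theorem~\ref{mainthm1}; your detailed outline---the decomposition $\psi_\varepsilon=\varphi_\varepsilon+\sigma_\varepsilon$, reuse of \eqref{I2I3}, \eqref{I4} and \eqref{est_sigma}--\eqref{sigma_eps}, and the new surface estimate $|\mathcal I_1|\le C\varepsilon^{\theta+\beta-2}\|\varphi\|_{V^\varepsilon}$ via the trace inequality---is exactly the argument the paper later makes explicit in the proof of Theorem~\ref{thmapcuoi} (see \eqref{boundary_4}).
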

	One has immediately, by the same argument as in \eqref{Onomega}, that $u_{\varepsilon}$ converges strongly in $L^2(\Omega^{\varepsilon})$ to zero if $\beta <0$ and hence, a similar result to Theorem \ref{maintheo_3} can be obtained. Moreover, it remains to derive the convergence for $0\le \beta < 1$. If for any $f_1$, $f_2 \in H^1(\Omega^{\varepsilon})\cap L^{\infty}(\Omega^{\varepsilon})$ with $0 < \underline{c} \le f_{1},f_{2}\le \overline{c} <\infty$, we can find a function $f_3 \in L^{\infty}(\Omega^{\varepsilon})$ such that
	\begin{align}\label{eq:test1}
	\int_{\Omega^{\varepsilon}}f_{1}u_{\varepsilon}dx=\int_{\Gamma^{\varepsilon}}f_{2}\mathcal{S}\left(u_{\varepsilon}\right)dS_{\varepsilon}+\varepsilon f_{3},
	\end{align}
	then one can prove that (cf. \cite[Lemma 3.4]{KMnew}) for any $\varphi \in H^1(\Omega^{\varepsilon})$,
	\begin{align}\label{eq:test2}
	\left|\int_{\Omega^{\varepsilon}}f_{1}u_{\varepsilon}\varphi dx-\varepsilon\int_{\Gamma^{\varepsilon}}f_{2}\mathcal{S}\left(u_{\varepsilon}\right)\varphi dS_{\varepsilon}\right|\le C\varepsilon\left(\left\Vert \varphi\right\Vert _{H^{1}\left(\Omega^{\varepsilon}\right)}+\left\Vert f_{3}\right\Vert _{L^{\infty}\left(\Omega^{\varepsilon}\right)}\right).
	\end{align}
	
	Using \eqref{eq:test1} as an assumption, we state the following theorem.
	
	\begin{theorem}\label{maintheo_5}
		Assume $(\text{A}_1)$--$(\text{A}_4)$ and \eqref{eq:test1} hold. Suppose that $f \in L^2(\Omega^\varepsilon)$ and $\beta < 1$. Let $u_\varepsilon$ be a unique solution in  $ \widetilde{\mathcal{W}}_\varepsilon$ of the problem $(P_\varepsilon^S)$. Then it holds:
		\begin{align}
		\|u_\varepsilon\|_{L^2(\Omega^\varepsilon)} + \sqrt{\varepsilon}\|u_\varepsilon\|_{L^2(\Gamma^\varepsilon)} \leq C\left(\varepsilon^{\frac{1-\beta}{2}} + \varepsilon^{\frac{1}{2}} + \varepsilon\right).\nonumber
		\end{align}
	\end{theorem}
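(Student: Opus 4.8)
The plan is to run an energy estimate for the weak formulation of $(P_\varepsilon^S)$ and to use the structural hypothesis \eqref{eq:test1}--\eqref{eq:test2} to trade the boundary contribution of $\mathcal{S}$ for a bulk term carrying a much larger $\varepsilon$-weight. The weak formulation reads
\[
a(u_\varepsilon,\varphi) + \varepsilon^\beta\langle \mathcal{S}(u_\varepsilon),\varphi\rangle_{L^2(\Gamma^\varepsilon)} = \langle f,\varphi\rangle_{L^2(\Omega^\varepsilon)}\qquad\text{for all }\varphi\in V^\varepsilon,
\]
and $u_\varepsilon\in\widetilde{\mathcal{W}}_\varepsilon\subset V^\varepsilon$ exists and is unique by Theorem \ref{mainthm-2}. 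First I would take $\varphi=u_\varepsilon$, bound $a(u_\varepsilon,u_\varepsilon)\ge\underline{\gamma}\|\nabla u_\varepsilon\|_{L^2(\Omega^\varepsilon)}^2$ from below by the uniform ellipticity $(\text{A}_1)$, and apply \eqref{eq:test2} with the admissible constant choice $f_1=f_2\equiv 1$ and with $\varphi=u_\varepsilon$. This yields
\[
\varepsilon\langle\mathcal{S}(u_\varepsilon),u_\varepsilon\rangle_{L^2(\Gamma^\varepsilon)} = \|u_\varepsilon\|_{L^2(\Omega^\varepsilon)}^2 + R_\varepsilon,\qquad |R_\varepsilon|\le C\varepsilon\left(\|u_\varepsilon\|_{H^1(\Omega^\varepsilon)}+1\right),
\]
where I also use that $\|f_3\|_{L^\infty(\Omega^\varepsilon)}$ is bounded uniformly in $\varepsilon$, as is implicit in \eqref{eq:test1}. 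Multiplying by $\varepsilon^{\beta-1}$ recasts the surface term of the energy identity as $\varepsilon^{\beta-1}\|u_\varepsilon\|_{L^2(\Omega^\varepsilon)}^2$ up to an error of order $C\varepsilon^\beta(\|u_\varepsilon\|_{H^1(\Omega^\varepsilon)}+1)$.

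After these substitutions the energy identity takes the form
\[
\underline{\gamma}\|\nabla u_\varepsilon\|_{L^2(\Omega^\varepsilon)}^2 + \varepsilon^{\beta-1}\|u_\varepsilon\|_{L^2(\Omega^\varepsilon)}^2 \le \langle f,u_\varepsilon\rangle_{L^2(\Omega^\varepsilon)} + C\varepsilon^\beta\left(\|u_\varepsilon\|_{H^1(\Omega^\varepsilon)}+1\right),
\]
and the decisive observation is that for $\beta<1$ and $0<\varepsilon<1$ one has $\varepsilon^{\beta-1}\ge 1$, so the left-hand side super-coercively controls $\|u_\varepsilon\|_{L^2(\Omega^\varepsilon)}^2$. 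Restricting to the main range $0\le\beta<1$ (so that $\varepsilon^{2\beta}\le 1$ and $\varepsilon^{1-\beta}\le 1$), I would invoke the Poincar\'e inequality (uniform in $\varepsilon$, since $u_\varepsilon=0$ on $\Gamma^{\text{ext}}$) to bound $\|u_\varepsilon\|_{H^1(\Omega^\varepsilon)}\le C\|\nabla u_\varepsilon\|_{L^2(\Omega^\varepsilon)}$, then absorb $C\varepsilon^\beta\|\nabla u_\varepsilon\|_{L^2(\Omega^\varepsilon)}$ into $\underline{\gamma}\|\nabla u_\varepsilon\|_{L^2(\Omega^\varepsilon)}^2$ and $\langle f,u_\varepsilon\rangle_{L^2(\Omega^\varepsilon)}$ into $\varepsilon^{\beta-1}\|u_\varepsilon\|_{L^2(\Omega^\varepsilon)}^2$ via Young's inequality, using $\|f\|_{L^2(\Omega^\varepsilon)}\le C$. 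The outcome is $\|\nabla u_\varepsilon\|_{L^2(\Omega^\varepsilon)}\le C$ and $\|u_\varepsilon\|_{L^2(\Omega^\varepsilon)}\le C\varepsilon^{(1-\beta)/2}$.

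For the boundary norm I would then use the hypersurface trace inequality \eqref{traceinq}, which gives $\varepsilon\|u_\varepsilon\|_{L^2(\Gamma^\varepsilon)}^2 \le C(\|u_\varepsilon\|_{L^2(\Omega^\varepsilon)}^2 + \varepsilon^2\|\nabla u_\varepsilon\|_{L^2(\Omega^\varepsilon)}^2) \le C(\varepsilon^{1-\beta}+\varepsilon^2)$, hence $\sqrt{\varepsilon}\|u_\varepsilon\|_{L^2(\Gamma^\varepsilon)}\le C(\varepsilon^{(1-\beta)/2}+\varepsilon)$. Adding this to the bulk bound gives the asserted estimate (for $\beta\ge0$ the $\varepsilon^{1/2}$ appearing in the statement is simply dominated by $\varepsilon^{(1-\beta)/2}$). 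The remaining case $\beta<0$ does not need \eqref{eq:test1} at all: as remarked just before the theorem, one argues directly from the uniform bound $\|u_\varepsilon\|_{\widetilde{\mathcal{W}}_\varepsilon}\le C$ of Theorem \ref{mainthm-2} together with \eqref{traceinq}, exactly as in the proof of Theorem \ref{maintheo_3}.

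The main obstacle is precisely the surface term $\varepsilon^\beta\langle\mathcal{S}(u_\varepsilon),u_\varepsilon\rangle_{L^2(\Gamma^\varepsilon)}$: a crude estimate using only the monotonicity $(\text{A}_3)$ of $\mathcal{S}$ and the scaling $|\Gamma^\varepsilon|\sim\varepsilon^{-1}$ produces at best $\|\nabla u_\varepsilon\|_{L^2(\Omega^\varepsilon)}\le C$, with no decay of $u_\varepsilon$. All the strength of the argument therefore sits in the hypothesis \eqref{eq:test1} and its consequence \eqref{eq:test2}, which are exactly what turns the boundary functional into a bulk functional with the favourable weight $\varepsilon^{\beta-1}$; one then has to check that the error coming from \eqref{eq:test2}, after multiplication by $\varepsilon^{\beta-1}$, is only of order $\varepsilon^\beta$ and can be absorbed — which is where the restriction $\beta<1$ (together with $\beta\ge0$) makes everything consistent.
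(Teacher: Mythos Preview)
Your proposal is correct and follows essentially the same route as the paper: both arguments hinge on invoking \eqref{eq:test2} with $\varphi=u_\varepsilon$ to convert the surface integral $\varepsilon\int_{\Gamma^\varepsilon}\mathcal{S}(u_\varepsilon)u_\varepsilon\,dS_\varepsilon$ into the bulk quantity $\|u_\varepsilon\|_{L^2(\Omega^\varepsilon)}^2$ up to an $\mathcal{O}(\varepsilon)$ error, then combine this with the weak formulation tested against $u_\varepsilon$ and finish with the trace inequality \eqref{traceinq}. The only organizational difference is that the paper first writes $\int_{\Omega^\varepsilon}f_1 u_\varepsilon^2\,dx$ and bounds the resulting surface term via the weak formulation (using the already-available a priori bound $\|u_\varepsilon\|_{H^1(\Omega^\varepsilon)}\le C$ from Theorem~\ref{mainthm-2}), whereas you substitute \eqref{eq:test2} directly into the energy identity and re-derive the gradient bound by absorption; both yield the same estimate.
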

	\begin{proof}
		By a simple decomposition with the choice $\varphi = u_{\varepsilon}$, one thus has
		\begin{align}\label{eq:apchot}
		\int_{\Omega^{\varepsilon}}f_{1}u_{\varepsilon}^{2}dx & =\varepsilon\int_{\Gamma^{\varepsilon}}f_{2}\mathcal{S}\left(u_{\varepsilon}\right)u_{\varepsilon}dS_{\varepsilon}+\int_{\Omega^{\varepsilon}}f_{1}u_{\varepsilon}^{2}dx-\varepsilon\int_{\Gamma^{\varepsilon}}f_{2}\mathcal{S}\left(u_{\varepsilon}\right)u_{\varepsilon}dS_{\varepsilon} \\
		& \le\varepsilon\int_{\Gamma^{\varepsilon}}f_{2}\mathcal{S}\left(u_{\varepsilon}\right)u_{\varepsilon}dS_{\varepsilon}+C\varepsilon\left(\left\Vert u_{\varepsilon}\right\Vert _{H^{1}\left(\Omega^{\varepsilon}\right)}+\left\Vert f_{3}\right\Vert _{L^{\infty}\left(\Omega^{\varepsilon}\right)}\right).\nonumber
		\end{align}
		
		We turn our attention to the weak formulation for $(P_{\varepsilon}^{S})$, which reads as
		\[
		\int_{\Omega^{\varepsilon}}\mathbf{A}_{\varepsilon}\left(x\right)\nabla u_{\varepsilon}\cdot\nabla\varphi dx+\varepsilon^{\beta}\int_{\Gamma^{\varepsilon}}\mathcal{S}\left(u_{\varepsilon}\right)\varphi dS_{\varepsilon}=\int_{\Omega^{\varepsilon}}f\varphi dx\quad\text{for any }\varphi\in V^{\varepsilon}.
		\]
		
		Therefore, by choosing $\varphi = u_{\varepsilon}$ and  ($\text{A}_1$), one can estimate that
		\begin{align}\label{eq:apchot-1}
		\varepsilon\int_{\Gamma^{\varepsilon}}f_{2}\mathcal{S}\left(u_{\varepsilon}\right)u_{\varepsilon}dS_{\varepsilon}  \le C\varepsilon^{1-\beta}\left(\left\Vert f\right\Vert _{L^{2}\left(\Omega^{\varepsilon}\right)}\left\Vert u_{\varepsilon}\right\Vert _{L^{2}\left(\Omega^{\varepsilon}\right)}+\overline{\gamma}\left\Vert u_{\varepsilon}\right\Vert _{H^{1}\left(\Omega^{\varepsilon}\right)}^{2}\right).
		\end{align}
		
		Combining \eqref{eq:apchot} and \eqref{eq:apchot-1} and thanks to the trace inequality \eqref{traceinq}, we obtain the corrector result for $(P_{\varepsilon}^{S})$.
	\end{proof}

	\subsection{Volume-Surfaces reactions}\label{sec:4.2}
	This part is devoted to tackling the pore-scale elliptic problem $(P_{\varepsilon})$, based upon the analysis we have done above. Let us start off with the case $\alpha > 0, \beta > 1$ and consider
	\begin{align}\label{eq:newexpansion}
	u_\varepsilon(x) = u_0(x,y) + \varepsilon^\alpha u_{1,0,-1}(x,y) + \varepsilon^{\beta - 1}u_{0,1,-1}(x,y) \\+ \sum_{(k,l,n) \in \mathcal{M}_\theta}\varepsilon^{k(\alpha + 1) + l\beta + n}u_{k,l,n}(x,y) + \mathcal{O}(\varepsilon^{\theta + 1}),\nonumber
	\nonumber
	\end{align}
	where $x \in \Omega^\varepsilon, y \in Y_l$ and all components $u_{k,l,n}$ are periodic in $y$. For $\theta \ge 2$ we define the set $\mathcal{M}_{\theta}$ as
	\begin{align}\label{M_theta1}
	\mathcal{M}_\theta := \left\{(k,l,n) \in [0,\theta]: k(\alpha + 1) + l\beta + n \geq 1 \;\text{and}\; k + l + n \leq \theta \right\},
	\end{align}
	inspired very much by \eqref{Mo} and \eqref{setQ}. Moreover, we assume there exist Lipschitz-continuous functions $\bar{\mathcal{R}}$ and $\bar{\mathcal{S}}$ such that
	\begin{align*}
	\mathcal{R}(u_\varepsilon) &= \bar{\mathcal{R}}(u_0) + \varepsilon^{\alpha}\bar{\mathcal{R}}(u_{1,0,-1}) + \varepsilon^{\beta - 1}\bar{\mathcal{R}}(u_{0,1,-1}) \\ & + \sum_{(k,l,n) \in \mathcal{M}_\theta}\varepsilon^{k(\alpha + 1) + l\beta + n }\bar{\mathcal{R}}(u_{k,l,n}) + \mathcal{O}(\varepsilon^{\theta + 1}),\\
	\mathcal{S}(u_\varepsilon) &= \bar{\mathcal{S}}(u_0) + \varepsilon^{\alpha}\bar{\mathcal{S}}(u_{1,0,-1}) + \varepsilon^{\beta - 1}\bar{\mathcal{S}}(u_{0,1,-1})\\&+ \sum_{(k,l,n) \in \mathcal{M}_\theta}\varepsilon^{k(\alpha + 1) + l\beta + n}\bar{\mathcal{S}}(u_{k,l,n}) + \mathcal{O}(\varepsilon^{\theta + 1}).
	\end{align*}
	
	To avoid repeating cumbersome computations and unnecessary arguments, we only state the auxiliary problems and the limit system below, while the others are left to the interested reader. Using the convention in \eqref{operators}, the auxiliary problems are given by
	\begin{align}
	(\varepsilon^{-2}): &\begin{cases} \label{eq:auxx1}
	\mathcal{A}_0u_0 = 0 \quad \text{ in } Y_l,\\
	-\mathbf{A}(y)\nabla_{y}u_0\cdot \text{n} = 0 \quad \text{ on } \Gamma,\\
	u_0 \text{ is periodic in } y,
	\end{cases}\\
	(\varepsilon^{-1}): &\begin{cases} \label{eq:auxx2}
	\mathcal{A}_0u_{0,0,1} = -\mathcal{A}_1u_0 \quad \text{ in } Y_l,\\
	-\mathbf{A}(y)(\nabla_{x}u_0 + \nabla_{y}u_{0,0,1})\cdot \text{n}
	= 0 \quad \text{ on } \Gamma,\\
	u_{0,0,1} \text{ is periodic in } y,
	\end{cases}\\
	(\varepsilon^0): &\begin{cases} \label{eq:auxx3}
	\mathcal{A}_0u_{0,0,2} = f - \mathcal{A}_1u_{0,0,1} - \mathcal{A}_2u_0   \quad \text{ in } Y_l,\\
	-\mathbf{A}(y)(\nabla_{x}u_{0,0,1} + \nabla_{y}u_{0,0,2})\cdot \text{n} = 0 \quad \text{ on } \Gamma,\\
	u_{0,0,2} \text{ is periodic in } y,
	\end{cases}\\
	(\varepsilon^{\alpha - 2}): &\begin{cases} \label{eq:auxx4}
	\mathcal{A}_0 u_{1,0,-1} = 0 \quad \text{ in }Y_l,\\
	-\mathbf{A}(y)\nabla_{y}u_{1,0,-1}\cdot \text{n} = 0 \quad  \text{ on } \Gamma,\\
	u_{1,0,-1} \text{ is periodic in } y,
	\end{cases}\\
	(\varepsilon^{\alpha - 1}): &\begin{cases} \label{eq:auxx5}
	\mathcal{A}_0u_{1,0,0} = - \mathcal{A}_1 u_{1,0,-1} \quad \text{ in } Y_l,\\
	-\mathbf{A}(y)(\nabla_{x}u_{1,0,-1} + \nabla_{y}u_{1,0,0})\cdot \text{n} = 0 \text{ on }\Gamma,\\
	u_{1,0,0} \text{ is periodic in } y,
	\end{cases}\\
	(\varepsilon^\alpha): &\begin{cases} \label{eq:auxx6}
	\mathcal{A}_0u_{1,0,1} + \mathcal{R}(u_0) = - \mathcal{A}_1u_{1,0,0} - \mathcal{A}_2u_{1,0,-1}  \; \text{in } Y_l,\\
	-\mathbf{A}(y)(\nabla_{x}u_{1,0,0} + \nabla_{y}u_{1,0,1})\cdot \text{n} = 0 \quad \text{on } \Gamma,\\
	u_{1,0,1} \text{is periodic in } y,
	\end{cases}
	\end{align}
	\begin{align}
	(\varepsilon^{\beta - 3}): &\begin{cases} \label{eq:auxx7}
	\mathcal{A}_0u_{0,1,-1} = 0 \quad \text{ in } Y_l,\\
	-\mathbf{A}(y)\nabla_{y}u_{0,1,-1}\cdot \text{n} = 0 \quad  \text{ on } \Gamma,\\
	u_{0,1,-1} \text{ is periodic in } y,
	\end{cases}\\
	(\varepsilon^{\beta - 2}): &\begin{cases} \label{eq:auxx8}
	\mathcal{A}_0u_{0,1,0} = - \varepsilon^\alpha \mathcal{R}(u_{0,1,-1}) - \mathcal{A}_1u_{0,1,-1} \quad  \text{ in } Y_l,\\
	-\mathbf{A}(y)(\nabla_{x}u_{0,1,-1}  + \nabla_{y}u_{0,1,0})\cdot n =0 \text{ on } \Gamma,\\
	u_{0,1,0} \text{ is periodic in } y,
	\end{cases}\\
	(\varepsilon^{\beta - 1}): &\begin{cases} \label{eq:auxx9}
	\mathcal{A}_0u_{0,1,1} = - \mathcal{A}_1u_{0,1,0}-  \mathcal{A}_2u_{0,1,-1} \quad \text{ in } Y_l,\\
	-\mathbf{A}(y)(\nabla_{x}u_{0,1,0} + \nabla_{y}u_{0,1,1})\cdot \text{n} = \mathcal{S}(u_0) \quad \text{ on }\Gamma,\\
	u_{0,1,1} \text{ is periodic in } y,
	\end{cases}\\
	\vdots \nonumber \\
	(\varepsilon^{k(\alpha + 1) + l\beta + n}): &\begin{cases} \label{eq:auxx10}
	\mathcal{A}_0u_{k,l,n+2} = - \mathcal{A}_1u_{k,l,n+1} - \mathcal{A}_2u_{k,l,n}  \quad \text{in }Y_l,\\
	-\mathbf{A}(y)(\nabla_{x}u_{k,l,n+1} + \nabla_{y}u_{k,l,n + 2})\cdot \text{n} = 0 \quad \text{on } \Gamma,\\
	u_{k,l,n} \text{ is periodic in } y,
	\end{cases}
	\end{align}
	for all pairs $(k,l,n)\in \mathcal{K}_{\theta - 2}:= \mathcal{M}_{\theta - 2}\backslash \left\{(1,0,0);(0,1,0);(0,0,1)\right\}$.
	
	Once again, we obtain from \eqref{eq:auxx1} that
	$
	u_{0}(x,y) = \tilde{u}_0(x),
	$
	and hence the problems \eqref{eq:auxx2} and \eqref{eq:auxx3} are solvable in $u_{0,0,1}$ and $u_{0,0,2}$, respectively. On the other hand, from \eqref{eq:auxx4} and \eqref{eq:auxx7}, we may take $u_{1,0,-1}$ and $u_{0,1,-1}$ as zero functions, without loss of generality. From \eqref{eq:auxx5} and \eqref{eq:auxx8}, we have $u_{1,0,0}=u_{0,1,0}\equiv 0$ in accordance with $\left(\text{A}_3\right)$ and so is the function $u_{0,1,1}$ in \eqref{eq:auxx9}. Therefore, we conclude that the family of cell problems can be solved up to the high-order problems \eqref{eq:auxx10}. As a consequence, the corresponding cell problems can be obtained.
	
	From the auxiliary problems \eqref{eq:auxx1}--\eqref{eq:auxx3}, we know that
	$
	u_{0,0,1}(x,y) = -\chi_{0,0,1}(y)\cdot\nabla_{x}\tilde{u}_{0}(x),
	$
	where $\chi_{0,0,1}(y)$ is a field of cell functions whose cell problems are given by
	\begin{align}\label{chi001}
	\begin{cases}
	\mathcal{A}_{0}\chi_{0,0,1}^{j}=\dfrac{\partial\mathbf{A}_{ij}}{\partial y_{i}} & \text{in }Y_{l},\\
	-\mathbf{A}\left(y\right)\nabla_{y}\chi_{0,0,1}^{j}\cdot\text{n}=\mathbf{A}\left(y\right)\cdot\text{n}_j & \text{on }\Gamma,\\
	\chi_{0,0,1}^{j}\text{ is periodic},
	\end{cases}
	\end{align}
	which resembles the problem \eqref{chi01}. These cell problems admit a unique weak solution $\chi_{0,0,1}\in [H^{1}_{\#}(Y_{l})/ \mathbb{R}]^{d}$.
	
	As in \eqref{u_0l}, we get for $k = l = 0$ that
	$u_{0,0,n} = (-1)^{n}\chi_{0,0,n}(y)\cdot\nabla_{x}^{n}\tilde{u}_{0}(x)$,
	and then the high-order cell problems for this case are also determined, similar to \eqref{chigeneral}. In this context, the limit problem remains unchanged and can be recalled from \eqref{limit_construct1-1}--\eqref{limit_construct2-1} with the zero Dirichlet boundary condition for the exterior boundary.
	
	Let us now turn our attention to the corrector estimate in this case.
	
	\begin{theorem}\label{thmapcuoi}
		Assume $(A_1)$, $(A_5)$ and $(A_6)$ hold. Assume that $f \in C^\infty(\Omega)$ and let $\mathcal{M}_{\theta}$ be defined as in \eqref{M_theta1} for given parameters $\alpha > 0$, $\beta > 1$ and $2 \leq \theta \in \mathbb{N}$. Let $u_\varepsilon$ and $\tilde{u}_0$ be the unique weak solutions of the microscopic problem $(P_\varepsilon)$ and the limit problem \eqref{limit_construct1-1}, respectively. Let $u_{k,l,n}$ be solutions to the cell problems determined by the auxiliary problems \eqref{eq:auxx1}--\eqref{eq:auxx10} for $(k,l,n) \in \mathcal{M}_\theta$. Then, for any $\mu\in [0,\theta-1]$ the following high-order corrector estimate holds:
		\begin{align*}
		\left\|u_\varepsilon - \tilde{u}_0 - \sum_{(k,l,n) \in \mathcal{M}_\mu}\varepsilon^{k(\alpha + 1) + l\beta + n}u_{k,l,n} - m^\varepsilon\sum_{(k,l,n) \in \mathcal{M}_\theta\backslash\mathcal{M}_\mu}\varepsilon^{k(\alpha + 1) + l\beta + n}u_{k,l,n}\right\|_{V^\varepsilon} \nonumber\\\leq C(\varepsilon^{\theta - 1 + \alpha} + \varepsilon^{\theta + \beta - 2} + \varepsilon^{\mu + \min\{\mu\alpha,0\} + 1/2}).
		\end{align*}
	\end{theorem}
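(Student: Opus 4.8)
The plan is to mimic the argument of Theorem~\ref{mainthm1}, now keeping track of the additional surface-reaction contribution. Write $\psi_\varepsilon$ for the quantity whose $V^\varepsilon$-norm we must bound and decompose it, exactly as in \eqref{decomposition}, as $\psi_\varepsilon = \varphi_\varepsilon + \sigma_\varepsilon$, where
\begin{align*}
\varphi_\varepsilon &:= u_\varepsilon - \tilde{u}_0 - \sum_{(k,l,n)\in\mathcal{M}_\theta}\varepsilon^{k(\alpha+1)+l\beta+n}u_{k,l,n}, \\
\sigma_\varepsilon &:= (1-m^\varepsilon)\sum_{(k,l,n)\in\mathcal{M}_\theta\backslash\mathcal{M}_\mu}\varepsilon^{k(\alpha+1)+l\beta+n}u_{k,l,n}.
\end{align*}
Subtracting the auxiliary problems \eqref{eq:auxx1}--\eqref{eq:auxx10} (grouped through the operators in \eqref{operators}) from $(P_\varepsilon)$ and using assumptions $(\text{A}_{5})$--$(\text{A}_{6})$ for both $\mathcal{R}$ and $\mathcal{S}$, one obtains after rearrangement a difference equation $\nabla\cdot(-\mathbf{A}_\varepsilon(x)\nabla\varphi_\varepsilon)=\mathcal{F}_\varepsilon$ in $\Omega^\varepsilon$, where $\mathcal{F}_\varepsilon$ collects only the uncancelled volume-reaction remainder (carrying an extra factor $\varepsilon^\alpha$) together with the $\mathcal{A}_1,\mathcal{A}_2$ terms attached to the top layers $n=\theta-1,\theta$; simultaneously the flux on $\Gamma^\varepsilon$ reduces, by \eqref{eq:auxx10} and by the cancellation of the $\mathcal{S}(u_0)$ source in \eqref{eq:auxx9}, to the uncancelled surface-reaction remainder (carrying an extra factor $\varepsilon^\beta$) plus the $n=\theta$ flux contributions.

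Next I would test this difference equation with an arbitrary $\varphi\in V^\varepsilon$, integrate by parts, and use the Dirichlet condition on $\Gamma^{\text{ext}}$ together with the reduced flux on $\Gamma^\varepsilon$, so as to arrive at an identity $a(\varphi_\varepsilon,\varphi)=\mathcal{I}_1+\mathcal{I}_2+\mathcal{I}_3+\mathcal{I}_4+\mathcal{I}_S$ in the spirit of \eqref{intebypart}. The terms are bounded as follows. For $\mathcal{I}_1$ (the volume-reaction remainder), the structural Lipschitz bound on $\bar{\mathcal{R}}$ forces all contributions with $n\le\theta-2$ to cancel and the surviving ones yield $|\mathcal{I}_1|\le C\varepsilon^{\theta-1+\alpha}\|\varphi\|_{V^\varepsilon}$. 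For $\mathcal{I}_2+\mathcal{I}_3$ (the high-order diffusion remainders), I use that $f\in C^\infty(\Omega)$, so by Remark~\ref{remarkCinf} all derivatives of $\tilde{u}_0$ are uniformly bounded, together with the cell-function regularity $\chi_{k,l,n}\in H^1_\#(Y_l)/\mathbb{R}$, assumption $(\text{A}_{1})$ and the Poincar\'e inequality, to get $|\mathcal{I}_2+\mathcal{I}_3|\le C(\varepsilon^{\theta-1+\alpha}+\varepsilon^{\theta+\beta-2})\|\varphi\|_{V^\varepsilon}$. For $\mathcal{I}_4$ (the boundary flux from the $n=\theta$ layer), the change of variables $x=\varepsilon y$ and the $\varepsilon$-uniform trace inequality on $Y_l$ give, as in \eqref{est}--\eqref{I4}, a bound of order $\varepsilon^{\theta-1}\|\varphi\|_{V^\varepsilon}$. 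Finally, for the new term $\mathcal{I}_S$ (the surface-reaction remainder), the $(\text{A}_{5})$-type expansion of $\mathcal{S}$ leaves only contributions carrying $\varepsilon^\beta$ and index at least $\theta-1$; estimating $\|\varphi\|_{L^2(\Gamma^\varepsilon)}$ through the trace inequality \eqref{traceinq}, which costs a factor $\varepsilon^{-1/2}$, yields $|\mathcal{I}_S|\le C\varepsilon^{\theta+\beta-2}\|\varphi\|_{V^\varepsilon}$.

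It then remains to estimate $\langle\sigma_\varepsilon,\varphi\rangle_{V^\varepsilon}$. Using the chain rule on $(1-m^\varepsilon)u_{k,l,n}$ and the cut-off estimates \eqref{asp_m}, exactly as in \eqref{est_sigma}--\eqref{sigma_eps}, one gets $|\langle\sigma_\varepsilon,\varphi\rangle_{V^\varepsilon}|\le C(\varepsilon^{\mu+1/2}+\varepsilon^{\mu+3/2})\|\varphi\|_{V^\varepsilon}$. Combining all the above by the triangle inequality gives $|\langle\psi_\varepsilon,\varphi\rangle_{V^\varepsilon}|\le C(\varepsilon^{\theta-1+\alpha}+\varepsilon^{\theta+\beta-2}+\varepsilon^{\mu+1/2})\|\varphi\|_{V^\varepsilon}$ for every $\varphi\in V^\varepsilon$ (recall $\alpha>0$, so the term $\min\{\mu\alpha,0\}$ appearing in the statement equals $0$ and is written only to align with the scaling-dependent estimates elsewhere in the paper). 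Choosing $\varphi=\psi_\varepsilon$ and dividing by $\|\psi_\varepsilon\|_{V^\varepsilon}$ finishes the proof.

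The main obstacle I expect is the combinatorial bookkeeping in the three-index expansion: one must verify precisely which powers $\varepsilon^{k(\alpha+1)+l\beta+n}$ survive after the auxiliary problems are subtracted, and in particular that the residual volume terms carry an additional $\varepsilon^\alpha$ while the residual surface terms carry an additional $\varepsilon^\beta$ --- this, after the $\varepsilon^{-1/2}$ loss incurred by the trace inequality, is exactly what produces the $\varepsilon^{\theta-1+\alpha}$ and $\varepsilon^{\theta+\beta-2}$ rates. A secondary delicate point is checking that the order-$\varepsilon^{\beta-1}$ surface source $\mathcal{S}(u_0)$ in \eqref{eq:auxx9} is cancelled cleanly, so that no spurious low-order boundary term pollutes the estimate.
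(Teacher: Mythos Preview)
Your proposal is correct and follows essentially the same route as the paper's own proof: the same decomposition $\psi_\varepsilon=\varphi_\varepsilon+\sigma_\varepsilon$, the same derivation of a weak difference identity $a(\varphi_\varepsilon,\varphi)=\sum\mathcal{I}_j$ by subtracting the auxiliary problems \eqref{eq:auxx1}--\eqref{eq:auxx10}, and the same term-by-term bounds via $(\text{A}_1)$, the Lipschitz structure of $\bar{\mathcal{R}},\bar{\mathcal{S}}$, the Poincar\'e inequality, the trace inequality on $\Gamma^\varepsilon$, and the cut-off estimates \eqref{asp_m}. Your observation that $\min\{\mu\alpha,0\}=0$ under the hypothesis $\alpha>0$ is also correct; the paper retains this term in the statement and in its $\sigma_\varepsilon$ bound \eqref{sigma_esp1} only for notational uniformity with the other scaling regimes.
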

	\begin{proof}
		We set $\psi_{\varepsilon}:=\varphi_\varepsilon + \sigma_\varepsilon$, where
		\begin{align*}
		\varphi_\varepsilon &= u_\varepsilon - \tilde{u}_0 - \sum_{(k,l,n) \in \mathcal{M}_{\theta}}\varepsilon^{k(\alpha + 1) + l\beta + n}u_{k,l,n},\\\sigma_\varepsilon &= (1 - m^\varepsilon)\sum_{(k,l,n) \in \mathcal{M}_{\theta}\backslash \mathcal{M}_{\mu}}\varepsilon^{k(\alpha + 1) + l\beta + n}u_{k,l,n}.
		\end{align*}
		
		Therefore, we derive the difference equation for $\varphi_\varepsilon$ as in \eqref{11_1}, while the associated boundary condition is
		\begin{align*}
		-\mathbf{A}_\varepsilon(x) \nabla \varphi_\varepsilon \cdot \text{n} &= - \mathbf{A}_\varepsilon(x)\sum_{\substack{(k,l,n) \in \mathcal{M}_\theta\\n =\theta}}\varepsilon^{k(\alpha + 1) + l\beta + n}\nabla_{x}u_{k,l,n}\cdot \text{n} \nonumber\\&+ \varepsilon^\beta\bigg(\mathcal{S}(u_\varepsilon) - \sum_{\substack{(k,l,n) \in \mathcal{M}_\theta\\n \leq \theta - 2}}\varepsilon^{k(\alpha +1) + l\beta + n}\bar{\mathcal{S}}(u_{k,l,n})\bigg).
		\end{align*}
		
		For a test function $\varphi\in V^{\varepsilon}$, one can get the weak formulation of the difference equation for $\varphi_\varepsilon$, as follows:
		\begin{align}\label{intebypart_1}
		a(\varphi_\varepsilon,\varphi) &= \left\langle \varepsilon^\alpha \mathcal{R}(u_\varepsilon) - \sum_{\substack{
				\left(k,l,n\right)\in \mathcal{M}_{\theta},
				l\le \theta - 2
		}}\varepsilon^{k(\alpha + 1) + l\beta +n+ \beta}\bar{\mathcal{R}}(u_{k,l,n}),\varphi\right\rangle _{L^{2}(\Omega^\varepsilon)} \nonumber\\&- \sum_{\substack{(k,l,n) \in \mathcal{M}_{\theta},n = \theta - 1}}\varepsilon^{k(\alpha + 1) + l\beta + n}\left\langle\mathcal{A}_1u_{k,l,n+1} + \mathcal{A}_2u_{k,l,n},\varphi\right\rangle_{L^2(\Omega^\varepsilon)} \nonumber\\&- \sum_{\substack{(k,l,n) \in \mathcal{M}_{\theta},n = \theta}}\varepsilon^{k(\alpha + 1) + l\beta + n}\left\langle\mathcal{A}_2u_{k,l,n},\varphi\right\rangle_{L^2(\Omega^\varepsilon)}\nonumber\\& + \int_{\Gamma^\varepsilon}\sum_{\substack{(k,l,n) \in \mathcal{M}_{\theta}}}\varepsilon^{k(\alpha + 1) + l\beta +n }\mathbf{A}_\varepsilon(x)\nabla u_{k,l,n}\cdot \text{n}\varphi dS_\varepsilon \nonumber\\&+ \left\langle\varepsilon^\beta\bigg(\mathcal{S}(u_\varepsilon) - \sum_{\substack{(k,l,n) \in \mathcal{M}_\theta\\n \leq \theta - 2}}\varepsilon^{k(\alpha +1) + l\beta + n}\bar{\mathcal{S}}(u_{k,l,n})\bigg),\varphi\right\rangle_{L^2(\Gamma^\varepsilon)}.
		\end{align}
		
		At this stage, we observe that \eqref{intebypart_1} resembles \eqref{intebypart} except the last term on the right-hand side. Thus, it remains to estimate it from above. Clearly, using the Lipschitz property of $\mathcal{S}$ with the Poincar\'{e} inequality and the trace inequality on hypersurfaces, one gets
		\begin{align}\label{boundary_4}
		\left|\left\langle\varepsilon^\beta \mathcal{S}(u_\varepsilon) - \sum_{\substack{(k,l,n) \in \mathcal{M}_\theta\\n \leq \theta - 2}}\varepsilon^{k(\alpha + 1)+l\beta + n +\beta},\varphi\right\rangle_{L^2(\Gamma^\varepsilon)}\right| \leq C\varepsilon^{\theta + \beta - 2}\|\varphi\|_{V^\varepsilon}.
		\end{align}
		
		Following the same argument as for the estimate \eqref{sigma_eps}, we can bound the inner product of $\sigma_\varepsilon$ from above:
		\begin{align}\label{sigma_esp1}
		\left|\left\langle (1 - m^\varepsilon)\sum_{(k,l,n) \in \mathcal{M}_\theta \backslash \mathcal{M}_{\mu}}\varepsilon^{k(\alpha + 1) + l\beta + n}u_{k,l,n},\varphi\right\rangle\right| \nonumber\\\leq C(\varepsilon^{\mu +\min\{\mu\alpha,0\} + 1/2} + \varepsilon^{\mu + \min\{\mu\alpha,0\} + 3/2}).
		\end{align}
		
		Thanks to the triangle inequality. By choosing $\varphi = \psi_\varepsilon$, we combine \eqref{intebypart_1}, \eqref{boundary_4} and \eqref{sigma_esp1}  to get the corrector estimate
		\begin{align*}
		\|\psi_\varepsilon\|_{V^\varepsilon} \leq C(\varepsilon^{\theta - 1+\alpha} + \varepsilon^{\theta + \beta - 2} + \varepsilon^{\mu +\min\{\mu\alpha,0\} + 1/2} + \varepsilon^{\mu + \min\{\mu\alpha,0\} + 3/2} ).
		\end{align*}
		
		Hence, we complete the proof of the theorem.
	\end{proof}
	
	When either $\alpha<0$ or $\beta < 0$ is satisfied, the asymptotic limit of $u_{\varepsilon}$
	is close to zero. Indeed, we recall from Section \ref{sec:2} that
	\begin{equation}\label{eq:4.79}
	\left\Vert \nabla u_{\varepsilon}\right\Vert _{L^{2}\left(\Omega^{\varepsilon}\right)}^{2}+\left(\varepsilon^{\alpha}+\varepsilon^{\beta}\right)\left(\left\Vert u_{\varepsilon}\right\Vert _{L^{2}\left(\Omega^{\varepsilon}\right)}^{2}+\left\Vert u_{\varepsilon}\right\Vert _{L^{2}\left(\Gamma^{\varepsilon}\right)}^{2}\right)\le C.
	\end{equation}
	
	Thanks to the elementary Bunyakovsky--Cauchy--Schwarz inequality, we deduce from
	\eqref{eq:4.79} that
	\begin{align*}
	\varepsilon^{\frac{\alpha}{2}}\left\Vert u_{\varepsilon}\right\Vert _{L^{2}\left(\Omega^{\varepsilon}\right)}+\varepsilon^{\frac{\beta}{2}}\left\Vert u_{\varepsilon}\right\Vert _{L^{2}\left(\Gamma^{\varepsilon}\right)}\le C,\;
	\varepsilon^{\frac{\beta}{2}}\left\Vert u_{\varepsilon}\right\Vert _{L^{2}\left(\Omega^{\varepsilon}\right)}+\varepsilon^{\frac{\alpha}{2}}\left\Vert u_{\varepsilon}\right\Vert _{L^{2}\left(\Gamma^{\varepsilon}\right)}\le C.
	\end{align*}
	
	Henceforth, the theorems for these cases can be stated as in Theorem \ref{maintheo_3}.
	
	When $\alpha > 0$ and $0 \leq \beta < 1$, we proceed as in Subsection \ref{section4.1.3}: for any $f_1, f_2 \in H^1(\Omega^\varepsilon) \cap L^\infty(\Omega^\varepsilon)$ with $0 <\underline{c} \leq f_1, f_2\leq \bar{c}< \infty$, assume that we can find a function $f_3 \in L^\infty(\Omega^\varepsilon)$ such that \eqref{eq:test1} holds.
	Then we are led to the estimates \eqref{eq:test2} and \eqref{eq:apchot}. Recalling the weak formulation of $(P_\varepsilon)$, which reads as
	\begin{align*}
	\int_{\Omega^\varepsilon}\mathbf{A}_\varepsilon(x) \nabla u_\varepsilon \cdot \nabla \varphi dx + \varepsilon^\beta \int_{\Gamma^\varepsilon}\mathcal{S}(u_\varepsilon)\varphi d\mathcal{S}_\varepsilon + \varepsilon^\alpha\int_{\Omega^\varepsilon}\mathcal{R}(u_\varepsilon)\varphi dx = \int_{\Omega^\varepsilon}f\varphi dx \nonumber
	\end{align*}
	for any $ \varphi \in H^1(\Omega^\varepsilon)$, by choosing $\varphi = u_\varepsilon$ and $(\text{A}_1)$--$(\text{A}_4)$ one can get
	\begin{align}\label{lastcase2}
	\varepsilon\int_{\Gamma^\varepsilon}\mathcal{S}(u_\varepsilon)u_\varepsilon d\mathcal{S}_\varepsilon  \leq \varepsilon^{1-\beta}(\bar{\gamma}\|u_\varepsilon\|_{H^1(\Omega^\varepsilon)}^2 + \delta_1 \varepsilon^\alpha\|u_\varepsilon\|_{L^2(\Omega^\varepsilon)}^2 + \|f\|_{L^2(\Omega^\varepsilon)}\|u_\varepsilon\|_{L^2(\Omega^\varepsilon)}).
	\end{align}
	
	Combining \eqref{eq:apchot} and \eqref{lastcase2}, we obtain 
	\begin{align}\label{final3}
	\|u_\varepsilon\|_{L^2(\Omega^\varepsilon)}^2 \leq C(\varepsilon^{1-\beta}(1 + \varepsilon^\alpha) + \varepsilon).
	\end{align}
	
	Applying the trace inequality \eqref{traceinq} to \eqref{final3}, we can finalize the corrector results for $(P_\varepsilon)$ by the following theorem.
	\begin{theorem}\label{final_thm}
		Assume $(\text{A}_1)$--$(\text{A}_3)$ hold. Suppose that $f \in L^2(\Omega^\varepsilon)$ and $\alpha > 0, 0 \leq \beta <1$. Let $u_\varepsilon$ be a unique solution in $\widetilde{\mathcal{W}}_\varepsilon$ of the problem $(P_\varepsilon)$. Then, the following estimate holds:
		\begin{align}
		\|u_\varepsilon\|_{L^2(\Omega^\varepsilon)} + \sqrt{\varepsilon}\|u_\varepsilon\|_{L^2(\Gamma^\varepsilon)} \leq C\left(\varepsilon^{ \frac{1- \beta}{2}}(1 + \varepsilon^{\frac{\alpha}{2}}) + \varepsilon^\frac{1}{2} \right).
		\end{align}
	\end{theorem}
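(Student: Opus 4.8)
The plan is to combine the global \emph{a priori} bound coming from the linearization scheme with the structural hypothesis \eqref{eq:test1}, which trades the volume $L^2$-norm of $u_\varepsilon$ against an $\varepsilon$-weighted surface integral of $\mathcal{S}(u_\varepsilon)$. First I would record from Theorem \ref{mainthm-2} (equivalently \eqref{eq:4.79}) that $\|\nabla u_\varepsilon\|_{L^2(\Omega^\varepsilon)}^2 \le C$, so that the uniform Poincar\'e inequality on $V^\varepsilon$ yields $\|u_\varepsilon\|_{H^1(\Omega^\varepsilon)} \le C$ independently of $\varepsilon$. This boundedness by itself carries no rate of decay, and the weight $\varepsilon^\alpha + \varepsilon^\beta$ in \eqref{eq:4.79} only degrades the conclusion further; the actual refinement must therefore come from \eqref{eq:test1}.

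Next I would test the weak formulation of $(P_\varepsilon)$ with $\varphi = u_\varepsilon$, multiply the resulting identity by $\varepsilon^{1-\beta}$, and move the diffusion term and the volume-reaction term to the right-hand side. Discarding the nonnegative diffusion term via the ellipticity $(\text{A}_1)$ and controlling $\varepsilon^\alpha|\int_{\Omega^\varepsilon}\mathcal{R}(u_\varepsilon)u_\varepsilon\,dx|$ through the bound $\mathcal{R}' \le \delta_1$ in $(\text{A}_3)$, this produces exactly \eqref{lastcase2}, namely
\begin{align*}
\varepsilon\int_{\Gamma^\varepsilon}\mathcal{S}(u_\varepsilon)u_\varepsilon\,d\mathcal{S}_\varepsilon \le \varepsilon^{1-\beta}\Big(\overline{\gamma}\|u_\varepsilon\|_{H^1(\Omega^\varepsilon)}^2 + \delta_1\varepsilon^\alpha\|u_\varepsilon\|_{L^2(\Omega^\varepsilon)}^2 + \|f\|_{L^2(\Omega^\varepsilon)}\|u_\varepsilon\|_{L^2(\Omega^\varepsilon)}\Big).
\end{align*}
In parallel, choosing admissible weights $f_1, f_2$ (for instance $f_2 \equiv 1$) in \eqref{eq:test1} and applying \eqref{eq:test2} with $\varphi = u_\varepsilon$ gives the decomposition \eqref{eq:apchot}; since $f_1$ is bounded below away from zero, this controls $\|u_\varepsilon\|_{L^2(\Omega^\varepsilon)}^2$ by $\varepsilon\int_{\Gamma^\varepsilon}f_2\mathcal{S}(u_\varepsilon)u_\varepsilon\,d\mathcal{S}_\varepsilon$ plus a remainder of order $\varepsilon\big(\|u_\varepsilon\|_{H^1(\Omega^\varepsilon)} + \|f_3\|_{L^\infty(\Omega^\varepsilon)}\big)$.

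I would then substitute \eqref{lastcase2} into the bound coming from \eqref{eq:apchot}, insert $\|u_\varepsilon\|_{H^1(\Omega^\varepsilon)} \le C$, and deal with the two self-referential terms that reappear on the right-hand side — namely $\varepsilon^{1-\beta+\alpha}\|u_\varepsilon\|_{L^2(\Omega^\varepsilon)}^2$ and $\varepsilon^{1-\beta}\|f\|_{L^2(\Omega^\varepsilon)}\|u_\varepsilon\|_{L^2(\Omega^\varepsilon)}$ — by absorption: because $\beta < 1$, both carry a coefficient tending to $0$, so Young's inequality moves them to the left-hand side and leaves $\|u_\varepsilon\|_{L^2(\Omega^\varepsilon)}^2 \le C\big(\varepsilon^{1-\beta}(1+\varepsilon^\alpha) + \varepsilon\big)$, which is \eqref{final3}. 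Finally, feeding \eqref{final3} together with $\|\nabla u_\varepsilon\|_{L^2(\Omega^\varepsilon)}^2 \le C$ into the hypersurface trace inequality \eqref{traceinq} bounds $\varepsilon\|u_\varepsilon\|_{L^2(\Gamma^\varepsilon)}^2$ by $C\big(\varepsilon^{1-\beta}(1+\varepsilon^\alpha) + \varepsilon + \varepsilon^2\big)$, and taking square roots (using $\sqrt{a+b} \le \sqrt a + \sqrt b$ and $\varepsilon \le \varepsilon^{1/2}$ for $\varepsilon < 1$) gives the asserted estimate for $\|u_\varepsilon\|_{L^2(\Omega^\varepsilon)} + \sqrt{\varepsilon}\|u_\varepsilon\|_{L^2(\Gamma^\varepsilon)}$.

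The main obstacle is the circular dependence on $\|u_\varepsilon\|_{L^2(\Omega^\varepsilon)}$ generated when combining \eqref{eq:apchot} with \eqref{lastcase2}: one must track precisely which powers of $\varepsilon$ multiply $\|u_\varepsilon\|_{L^2(\Omega^\varepsilon)}$ versus $\|u_\varepsilon\|_{L^2(\Omega^\varepsilon)}^2$, and it is exactly the restriction $0 \le \beta < 1$ — so that $\varepsilon^{1-\beta} \to 0$ — that makes the absorption legitimate. The hypothesis \eqref{eq:test1} is indispensable here, since without it the volume norm of $u_\varepsilon$ cannot be tied to the $\varepsilon$-weighted surface term at all, and the argument would only reproduce the $\varepsilon$-independent bound from \eqref{eq:4.79}.
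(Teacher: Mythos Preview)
Your proposal is correct and follows essentially the same route as the paper: invoke the uniform $H^1$ bound from \eqref{eq:4.79}, test $(P_\varepsilon)$ with $\varphi=u_\varepsilon$ and rescale by $\varepsilon^{1-\beta}$ to obtain \eqref{lastcase2}, combine this with the decomposition \eqref{eq:apchot} coming from the structural hypothesis \eqref{eq:test1}--\eqref{eq:test2} to reach \eqref{final3}, and finish via the trace inequality \eqref{traceinq}. Your absorption discussion is slightly more explicit than the paper's (which simply inserts the a priori bound $\|u_\varepsilon\|_{H^1(\Omega^\varepsilon)}\le C$ into the right-hand side), but the argument is the same, and your observation that \eqref{eq:test1} is indispensable matches the paper's own reliance on it in the paragraph preceding the theorem.
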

	\subsection{A numerical example}\label{sec:4.3}
	
	\begin{figure}[h]
		\centering
		\begin{subfigure}[b]{0.3\textwidth}
			\includegraphics[scale=0.25]{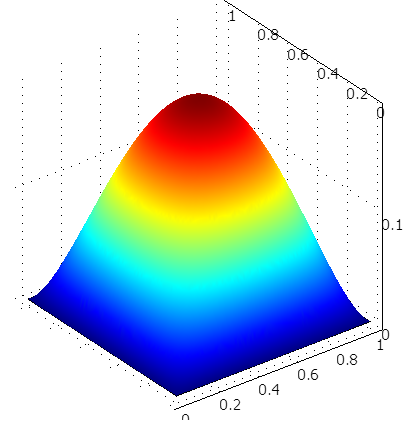}\includegraphics[scale=0.25]{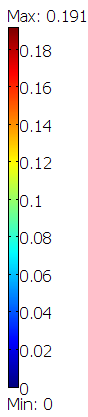}
			\caption{$\tilde{u}_0$}
			\label{Eu0}
		\end{subfigure}
		~$\qquad$
		\begin{subfigure}[b]{0.3\textwidth}
			\includegraphics[scale=0.18]{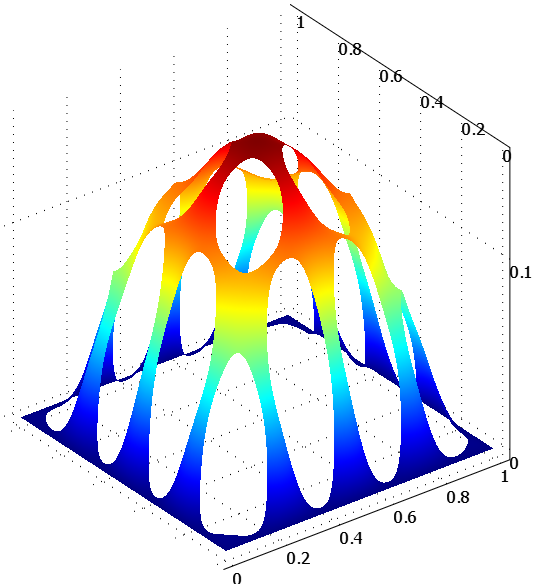}\includegraphics[scale=0.18]{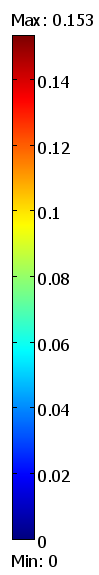}
			\caption{$\varepsilon=0.25$}
			\label{E025}
		\end{subfigure}
		\\
		\begin{subfigure}[b]{0.3\textwidth}
			\includegraphics[scale=0.25]{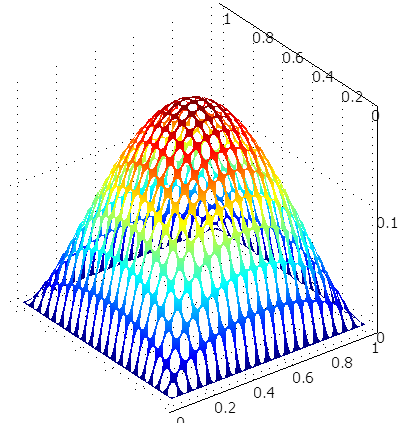}\includegraphics[scale=0.25]{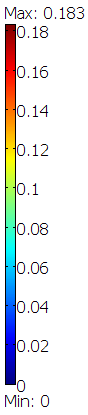}
			\caption{$\varepsilon=0.05$}
			\label{E005}
		\end{subfigure}
		~ $\qquad$
		\begin{subfigure}[b]{0.3\textwidth}
			\includegraphics[scale=0.25]{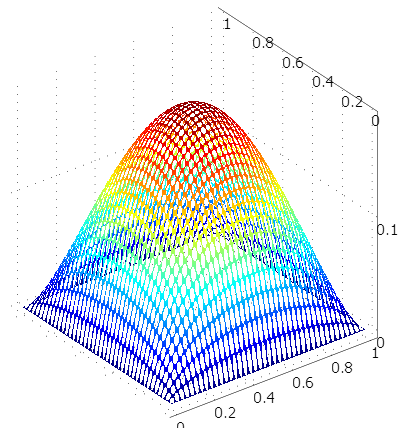}\includegraphics[scale=0.25]{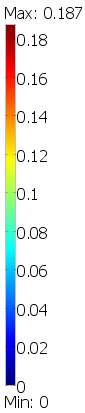}
			\caption{$\varepsilon=0.025$}
			\label{E0025}
		\end{subfigure}
		\caption{Comparison between the homogenized solution and the microscopic solution for $\varepsilon\in \left\{0.25,0.05,0.025\right\}$.}
		\label{compare1}
	\end{figure} 
	Here, we illustrate the asymptotic behaviors of the microscopic problem for different values of the scaling factors $\alpha,\beta\in\mathbb{R}$. For simplicity, we consider $(P_{\varepsilon})$ in two dimensions with the linear mappings $\mathcal{R},\mathcal{S}$ of the form $\mathcal{R}(z)=C_{1}z, \mathcal{S}(z)=C_{2}z$ for $C_1>0$ and $C_{2}\ge 0$. Taking $C_{1}=1$, we arrive at a modified Helmholtz-type equation. We choose the unit square $\Omega = (0,1)\times (0,1)$ the domain of interest and the oscillatory diffusion as
	\begin{align*}
	\mathbf{A}(x/\varepsilon) = \frac{1}{2 + \cos\left(\frac{2\pi x_{1}}{\varepsilon}\right)\cos\left(\frac{2\pi x_{2}}{\varepsilon}\right)}.
	\end{align*}
	Moreover, we consider the unit cell $Y=(0,1)\times (0,1)$ with a reference circular hole of radius $r = 0.4$ and take the volumetric source $f = 1$ and define the volume porosity as $\left|Y_{l}\right|=1 - \pi r^2 \approx 0.497$. According to \eqref{chi001}, the effective diffusion coefficient is computed as 
	\[
	\bar{\mathbf{A}}=\begin{pmatrix}0.191613 & 2.025\times10^{-9}\\
	2.025\times10^{-9} & 0.191613
	\end{pmatrix}.
	\]
	
	\begin{figure}[h]
		\centering
		\begin{subfigure}[b]{0.4\textwidth}
			\includegraphics[scale=0.424]{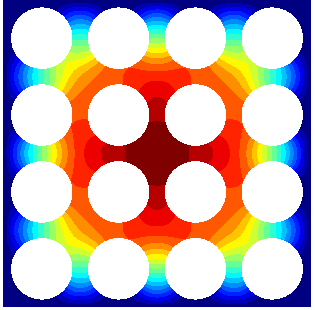}\includegraphics[scale=0.4]{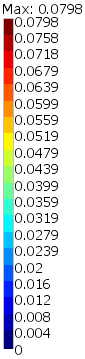}
			\caption{$\alpha=-1$ and $\beta=1$.}
		\end{subfigure}
		\hspace{0.48cm}
		\begin{subfigure}[b]{0.48\textwidth}
			\includegraphics[scale=0.424]{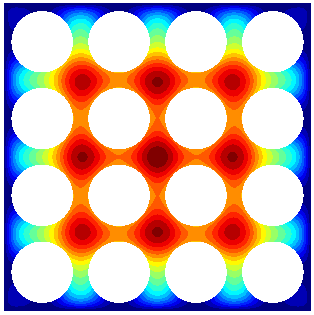}\includegraphics[scale=0.4]{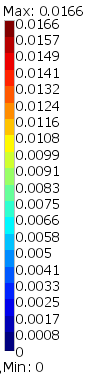}
			\caption{$\alpha=1$ and $\beta=-1$.}
		\end{subfigure}
		\\
		\begin{subfigure}[b]{0.4\textwidth}
			\includegraphics[scale=0.424]{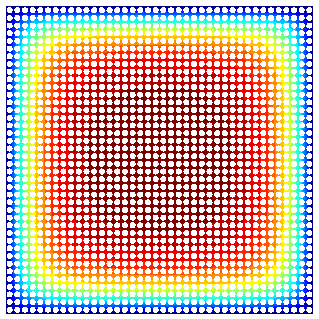}\includegraphics[scale=0.4]{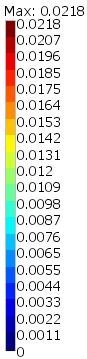}
			\caption{$\alpha=-1$ and $\beta=1$.}
		\end{subfigure}
		\hspace{0.55cm}
		\begin{subfigure}[b]{0.48\textwidth}
			\includegraphics[scale=0.425]{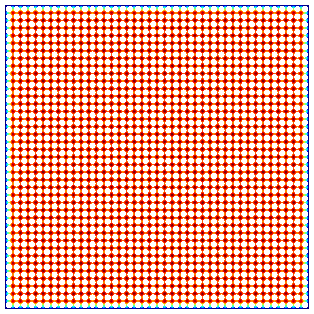}\includegraphics[scale=0.4]{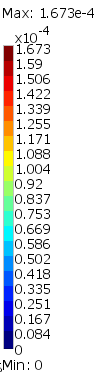}
			\caption{$\alpha=1$ and $\beta=-1$.}
		\end{subfigure}
		\caption{Behavior of the microscopic solution $u_{\varepsilon}$ for the sub-cases  $\alpha = -1, \beta = 1$ and $\alpha = 1, \beta = -1$ at $\varepsilon=0.25$ (top) and $\varepsilon=0.025$ (bottom).}
		\label{caseII}
	\end{figure}
	
	\begin{figure}[h]
		\includegraphics[scale = 0.28]{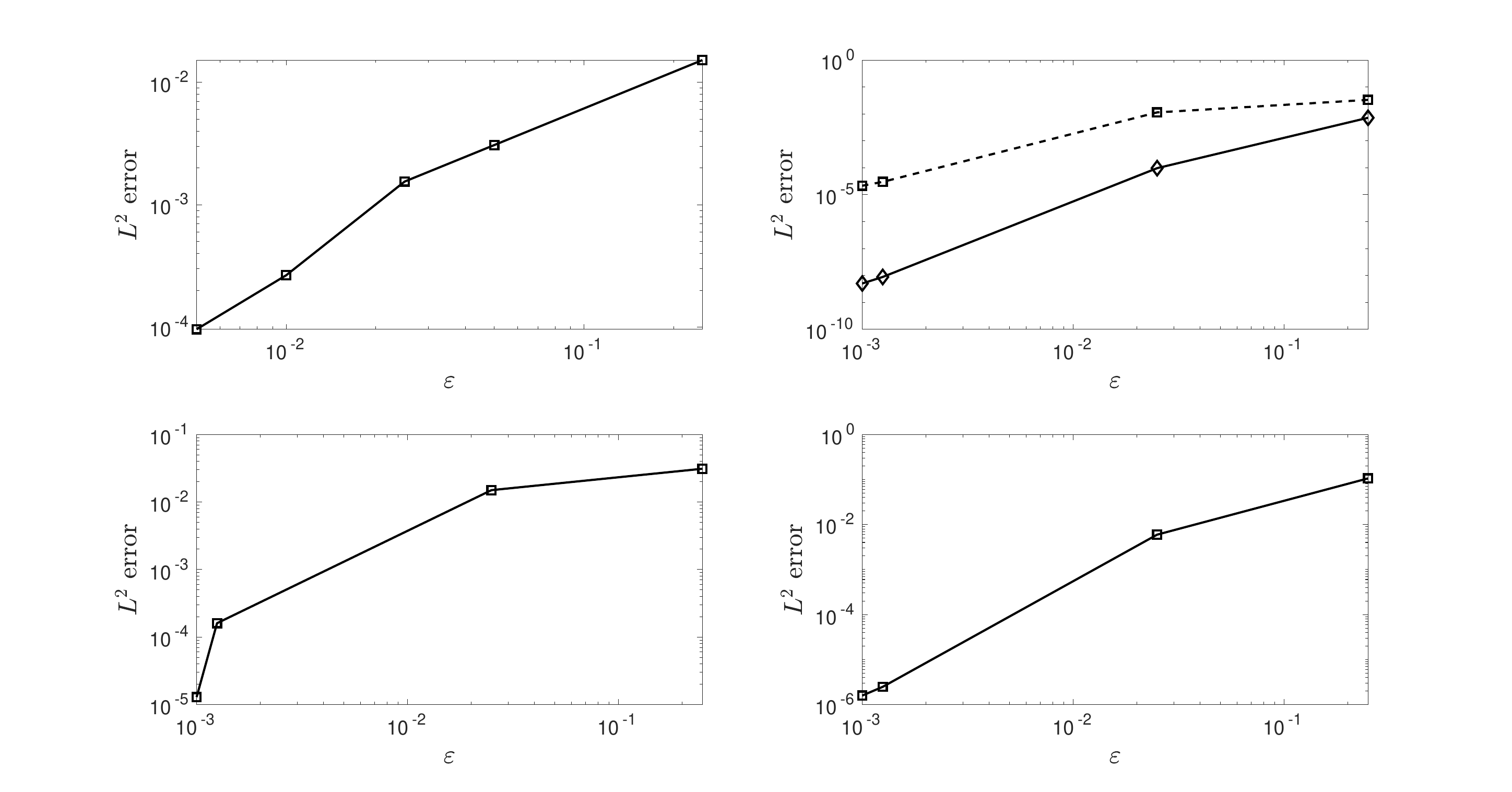}
		\caption{\small Convergence results in the $\ell^{2}$-norm of $u_{\varepsilon}$ in the microscopic domain for
			various combinations of the parameters $\alpha,\beta$ and choices of $\varepsilon$. {\em First panel:} $\alpha=1,\beta=2$. {\em Second panel:} $\alpha=-1,\beta=1$ (dashed square) and $\alpha=1,\beta=-1$ (solid diamond). {\em Third panel:} $\alpha=1,\beta=1/2$. {\em Fourth panel:} convergence at the micro-surfaces for $\alpha=-2,C_{2}=0$. }
		\label{fig:2a}
	\end{figure}

	\subsubsection*{Comments on numerical results}
	To verify our theoretical results, we divide the scale factors $\alpha$ and $\beta$ into three cases:
	\begin{enumerate}
		\item When $\alpha >0$ and $\beta > 1$, $u_{\varepsilon}$   converges to $\tilde{u}_{0}$ of the homogenized problem \eqref{limit_construct1-1}--\eqref{limit_construct2-1}.
		\item When either $\alpha < 0$ or $\beta < 0$, $u_{\varepsilon }$ converges to 0.
		\item When $\alpha >0$ and $0\le \beta <1$, $u_{\varepsilon }$ converges to 0.
	\end{enumerate}
	
	Suppose we take $C_{2}=1$. We consider the first case by fixing $\alpha = 1$ and $\beta = 2$. Here, we use the standard linear FEM with a mesh discretization, which is much more smaller than $\varepsilon$ to solve the microscopic problem for various values of $\varepsilon \in \left\{0.25,0.05,0.025,0.01,0.005\right\}$. In Figure \ref{compare1}, we compare the homogenized solution $\tilde{u}_{0}$ with the microscopic solution $u_{\varepsilon}$ at some chosen values of $\varepsilon \in \left\{0.25, 0.01, 0.025\right\}$. It can be seen that the microscopic solution converges to the homogenized solution as $\varepsilon$ tends to 0. This confirms the usual performance of our homogenization procedure. Based on Table \ref{table:1a}, it can also be seen that the homogenized solution $\tilde{u}_{0}$ is the excellent approximation candidate when $\varepsilon$ gets smaller and smaller.
	
	For the second case, we verify the sub-cases  $\alpha = -1, \beta = 1$ and $\alpha = 1, \beta = -1$, respectively. As depicted in Figure \ref{caseII}, we find that $u_{\varepsilon}$ converges to 0 as $\varepsilon \searrow 0^{+}$, which agrees with  Theorem \ref{maintheo_3}. Moreover, we have tabulated in Table \ref{table:1b} the smallness of the microscopic solution in $\ell^2$-norm of these cases at various  $\varepsilon\in\left\{0.25,0.025,0.00125,0.001\right\}$. 
	In the same spirit, choosing $\alpha = 1$ and $\beta = 1/2$ the convergence to 0 of $u_{\varepsilon}$ is guaranteed by  Theorem \ref{final_thm} and this is verified by the numerical results tabulated in Table \ref{table:1c}.
	
	It is worth mentioning that we can also corroborate the case $\alpha < - 1$ discussed in Remark \ref{remarkhay} where $u_{\varepsilon}$ converges to 0 at the micro-surfaces. Indeed, taking $C_2 = 0$ and $\alpha = -2$ we obtain the numerical results in Table \ref{table3}, which are consistent with Theorem \ref{maintheo_3}.
	
		The convergence rates in Tables \ref{table:1} and \ref{table3} are also depicted in Figure \ref{fig:2a}, where we show log-log plots of the numerical errors.
	
	\begin{table}
		\centering
		\begin{subtable}[t]{5in}
			\centering
			\begin{tabular}{|c|c|c|c|c|c|}
				\hline 
				$\varepsilon$ & 0.25 & 0.05 & 0.025 & 0.01 & 0.005\tabularnewline
				\hline 
				$\left\Vert u_{\varepsilon}-\tilde{u}_{0}\right\Vert _{\ell^{2}}$ & 0.015219 & 0.003079 & 0.001550 & 0.000266 & 9.623$\times10^{-5}$\tabularnewline
				\hline 
			\end{tabular}
			\caption{Case 1 ($\alpha=1,\beta=2$)}\label{table:1a}
		\end{subtable}
		\\
		\begin{subtable}[t]{4in}
			\centering
			\begin{tabular}{|c|c|c|c|c|}
				\hline 
				$\varepsilon$ & 0.25 & 0.025 & 0.00125 & 0.001\tabularnewline
				\hline 
				\multicolumn{5}{|c|}{$\alpha=-1,\beta=1$}\tabularnewline
				\hline 
				$\left\Vert u_{\varepsilon}\right\Vert _{\ell^{2}}$ & 0.0333 & 0.0114 & 2.9938$\times10^{-5}$ & 2.1528$\times10^{-5}$\tabularnewline
				\hline 
				\multicolumn{5}{|c|}{$\alpha=1,\beta=-1$}\tabularnewline
				\hline 
				$\left\Vert u_{\varepsilon}\right\Vert _{\ell^{2}}$ & 0.0072 & 9.6755$\times10^{-5}$ & 8.789$\times10^{-9}$ & 5.0318$\times10^{-9}$\tabularnewline
				\hline 
			\end{tabular}
			\caption{Case 2 ($\alpha=-1,\beta=1$ and $\alpha=1,\beta=-1$)}\label{table:1b}
		\end{subtable}
		\\
		\begin{subtable}[t]{4in}
			\centering
			\begin{tabular}{|c|c|c|c|c|}
				\hline 
				$\varepsilon$ & 0.25 & 0.025 & 0.00125 & 0.001\tabularnewline
				\hline 
				$\left\Vert u_{\varepsilon}\right\Vert _{\ell^{2}}$ & 0.0311 & 0.0150 & 1.6029$\times10^{-4}$ & 1.2887$\times10^{-5}$\tabularnewline
				\hline 
			\end{tabular}
			\caption{Case 3 ($\alpha=1,\beta=1/2$)}\label{table:1c}
		\end{subtable}
		\caption{Numerical results in the $\ell^{2}$-norm of $u_{\varepsilon}$ in the microscopic domain for
			various combinations of the parameters $\alpha,\beta$ and choices of $\varepsilon$.}\label{table:1}
	\end{table}
	
	
	\begin{table}
		\noindent \begin{centering}
			\begin{tabular}{|c|c|c|c|c|}
				\hline 
				$\varepsilon$ & 0.25 & 0.025 & 0.00125 & 0.001\tabularnewline
				\hline 
				$\left\Vert u_{\varepsilon}\right\Vert _{\ell^{2}\left(\Gamma^{\varepsilon}\right)}$ & 0.10589 & 0.00596 & 2.474$\times10^{-6}$ & 1.584$\times10^{-6}$\tabularnewline
				\hline 
			\end{tabular}
			\par\end{centering}
		\caption{Numerical results in the $\ell^{2}$-norm of $u_{\varepsilon}$ at the micro-surfaces for $\alpha=-2,C_{2}=0$. }\label{table3}
	\end{table}

	\subsection*{Acknowledgement}
	V.A.K acknowledges Mai Thanh Nhat Truong (Dongguk University,  Republic of Korea) for fruitful discussions on the graphical structures of porous media. V.A.K would like to thank Prof. Adrian Muntean (Karlstad University, Sweden) for the whole-hearted instruction in his PhD period at GSSI, Italy. T.K.T.T is truly grateful to Prof. Adrian Muntean for the supervision during the Ph.D. training at GSSI and Karlstad University.
	\begin{center}
		\bibliographystyle{plain}
		\bibliography{mybib}
	\end{center}

	\medskip
	\medskip
	
\end{document}